\numberwithin{equation}{section}
\newtheorem{Theorem}{Theorem}[section]
\newtheorem{Lemma}[Theorem]{Lemma}
\newtheorem{Proposition}[Theorem]{Proposition}
 { \theoremstyle{definition}
\newtheorem{Definition}[Theorem]{Definition}

\newtheorem{Example}[Theorem]{Example}
\newtheorem{Remark}[Theorem]{Remark} }
 \DeclareMathOperator{\Tr}{Tr}
 \DeclareMathOperator{\tr}{tr}
\DeclareMathOperator{\spec}{spec}
\DeclareMathOperator{\AS}{AS}
\DeclareMathOperator{\APS}{APS}
\DeclareMathOperator{\Id}{Id}
\DeclareMathOperator{\ind}{index}
\DeclareMathOperator{\grad}{grad}
\DeclareMathOperator{\End}{End}
\DeclareMathOperator{\sgn}{sgn}
\DeclareMathOperator{\reg}{reg}
\DeclareMathOperator{\LIM}{LIM}
\DeclareMathOperator{\Imag}{Im}
\DeclareMathOperator{\Spin}{Spin}
 \DeclareMathOperator{\supp}{supp}
\DeclareMathOperator{\vol}{vol}
\newcommand{\Spinc}{\Spin^c}
 \newcommand{\R}{\mathbb{R}}
 \newcommand{\C}{\mathbb{C}}
 \newcommand{\N}{\mathbb{N}}
 \newcommand{\Z}{\mathbb{Z}}
\newcommand{\XX}{\mathfrak{X}}
\newcommand{\cF}{\mathcal{F}}
\begin{document}
\allowdisplaybreaks

\newcommand{\arXivNumber}{2110.00390}

\renewcommand{\PaperNumber}{023}

\FirstPageHeading

\ShortArticleName{Spectral Asymmetry and Index Theory on Manifolds with Generalised Hyperbolic Cusps}

\ArticleName{Spectral Asymmetry and Index Theory\\ on Manifolds with Generalised Hyperbolic Cusps}

\Author{Peter HOCHS~$^{\rm a}$ and Hang WANG~$^{\rm b}$}

\AuthorNameForHeading{P.~Hochs and H.~Wang}

\Address{$^{\rm a)}$~Institute for Mathematics, Astrophysics and Particle Physics, Radboud University,\\
\hphantom{$^{\rm a)}$}~PO Box 9010, 6500 GL Nijmegen, The Netherlands}
\EmailD{\href{mailto:p.hochs@math.ru.nl}{p.hochs@math.ru.nl}}

\Address{$^{\rm b)}$~School of Mathematical Sciences, East China Normal University,\\
\hphantom{$^{\rm b)}$}~No.~500, Dong Chuan Road, Shanghai 200241, P.R.~China}
\EmailD{\href{mailto:wanghang@math.ecnu.edu.cn}{wanghang@math.ecnu.edu.cn}}

\ArticleDates{Received June 22, 2022, in final form March 28, 2023; Published online April 20, 2023}

\Abstract{We consider a complete Riemannian manifold, which consists of a compact interior and one or more $\varphi$-cusps: infinitely long ends of a type that includes cylindrical ends and hyperbolic cusps. Here $\varphi$ is a function of the radial coordinate that describes the shape of such an end. Given an action by a compact Lie group on such a manifold, we obtain an equivariant index theorem for Dirac operators, under conditions on $\varphi$. These conditions hold in the cases of cylindrical ends and hyperbolic cusps. In the case of cylindrical ends, the cusp contribution equals the delocalised $\eta$-invariant, and the index theorem reduces to Donnelly's equivariant index theory on compact manifolds with boundary. In general, we find that the cusp contribution is zero if the spectrum of the relevant Dirac operator on a~hypersurface is symmetric around zero.}

\Keywords{equivariant index; Dirac operator; noncompact manifold; cusp}

\Classification{58J20; 58D19}

\section{Introduction}

\subsection*{Index theory on noncompact manifolds}

Index theory on noncompact, complete manifolds comes up naturally in several different contexts. Well-known results include:
\begin{enumerate}\itemsep=0pt
\item[(1)] the Gromov--Lawson relative index theorem \cite{Gromov83} for differences of Dirac operators that are invertible, and equal, outside compact sets;
\item[(2)] the Atiyah--Patodi--Singer (APS) index theorem \cite{APS1, Donnelly} on compact manifolds with boundary, where an approach is to attach a cylindrical end to the boundary to obtain a complete manifold without boundary; and
\item[(3)] index theorems on noncompact locally symmetric spaces, and manifolds with cusps modelled on such spaces. Some important results include \cite{BM, Moscovici82, Muller83, Muller87, Stern89}.
\end{enumerate}
Other important areas, which are not considered in this paper, are
index theory of Callias-type operators \cite{Anghel89, Anghel93, Bott78, Bunke95, Callias78, Kucerovsky01}, index theory where a group action is used to define an equivariant index of operators that are not Fredholm in the traditional sense, see, e.g., \cite{Atiyah74, Braverman02}, and index theory with values in the $K$-theory of $C^*$-algebras, see, e.g., \cite{Connes94}.

In this paper and in~\cite{HW21a}, we work towards a common framework for studying the three types of index problems mentioned. In \cite{HW21a}, we considered a complete Riemannian manifold $M$, a~Clifford module $S \to M$ and a Dirac operator $D$ on $\Gamma^{\infty}(S)$ that is ``invertible at infinity'' in the following sense. We assumed that there are a compact subset $Z \subset M$ and a $b>0$ such that for all $s \in \Gamma_c^{\infty}(S)$ supported outside $Z$,
\[
\|Ds\|_{L^2} \geq b \|s\|_{L^2}.
\]
Then $D$ is Fredholm as an unbounded operator on $L^2(S)$ with a suitable domain \cite{Anghel93b, Gromov83}.

We assumed that $M$ has a warped product structure outside $Z$ (the $\varphi$-cusps as below, without assumptions on the function $\varphi$).
Furthermore, we considered an action by a compact group $G$ on $M$ and $S$, commuting with $D$, and a group element $g \in G$. The main result in \cite{HW21a} is an expression for the value at $g$ of the equivariant index of such an operator, as an Atiyah--Segal--Singer-type contribution from $Z$ and a contribution from outside $Z$. This implies an equivariant version of the second index theorem mentioned at the start, and an equivariant version of the first for manifolds with the appropriate warped product form at infinity.

In this paper, we give an expression for the contribution from outside $Z$ for manifolds with specified shapes outside $Z$, including cylindrical ends and hyperbolic cusps.

\subsection*{$\boldsymbol\varphi$-cusps}

More specifically, let $M$ be a complete Riemannian manifold. Suppose a compact Lie group $G$ acts isometrically on $M$, that $S \to M$ is a $G$-equivariant Clifford module, and that $D$ is a $G$-equivariant Dirac operator on sections of $S$.
Let $a>0$, and let $\varphi \in C^{\infty}(a, \infty)$. We assume that there is a compact, $G$-invariant subset $Z \subset M$ with smooth boundary $N$, such that $C := M \setminus Z$ is $G$-equivariantly isometric to the product $N \times (a, \infty)$, with the Riemannian metric
\begin{equation*} 
{\rm e}^{2\varphi}\big(B_N + {\rm d}x^2\big),
\end{equation*}
where $B_N$ is a $G$-invariant Riemannian metric on $N$, and $x$ is the coordinate in $(a, \infty)$. Then we say that $M$ has \emph{$\varphi$-cusps}. (The results in this paper extend to cases where different functions~$\varphi$ are used on different connected components of $N$.)

A natural form of a Dirac operator on $C$ is
\[
{\rm e}^{-\varphi} c_0\bigg(\frac{\partial}{\partial x} \bigg)\bigg(\frac{\partial}{\partial x}+D_N +\frac{\dim(M)-1}{2}\varphi' \bigg)
\]
for a Dirac operator $D_N$ on $S|_N$, where $c_0$ is the Clifford action for the product metric
$B_N + {\rm d}x^2$ on $C$. We assume $D|_C$ has this form, and, initially, that $D_N$ is invertible. Then $D_N^2 \geq b^2$ for some $b>0$.

We say that $M$ has
\emph{weakly admissible $\varphi$-cusps} if
\begin{enumerate}\itemsep=0pt
\item[(1)] $\varphi$ is bounded above, and
\item[(2)] there is an $\alpha>0$ such that $|\varphi'(x)| \leq b - \alpha$ for large enough $x$,
\end{enumerate}
and
\emph{strongly admissible $\varphi$-cusps} if
\begin{enumerate}\itemsep=0pt
\item[(1)] $\lim_{x \to \infty} \varphi(x) =-\infty$, and
\item[(2)] $\lim_{x \to \infty} \varphi'(x) =0$.
\end{enumerate}
For example, if $\varphi(x) = -\mu \log(x)$ for $\mu \in \R$, then
\begin{itemize}\itemsep=0pt
\item $M$ is complete if and only if $\mu \leq 1$,
\item $M$ has weakly admissible $\varphi$-cusps if and only if $\mu \geq 0$, and
\item $M$ has strongly admissible $\varphi$-cusps if and only if $\mu > 0$.
\end{itemize}
Furthermore, $M$ has finite volume if and only if $\mu>1/\dim(M)$, but this is not directly relevant to us here.

\subsection*{Results}

If $M$ has weakly admissible $\varphi$-cusps, then our main result, Theorem~\ref{thm index cusps} states that $D$ is Fredholm, and the value of its equivariant index at $g \in G$ is
\begin{equation} \label{eq index intro}
\ind_G(D)(g) = \int_{Z^g} \AS_g(D) - \frac{1}{2}\eta_g^{\varphi}\big(D_N^+\big).
\end{equation}
Here
\begin{itemize}\itemsep=0pt
\item $Z^g$ is the fixed point set of $g$ in $Z$,
\item $\AS_g(D)$ is the Atiyah--Segal--Singer integrand for $D$, and
\item $\eta_g^{\varphi}\big(D_N^+\big)$ is the \emph{$\varphi$-cusp contribution} associated to $D_N^+$ (the restriction of $D_N$ to even-graded sections).
\end{itemize}
The cusp contribution $\eta_g^{\varphi}\big(D_N^+\big)$ equals
\begin{equation} \label{eq cusp contr intro}
\eta^{\varphi}_g\big(D_N^+\big) =
\lim_{a'\downarrow a}
 \int_{0}^{\infty}
\sum_{\lambda \in \spec(D_N^+)}\sgn(\lambda) \tr(g|_{\ker(D_N^+ - \lambda)})
F_{\varphi}(a', s, |\lambda|) \, {\rm d}s
\end{equation}
for a function $F_{\varphi}$ depending on $\varphi$. (Here $a$ is as in the definition of $C \cong N \times (a, \infty)$.) This function is expressed
 in terms of eigenfunctions of a Sturm--Liouville (or Schr\"odinger) operator on the half-line $(0, \infty)$, with Dirichlet boundary conditions at $0$. See Definition~\ref{def cusp cpt} for details. If $M$ has strongly admissible $\varphi$-cusps, then this operator has discrete spectrum. If $M$ only has weakly admissible $\varphi$-cusps, then it may have a continuous spectral decomposition, and its spectral measure also appears in the expression for $\eta_g^{\varphi}\big(D_N^+\big)$.

There is also a version of~\eqref{eq index intro} where the integral over $Z^g$ is replaced by an integral over $M^g$, if this converges, and the limit in~\eqref{eq cusp contr intro} is replaced by the limit $a' \to \infty$.

A possibly interesting feature of the cusp contribution $\eta_g^{\varphi}\big(D_N^+\big)$ is that it equals zero if the spectrum of $D_N^+$
has the equivariant symmetry property that
\begin{equation} \label{eq g symm spec intro}
\tr\big(g|_{\ker(D_N^+ - \lambda)}\big) = \tr\big(g|_{\ker(D_N^+ + \lambda)}\big)
\end{equation}
for all $\lambda \in \R$. If $g=e$, then this is exactly symmetry of the spectrum with respect to reflection in $0$, including multiplicities. It is immediate from~\eqref{eq cusp contr intro} that $\eta^{\varphi}_g\big(D_N^+\big)=0$ if the spectrum of $D_N^+$ has this property.
 As noted in~\cite{APS1}, the classical $\eta$-invariant also vanishes if $D_N^+$ has symmetric spectrum. So it seems that different ways of measuring spectral asymmetry are relevant to index theory on manifolds of the type we consider.

After we prove~\eqref{eq index intro}, we compute the function $F_{\varphi}$ in~\eqref{eq cusp contr intro}, and hence the cusp contribution $\eta_g^{\varphi}\big(D_N^+\big)$ in the case where $\varphi(x) = 0$. Then $C = N \times (a, \infty)$ is a cylindrical end, and Proposition~\ref{prop cusp contr cylinder} states that $\eta_g^{0}\big(D_N^+\big)$ is the equivariant $\eta$-invariant \cite{Donnelly} of~$D_N^+$. This computation is a~spectral version of the geometric computation in \cite[Section~5]{HW21a}. Then~\eqref{eq index intro} becomes Donnelly's equivariant version of the APS index theorem~\cite{Donnelly}.

The eigenfunctions of the Sturm--Liouville operator involved in the expression~\eqref{eq cusp contr intro} are known explicitly in several cases besides the cylinder case, such as $\varphi(x) = -\log(x)/2$ and the hyperbolic cusp case $\varphi(x) = -\log(x)$. Nevertheless,
 it seems to be a nontrivial problem to evaluate the cusp contribution~\eqref{eq cusp contr intro} explicitly, even when these eigenfunctions are known. Concrete consequences and special cases of the main Theorem~\ref{thm index cusps} are:
\begin{enumerate}\itemsep=0pt\samepage
\item[(1)] the Fredholm property of $D$,
\item[(2)] the fact that, for $\varphi = 0$, the cusp contribution $\eta_g^0\big(D_N^+\big)$ is the delocalised $\eta$-invariant of~$D_N^+$,
\item[(3)] the fact that, for general $\varphi$, the cusp contribution vanishes if the spectrum of $D_N^+$ has the symmetry property~\eqref{eq g symm spec intro}.
\end{enumerate}

Related index theorems were obtained for manifolds with ends of the form $N \times (a, \infty)$ with metrics of the form $B_{N, x} + {\rm d}x^2$, where now $B_{N, x}$ is a Riemannian metric on $N$ depending on $x\in (a, \infty)$. In many cases, this family of metrics on $N$ has the form $B_{N, x} = \rho^2(x)B_N$, for a fixed Riemannian metric $B_N$ on $N$ and a function $\rho$ on $(a, \infty)$. Results in this context include the ones in
 \cite{Baier02, BB03, BBC12, Stern84, Vaillant01}. We discuss the relations between these results and~\eqref{eq index intro} in Section~\ref{sec other results}.

\section{Preliminaries and result} \label{sec results}

Throughout this paper, $M$ is a $p$-dimensional Riemannian manifold with $p$ even, and $S = S^+ \oplus S^-\to M$ is a $\Z/2$-graded Hermitian vector bundle.
We denote the Riemannian density on $M$ by ${\rm d}m$. We also assume that a compact Lie group $G$ acts smoothly and isometrically on~$M$, that $S$ is a $G$-equivariant vector bundle and that the action on $G$ preserves the metric and grading on $S$. We fix, once and for all, an element $g \in G$.

\subsection[varphi-cusps]{$\boldsymbol{\varphi}$-cusps}

\begin{Definition}\label{def cusps}
The manifold $M$ has ($G$-invariant) \emph{$\varphi$-cusps} if there are
 \begin{itemize}\itemsep=0pt
 \item a $G$-invariant compact subset $Z \subset M$ with smooth boundary $N$, and
 \item a number $a\geq 0$ and a function $\varphi \in C^{\infty}(a,\infty)$,
 \end{itemize}
 such that
 \begin{itemize}\itemsep=0pt
 \item
 there is a $G$-equivariant isometry from
$C:= {M \setminus Z}$ onto the manifold $N \times (a,\infty)$ with the metric
\begin{equation} \label{eq g phi}
 B_{\varphi} := {\rm e}^{2\varphi}\big(B_N + {\rm d}x^2\big),
\end{equation}
where $B_N$ is the restriction of the Riemannian metric to $N$ and $x$ is the coordinate in $(a,\infty)$, and
\item
this isometry has a continuous extension to a map $\overline{C} \to N \times [a, \infty)$, which maps $N$ onto $N \times \{a\}$.
\end{itemize}
The manifold $M$ has \emph{strongly admissible $\varphi$-cusps} or \emph{strongly admissible cusps} if, in addition,
\begin{equation} \label{eq strong adm}
\begin{split}
\lim_{x \to \infty} \varphi(x) &= -\infty,\qquad
\lim_{x \to \infty} \varphi'(x) &= 0.
\end{split}
\end{equation}
\end{Definition}

\begin{Remark}
There is no loss of generality in assuming that $a=0$ in Definition~\ref{def cusps}. However, in examples, it may be that $\varphi$ arises as the restriction to $(a, \infty)$ of a function naturally defined on say $(0, \infty)$. Allowing nonzero $a$ then means that we do not need to shift these functions over~$a$ to obtain a function on~$(0, \infty)$. This does not matter for the results.
\end{Remark}

\begin{Remark}In Definition~\ref{def cusps}, the hypersurface $N$ may be disconnected. Let $N_1, \dots, N_k$ be its connected components. All results below generalise to the case where the Riemannian metric on $N \times (a,\infty)$ is of the form ${\rm e}^{2\varphi_j}(B_N + {\rm d}x^2)$ on $N_j \times (a, \infty)$, for $\varphi_j \in C^{\infty}(a,\infty)$ depending on~$j$. This generalisation is straightforward, and we do not work out details here.
\end{Remark}

\begin{Lemma}\label{lem phi complete}
If $M$ has $\varphi$-cusps, then it is complete if and only if
\begin{equation} \label{eq phi complete}
\int_{a+1}^{\infty} {\rm e}^{\varphi(x)}\, {\rm d}x = \infty.
\end{equation}
\end{Lemma}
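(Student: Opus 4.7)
The plan is to use the Hopf--Rinow theorem (which still applies since $M$ is a Riemannian manifold): $M$ is complete if and only if every closed metric ball in $M$ is compact. The whole proof reduces to controlling the distance to $Z$ along the cusp, so the first step I would carry out is the following geometric distance computation.

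\textbf{Step 1 (distance to $N$).} Show that for every $q = (n, x) \in C$, the distance from $q$ to $Z$ in $M$ equals
\[
d(q, Z) = \int_a^{x} {\rm e}^{\varphi(t)}\, {\rm d}t.
\]
For the upper bound, use the radial segment $s \mapsto (n, s)$ from $(n, x)$ down to $(n, a) \in N$, whose length with respect to $B_{\varphi}$ is exactly this integral. For the lower bound, let $\gamma \colon [0,1] \to M$ be any curve from $q$ to $Z$; since $Z$ is closed and $q \in C = M\setminus Z$, $\gamma$ must first hit $Z$ through $N$, so the relevant portion lies in $\overline{C} \cong N \times [a, \infty)$. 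Writing $\gamma(s) = (n(s), x(s))$ and using the warped metric,
\[
L(\gamma) \geq \int_0^1 {\rm e}^{\varphi(x(s))} |x'(s)|\, {\rm d}s \geq \left| \int_0^1 {\rm e}^{\varphi(x(s))} x'(s)\, {\rm d}s \right| = \int_a^{x} {\rm e}^{\varphi(t)}\, {\rm d}t.
\]

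\textbf{Step 2 (sufficiency).} Assume the integral in \eqref{eq phi complete} diverges. By Step~1, $d(q, Z) \to \infty$ as the $x$-coordinate of $q \in C$ tends to $\infty$. Fix $p \in M$ and $R > 0$. For $q \in \overline{B(p,R)} \cap C$, the triangle inequality gives $d(q, Z) \leq R + d(p, Z)$, so $\int_a^{x_q} {\rm e}^{\varphi} \, {\rm d}t$ is bounded; hence there is $x_{\max}$ with $x_q \leq x_{\max}$. Therefore
\[
\overline{B(p, R)} \subset Z \cup \big( N \times [a, x_{\max}] \big),
\]
the right-hand side being compact (as $N$ is compact, being the boundary of compact $Z$). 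A closed subset of a compact set is compact, so $\overline{B(p,R)}$ is compact, and Hopf--Rinow gives completeness.

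\textbf{Step 3 (necessity).} Assume the integral in \eqref{eq phi complete} is finite. Fix $n_0 \in N$ and consider the sequence $p_k := (n_0, k) \in C$ for large integers $k$. By the upper-bound part of Step~1 (applied along the radial segment between $p_j$ and $p_k$),
\[
d(p_j, p_k) \leq \int_{\min(j,k)}^{\max(j,k)} {\rm e}^{\varphi(t)}\, {\rm d}t,
\]
which tends to $0$ as $j, k \to \infty$ by the tail estimate, so $\{p_k\}$ is Cauchy. On the other hand $p_k$ leaves every compact subset of $M$ (compact subsets of $N \times [a, \infty)$ have bounded $x$-coordinate, and $Z$ is disjoint from $\{x > a\}$), so $\{p_k\}$ has no limit in $M$. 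Thus $M$ is not complete.

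The main obstacle is really only Step~1: the lower bound on the length of $\gamma$ requires the observation that any path from $C$ to $Z$ first crosses $N$, after which the reduction to the one-dimensional estimate is straightforward. Everything else is a direct application of Hopf--Rinow and the standard estimate for warped-product lengths.
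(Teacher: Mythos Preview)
Your proof is correct and follows essentially the same approach as the paper: both reduce the question to the length of radial curves $s \mapsto (n,s)$ in the cusp, using the warped-product estimate to see that the radial integral governs distances. The paper's version is a brief sketch that appeals to the criterion ``every curve to infinity has infinite length'' (with a reference to Nomizu--Ozeki), whereas you compute $d(q,Z)$ explicitly and invoke Hopf--Rinow and a Cauchy-sequence argument; your write-up is therefore more detailed and self-contained, but the underlying idea is the same.
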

\begin{proof}
For any smooth, increasing $\gamma\colon [0,1) \to [a+1,\infty)$ such that $\gamma(0) = a+1$, and any $n \in N$, consider the curve $\gamma_{n}$ in $C$ given by $\gamma_{n}(t) = (n, \gamma(t))$. Then the length of $\gamma_{n}$ is
\[
\sgn(\gamma') \int_0^1 \gamma'(s) {\rm e}^{\varphi(\gamma(t))}\, {\rm d}t = \sgn(\gamma') \int_{a+1}^{\gamma(1)} {\rm e}^{\varphi(x)}\, {\rm d}x.
\]
Here $\gamma(1):= \lim_{ t\to 1}\gamma(t)$ exists because $\gamma$ is increasing. The lengths of all such curves with $\gamma(1)= \infty$ are infinite if and only if~\eqref{eq phi complete} holds. This implies that~\eqref{eq phi complete} is equivalent to the condition that any curve in $C$ that goes to infinity has infinite length.
Compare also the proof of Theorem 1 in \cite{Nomizu61}.
\end{proof}

From now on, suppose that $M$ is complete and has {$\varphi$-cusps}, and let $Z$, $N$, $a$, $\varphi$ and $B_N$ be as in Definition~\ref{def cusps}.

It will not directly be important for our results if $M$ has finite or infinite volume. But because of the relevance of finite-volume manifolds, we note the following fact. Recall that $p$ is the dimension of $M$.
\begin{Lemma}\label{lem fin vol}
The manifold $M$ has finite volume if and only if
\[
\int_a^{\infty} {\rm e}^{p\varphi(x)}\, {\rm d}x < \infty.
\]
\end{Lemma}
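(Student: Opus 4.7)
The plan is to reduce the total volume computation to an explicit integral over the cusp $C$, using that $Z$ is compact and hence automatically of finite volume.

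First I would write $\vol(M) = \vol(Z) + \vol(C)$ and note that since $Z$ is compact it has finite volume, so the finiteness of $\vol(M)$ is equivalent to the finiteness of $\vol(C)$. Next I would compute the Riemannian density of the warped product metric $B_\varphi = e^{2\varphi}(B_N + {\rm d}x^2)$ on $C \cong N \times (a,\infty)$. Since conformally rescaling a metric on a $p$-dimensional manifold by $e^{2\varphi}$ multiplies the volume form by $e^{p\varphi}$, the density of $B_\varphi$ is
\[
{\rm d}m\big|_C = e^{p\varphi(x)}\, {\rm d}n \, {\rm d}x,
\]
where ${\rm d}n$ is the Riemannian density of $B_N$ on $N$. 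By Fubini,
\[
\vol(C) = \int_a^\infty \int_N e^{p\varphi(x)}\, {\rm d}n\, {\rm d}x = \vol_{B_N}(N) \int_a^\infty e^{p\varphi(x)}\, {\rm d}x.
\]
Since $N$ is the smooth boundary of the compact set $Z$, it is compact, so $\vol_{B_N}(N)$ is finite and positive. Hence $\vol(C) < \infty$ if and only if $\int_a^\infty e^{p\varphi(x)}\,{\rm d}x < \infty$, which proves the lemma.

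There is no real obstacle here; the only point that deserves care is the factor $e^{p\varphi}$ in the density, which uses that the conformal factor $e^{2\varphi}$ multiplies the full $p$-dimensional metric (not just the $N$-factor), so that the determinant of the metric in local coordinates is scaled by $e^{2p\varphi}$ and its square root by $e^{p\varphi}$.
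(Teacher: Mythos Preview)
Your proof is correct and follows essentially the same approach as the paper: the paper's proof simply records the density formula $\vol_{B_\varphi} = e^{p\varphi}\,\vol_{B_N}\otimes {\rm d}x$ and leaves the splitting $\vol(M)=\vol(Z)+\vol(C)$, the Fubini computation, and the finiteness and positivity of $\vol_{B_N}(N)$ implicit. You have spelled out exactly those omitted steps, and there is nothing to add.
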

\begin{proof}
The Riemannian density for $B_{\varphi}$ is
$
\vol_{B_{\varphi}} = {\rm e}^{p \varphi} \vol_{B_N} \otimes {\rm d}x.
$
\end{proof}

Lemmas~\ref{lem phi complete} and~\ref{lem fin vol} are well known and our results do not logically depend on them. We chose to include their short proofs for the sake of completeness and to illustrate properties of $\varphi$-cusps.

\begin{Example}\label{ex mu log x}
Suppose that $M$ has {$\varphi$-cusps}, with $\varphi(x) = -\mu \log(x)$ for $\mu \in \R$. Then
\begin{itemize}\itemsep=0pt
\item
$M$ has strongly admissible {cusps} if and only if $\mu>0$,
\item $M$ is complete if and only if $\mu \leq 1$,
\item $M$ has finite volume if and only if $\mu > 1/p$.
\end{itemize}
All three conditions hold in the case $\mu = 1$ of hyperbolic cusps. In the case $\mu = 0$ of a cylindrical end, $M$ has infinite volume and does not have strongly admissible cusps. But then $M$ has weakly admissible cusps as in Definition~\ref{def weakly adm} below, which is sufficient for our purposes.
\end{Example}

\subsection[Dirac operators on varphi-cusps]{Dirac operators on $\boldsymbol{\varphi}$-cusps}

From now on, we suppose that $S$ is a $G$-equivariant Clifford module, which means that there is a $G$-equivariant vector bundle homomorphism $c\colon TM \to \End(S)$, with values in the odd-graded endomorphisms, such that for all $v \in TM$,
\[
c(v)^2 = -\|v\|^2 \Id_S.
\]
Let $\nabla$ be a $G$-invariant connection on $S$ that preserves the grading. Suppose that for all vector fields $v$ and $w$ on $M$,
\begin{equation} \label{eq Clifford conn}
[\nabla_v, c(w)] = c\big(\nabla^{TM}_v w\big),
\end{equation}
where $\nabla^{TM}$ is the Levi-Civita connection. Consider the Dirac operator
\begin{equation} \label{eq Dirac}
D\colon\ \Gamma^{\infty}(S) \xrightarrow{\nabla} \Gamma^{\infty}(S \otimes T^*M) \cong
 \Gamma^{\infty}(S \otimes TM) \xrightarrow{c} \Gamma^{\infty}(S).
\end{equation}
It is odd with respect to the grading on $S$; we denote its restrictions to even- and odd-graded sections by $D^{\pm}$, respectively.

Suppose that we have a $G$-equivariant vector bundle isomorphism $S|_{C} \cong S|_N \times (a,\infty) \to N \times (a, \infty)$.

We will assume that $D$ has a natural form on $M\setminus Z$,~\eqref{eq DC Dphi} below.
This assumption is motivated by a special case, Proposition~\ref{prop Dphi}, which we discuss now.

Let $B_0 := B_N + {\rm d}x^2$ be the product metric on $N \times (a,\infty)$.
Then
\[
c_0:= {\rm e}^{-\varphi} c|_{C}\colon\ T(N \times (a,\infty)) \to \End(S|_{C})
\]
is a Clifford action for the metric $B_0$. Let $\{e_1,\dots, e_p\}$ be a local orthonormal frame for $TM$ with respect to $B_0$, with $e_p = \frac{\partial}{\partial x}$, and $\{e_1,\dots, e_{p-1}\}$ a local orthonormal frame for $TN$ with respect to $B_N$. (The objects that follow are defined globally by their expressions in terms of this frame, because they do not depend on the orthonormal frame.)

Because $p$ is even, the operator
\[
\gamma := (-{\rm i})^{p(p+1)/2} c_0(e_1) \cdots c_0(e_p)
\]
defines a $\Z/2$-grading
on $S$, and $c_0(v)$ is odd for this grading for all vector fields $v$ on $C$. We suppose that the given grading on $S$ equals $\gamma$ on $S|_C$.
Let $\nabla^0$ be a Clifford connection on $S|_C$ with respect to $B_0$ and $c_0$ (i.e., a Hermitian connection satisfying~\eqref{eq Clifford conn} for $B_0$ and $c_0$), and suppose that it preserves the grading $\gamma$, and that $\nabla_{\frac{\partial}{\partial x}} = \frac{\partial}{\partial x}$.

Let
\[
D_N := -c_0(e_p) \sum_{j=1}^{p-1}c_0(e_j) \nabla^0_{e_j}.
\]
This is a Dirac operator on $S|_N$ with respect to the Clifford multiplication
\[
c_N(v) := -c_0(e_p)c_0(v)
\]
for $v \in TN$.

\begin{Proposition}\label{prop Dphi}
There is a Clifford connection on $S|_C$, with respect to the Clifford action $c$ and the metric~\eqref{eq g phi},
 such that the resulting Dirac operator is
\begin{equation} \label{eq Dphi}
{\rm e}^{-\varphi} c_0(e_p)\bigg(\frac{\partial}{\partial x}+D_N +\frac{p-1}{2}\varphi' \bigg).
\end{equation}
\end{Proposition}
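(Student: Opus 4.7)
My plan is to exploit the conformal relation $B_{\varphi} = e^{2\varphi} B_0$ and the corresponding conformal covariance of the Dirac operator. Since $c = e^{\varphi} c_0$ is a Clifford action for $B_{\varphi}$, the standard conformal recipe produces a Clifford connection $\nabla$ on $(S|_C, c, B_{\varphi})$ from the given $\nabla^0$ by adding a zeroth-order term built from ${\rm d}\varphi$ and Clifford multiplication. This correction is even with respect to the grading $\gamma$, so $\nabla$ preserves the grading. The key property of the construction, which I would invoke rather than re-derive, is the conformal covariance identity
\[
D = e^{-(p+1)\varphi/2}\, D_0 \, e^{(p-1)\varphi/2},
\]
where $D_0 = \sum_{j=1}^p c_0(e_j)\nabla^0_{e_j}$ is the Dirac operator attached to $(c_0, B_0, \nabla^0)$.

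The next step is to rewrite $D_0$ in a form compatible with $D_N$. Separating the $j=p$ term and using the hypothesis $\nabla^0_{\partial/\partial x} = \partial/\partial x$, one has
\[
D_0 = c_0(e_p)\frac{\partial}{\partial x} + \sum_{j=1}^{p-1} c_0(e_j)\nabla^0_{e_j}.
\]
Because $c_0(e_p)^2 = -\Id$, multiplying the defining formula for $D_N$ on the left by $c_0(e_p)$ gives $\sum_{j<p} c_0(e_j)\nabla^0_{e_j} = c_0(e_p) D_N$, and hence
\[
D_0 = c_0(e_p)\bigg(\frac{\partial}{\partial x} + D_N\bigg).
\]

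Finally, I would substitute this into the conformal covariance identity and apply the Leibniz rule. Since $\varphi$ depends only on $x$, both $c_0(e_p)$ and $D_N$ commute with scalar multiplication by $e^{(p-1)\varphi/2}$, while $\grad_{B_0}\bigl(e^{(p-1)\varphi/2}\bigr) = \tfrac{p-1}{2}\varphi'\, e^{(p-1)\varphi/2}\, \partial/\partial x$. A short direct computation then yields
\[
D = e^{-\varphi}\, c_0(e_p)\bigg(\frac{\partial}{\partial x} + D_N + \frac{p-1}{2}\varphi'\bigg),
\]
which is~\eqref{eq Dphi}.

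The only substantive input is the conformal covariance formula, so the main obstacle is really just keeping signs and normalisations straight when one specifies the correction term defining $\nabla$ from $\nabla^0$: it must simultaneously verify $[\nabla_v, c(w)] = c(\nabla^{B_{\varphi}}_v w)$ for the Levi-Civita connection of $B_{\varphi}$ and produce the exponents $-(p+1)\varphi/2$ and $(p-1)\varphi/2$ in the conjugation identity. Once this classical point is nailed down, the rest is a bookkeeping exercise and the proposition follows.
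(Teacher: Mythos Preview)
Your proposal is correct and follows essentially the same route as the paper: the authors also reduce Proposition~\ref{prop Dphi} to the conformal covariance identity $D_{\varphi}=e^{-(p+1)\varphi/2}D_0\,e^{(p-1)\varphi/2}$ (their Proposition~\ref{prop Dirac psi}), and the specialisation to $\varphi=\varphi(x)$ via $D_0=c_0(e_p)\big(\partial/\partial x+D_N\big)$ and $\grad\varphi=\varphi' e_p$ is exactly as you describe. The only difference is that the paper spells out in Appendix~\ref{app Dirac} the construction of the Clifford connection $\nabla^{\varphi,1/2}=\nabla^0+\tfrac12 c_0(\grad\varphi)c_0(\,\cdot\,)+\tfrac12 B_0(\grad\varphi,\,\cdot\,)$ and verifies the Clifford, metric-preserving, and grading-preserving properties directly, whereas you (reasonably) invoke this as the standard conformal recipe.
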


This fact follows from a standard expression for conformal transformations of Dirac operators. (See, e.g., the proof of Proposition 1.3 in \cite{Hitchin74} in the $\Spinc$ case.) We summarise the arguments in Appendix~\ref{app Dirac} for the sake of completeness.

From now on, we make the two assumptions that
\begin{enumerate}\itemsep=0pt
\item[(1)] for some Dirac operator $D_N$ on $S|_N$ that preserves the grading, and some grading-reversing, $G$-equivariant, isometric vector bundle endomorphism $\sigma\colon S|_N \to S|_N$ that anti-commutes with $D_N$,
\begin{equation} \label{eq DC Dphi}
 D|_{M \setminus Z} = {\rm e}^{-\varphi} \sigma \bigg(\frac{\partial}{\partial x}+D_N +\frac{p-1}{2}\varphi' \bigg),
 \end{equation}
 \item[(2)] the Dirac operator $D_N$ is invertible.
\end{enumerate}
The first assumption is satisfied for a natural choice of Clifford connection on $S$, by Proposition~\ref{prop Dphi}. We indicate how to remove the second assumption in Section~\ref{sec DN not invtble}.

Because $D_N$ is invertible, there is a $b>0$ such that $D_N^2 \geq b^2$. For the purposes of our index theorem, the strong admissibility condition in Definition~\ref{def cusps} may be weakened to the following.
\begin{Definition}\label{def weakly adm}
The manifold $M$ has \emph{weakly admissible $\varphi$-cusps} or \emph{weakly admissible cusps} (with respect to $D_N$) if $\varphi$ is bounded above, and there are $a'\geq a$ and $\alpha>0$ such that for all $x \in (a', \infty)$,
\begin{equation} \label{eq bound phi' b}
|\varphi'(x)| \leq b - \alpha.
\end{equation}
\end{Definition}
\begin{Example}
If $M$ has $\varphi$-cusps with $\varphi(x) = -\mu\log(x)$ as in Example~\ref{ex mu log x}, then $M$ has weakly admissible cusps if and only if $\mu \geq 0$. This includes the case $\mu=0$ of a cylindrical end, relevant to the Atiyah--Patodi--Singer index theorem. For cusp metrics of this form, we have the implications
\[
\text{finite volume} \Rightarrow \text{strongly admissible cusps} \Rightarrow \text{weakly admissible cusps}.
\]
For general metrics of the form~\eqref{eq g phi}, only the second implication always holds.
\end{Example}

\begin{Example}
If $\varphi$ is periodic, then $\varphi$-cusps are never strongly admissible. But if $\varphi(x) = \tilde \varphi(\sin(x))$ for some $\tilde \varphi \in C^{\infty}([-1,1])$, then $\varphi$-cusps are weakly admissible if $|\tilde \varphi'| < b$.
\end{Example}

\subsection{Spectral theory for Sturm--Liouville operators}

Let $q$ be a real-valued, continuous function on the closed half-line $[0, \infty)$.
A crucial role will be played by the spectral theory of Sturm--Liouville operators of the form
\[
\Delta_{q}:= -\frac{{\rm d}^2}{{\rm d}y^2} + q,
\]
on $[0, \infty)$.
We briefly review this theory here, and refer to \cite{Levitan91, Titchmarsh62} for details.

For $\nu \in \C$, let $\theta_{\nu} \in C^{\infty}([0,\infty))$ be the unique solution of
\[
\Delta_{q}\theta_{\nu} = \nu \theta_{\nu},
\]
such that
\begin{equation} \label{eq Dirichlet}
\theta_{\nu}(0) = 0, \qquad
\theta_{\nu}'(0) = 1.
\end{equation}
The theory extends to more general boundary conditions, but we will only use the Dirichlet case. For $f \in C^{\infty}_c(0,\infty)$ and $\nu \in \R$, define the generalised Fourier transform
\[
\cF_q(f)(\nu):= \int_0^{\infty} f(y) \theta_{\nu} (y)\, {\rm d}y.
\]
 For a function $\rho\colon \R \to \C$, let $L^2(\R, {\rm d}\rho)$ be the space of square-integrable functions with respect to the measure ${\rm d}\rho$, in the sense of Stieltjes integrals. Note that this measure may have singular points, if $\rho$ is discontinuous.
 \begin{Theorem}\label{thm spec decomp}
 There exists a unique increasing function $\rho\colon \R \to \R$ with the following prop\-erties:
 \begin{itemize}\itemsep=0pt
 \item[$(a)$]
 The map $f \mapsto \cF_q(f)$ extends to a unitary isomorphism from $L^2([0,\infty))$ onto $L^2(\R, {\rm d}\rho)$.
 \item[$(b)$]
 For all continuous $f \in L^2([0,\infty))$ such that the integrands on the right-hand side are well-defined and the integral converges uniformly in $y$ in compact intervals,
 \begin{equation} \label{eq inversion}
 f(y) = \int_{\R} \cF_q(f)(\nu) \theta_{\nu}(y)\,{\rm d}\rho(\nu).
 \end{equation}
 \end{itemize}
 \end{Theorem}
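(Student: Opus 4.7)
The plan is to adapt the classical Weyl--Titchmarsh--Kodaira approach. First I would restrict $\Delta_q$ to a bounded interval $[0,b]$ with Dirichlet boundary conditions at both endpoints. On such an interval, $\Delta_q$ is essentially self-adjoint with compact resolvent, so it has a discrete spectrum $\{\lambda_n^{(b)}\}_{n \geq 1}$ and an orthonormal eigenbasis, where each eigenfunction is necessarily a scalar multiple $c_n^{(b)} \theta_{\lambda_n^{(b)}}$ (since $\theta_\nu$ is the unique solution of $\Delta_q u = \nu u$ satisfying the Dirichlet boundary condition $u(0)=0, u'(0)=1$). Define the step function
\[
\rho_b(\nu) := \sgn(\nu) \sum_{0 < \sgn(\nu) \lambda_n^{(b)} \leq |\nu|} \bigl|c_n^{(b)}\bigr|^2.
\]
Then Parseval's identity for the eigenbasis on $[0,b]$ reads, for $f \in C_c^\infty(0,\infty)$ with $\supp f \subset (0,b)$,
\[
\int_0^\infty |f(y)|^2\, {\rm d}y = \int_{\R} |\cF_q(f)(\nu)|^2 \, {\rm d}\rho_b(\nu).
\]

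Next I would obtain a bound on $\rho_b$ that is uniform in $b$ on each compact interval of $\nu$: applying Parseval to a well-chosen family of test functions (localised so that $\cF_q(f)(\nu)$ is bounded below on the interval in question, as in Titchmarsh) produces a bound independent of $b$. Helly's selection theorem then yields a subsequence $b_k \to \infty$ along which $\rho_{b_k} \to \rho$ pointwise on a dense set, for some increasing function $\rho\colon \R \to \R$.

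Passing to the limit $b_k \to \infty$ in the Parseval identity (using that $\cF_q(f)$ is continuous and rapidly decaying for $f \in C_c^\infty$, which follows from repeated integration by parts and estimates on $\theta_\nu$) shows that $\cF_q$ is an isometry from $C_c^\infty(0,\infty)$ into $L^2(\R, {\rm d}\rho)$, which extends by density to an isometry on all of $L^2([0,\infty))$. Surjectivity, and the inversion formula~\eqref{eq inversion}, follow by dualising: polarise the Parseval identity and apply it to $\cF_q^*(F)$ for $F \in C_c^\infty(\R)$, using Fubini to recognise $\int_\R F(\nu)\theta_\nu(y)\,{\rm d}\rho(\nu)$ as the inverse transform. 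Uniqueness of $\rho$ follows from the uniqueness of the spectral measure for the self-adjoint extension of $\Delta_q$ on $L^2([0,\infty))$ with Dirichlet condition at $0$: any two $\rho$'s satisfying~(a) would produce the same resolvent $(\Delta_q - \nu)^{-1}$ via~\eqref{eq inversion}, hence the same spectral measure.

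The main obstacle is the uniform bound on $\rho_b$ needed to invoke Helly's theorem; this requires choosing test functions $f$ whose generalised Fourier transforms $\cF_q(f)(\nu)$ are bounded below on the given compact set of $\nu$, which in turn rests on the smooth dependence of $\theta_\nu(y)$ on $\nu$ and the non-degeneracy of the Wronskian. A secondary technical point is justifying the exchange of limit and integral when passing from the finite-interval Parseval identity to the half-line identity, for which Arzel\`a--Ascoli-type equicontinuity of $\nu \mapsto \cF_q(f)(\nu)$ on compact sets suffices.
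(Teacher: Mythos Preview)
The paper does not prove this theorem: immediately after the statement it simply cites \cite[Theorems~2.1.1 and~2.1.2]{Levitan91} (and also refers to Titchmarsh's monograph~\cite{Titchmarsh62} for the companion Proposition~\ref{prop rho Titch}). It is quoted as a classical background fact about Sturm--Liouville spectral theory, not as something to be established within the paper.

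Your sketch is the standard Weyl--Titchmarsh--Kodaira construction, which is precisely the route taken in the references the paper cites. The outline is sound: truncate to $[0,b]$ with Dirichlet conditions, express the finite-interval expansion in terms of the single family $\theta_\nu$, encode the coefficients in the step function $\rho_b$, extract a limit via Helly after a uniform bound, and then pass to the limit in Parseval. You have correctly flagged the genuine technical point, namely the uniform-in-$b$ bound on $\rho_b$ over compact $\nu$-intervals; in the classical treatment this is done exactly as you say, by choosing test functions with $\cF_q(f)$ bounded below on the interval. One small addendum: the uniqueness you claim is usually stated up to an additive constant and a normalisation at points of continuity (one fixes $\rho(0)=0$ and takes $\rho$ right-continuous, say), so ``unique increasing function'' should be read with that convention. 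With that caveat, your proposal matches the cited proofs and there is nothing further to compare.
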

 See, for example, \cite[Theorems~2.1.1 and 2.1.2]{Levitan91}.

The spectral measure ${\rm d}\rho$ can be computed as follows.
\begin{Proposition}\label{prop rho Titch}
For $\nu \in \C$,
let $\theta_1( \relbar, \nu)$ and $\theta_2( \relbar, \nu)$ be the solutions of $\Delta_{q}\theta_j = \nu \theta_j$ on $[0, \infty)$, such that
\[
\theta_1(0,\nu) = 0,\qquad
\theta_1'(0,\nu) = 1,\qquad
\theta_2(0,\nu) = -1,\qquad
\theta_2'(0,\nu)= 0,
\]
where the prime denotes the derivative with respect to the first variable. There is a function~$f$ on the upper half-plane in $\C$, with negative imaginary part, such that for all $\nu$ in the upper half-plane, $\theta_2(\relbar, \nu)+ f(\nu) \theta_1(\relbar, \nu) \in L^2([0,\infty))$. And for all $\nu \in \R$,
\begin{equation} \label{eq expr rho}
\rho(\nu) = \frac{1}{\pi} \lim_{\delta \downarrow 0} \int_0^{\nu} -\Imag(f(\nu'+{\rm i}\delta))\, {\rm d}\nu'.
\end{equation}
\end{Proposition}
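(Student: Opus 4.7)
The argument I would give is Weyl's classical nested-disk construction. Fix $\nu$ in the open upper half-plane. For each $b > 0$, consider the family of solutions $u_z := \theta_2(\cdot, \nu) + z \theta_1(\cdot, \nu)$ with $z \in \C$. Imposing a real self-adjoint boundary condition $\cos(\beta) u(b) + \sin(\beta) u'(b) = 0$ at $b$ selects a circle $C_b \subset \C$ in the $z$-plane as $\beta$ ranges over $[0, \pi)$. Using Green's identity applied to $u_z$ and $\bar u_z$ on $[0, b]$, one computes the centre and radius of $C_b$ explicitly: the closed disks $D_b$ bounded by $C_b$ form a nested decreasing family as $b$ grows, with radii proportional to $\bigl(\Imag(\nu) \int_0^b |\theta_1|^2 \, {\rm d}y\bigr)^{-1}$. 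By compactness, $\bigcap_b D_b$ is nonempty; any point $f(\nu)$ in the intersection satisfies $\int_0^b |u_{f(\nu)}|^2 \, {\rm d}y \le \Imag(f(\nu))/\Imag(\nu)$ for every $b$, so $u_{f(\nu)} \in L^2([0, \infty))$.

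To verify $\Imag f(\nu) < 0$, I apply Green's identity on $[0, \infty)$ to $u := u_{f(\nu)}$, which satisfies $u(0) = -1$, $u'(0) = f(\nu)$ and $\Delta_q u = \nu u$. The boundary term at infinity vanishes because $u \in L^2$, so
\begin{equation*}
2 \Imag(\nu) \int_0^\infty |u|^2 \, {\rm d}y = -2 \Imag f(\nu),
\end{equation*}
forcing $\Imag f(\nu) < 0$ whenever $\Imag \nu > 0$. Holomorphy of $\nu \mapsto f(\nu)$ on the upper half-plane follows from continuous dependence of the disks $D_b$ on $\nu$ together with Montel's theorem.

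For the inversion formula \eqref{eq expr rho}, I would compare two expressions for the resolvent kernel of $\Delta_q - \nu$ with Dirichlet condition at $0$. Building it from $\theta_1$ (which satisfies the boundary condition at $0$) and $u_{f(\nu)}$ (which is $L^2$ at infinity) gives
\begin{equation*}
G(y, y'; \nu) = \theta_1(y_<, \nu)\bigl(\theta_2(y_>, \nu) + f(\nu)\theta_1(y_>, \nu)\bigr),
\end{equation*}
where $y_< = \min(y, y')$ and $y_> = \max(y, y')$, the Wronskian normalisation being $[u_{f(\nu)}, \theta_1] = 1$ by the initial conditions on $\theta_1, \theta_2$. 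On the other hand, Theorem~\ref{thm spec decomp} represents the resolvent kernel as $\int_\R \theta_{\nu'}(y)\theta_{\nu'}(y')(\nu'-\nu)^{-1}\, {\rm d}\rho(\nu')$. Matching these two expressions identifies $f$ as the Stieltjes transform of ${\rm d}\rho$, modulo an additive real constant fixed by the large-$|\nu|$ asymptotics of $G$ near the diagonal. The classical Stieltjes inversion formula for Herglotz functions then yields \eqref{eq expr rho}.

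The main obstacle is making this identification rigorous: one must justify the interchange of the $y, y' \to 0^+$ limit used to extract the coefficient of $\theta_1(y)\theta_1(y')$ with the spectral integral, especially near points of the continuous spectrum, and one must control the possible additive constant. In practice, the cleanest route, as in \cite{Titchmarsh62, Levitan91}, is to \emph{define} $\rho$ by the right-hand side of \eqref{eq expr rho} applied to the $f$ just constructed, and then verify a posteriori that this $\rho$ is the spectral measure appearing in Theorem~\ref{thm spec decomp}.
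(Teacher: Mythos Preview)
Your sketch is correct and follows the classical Weyl--Titchmarsh route: nested-disk construction of the $m$-function, Green's identity for the sign of $\Imag f$, resolvent kernel matched against the spectral representation, and Stieltjes inversion. This is precisely the argument carried out in the references the paper cites. The paper itself does not give a proof of this proposition; it simply records the statement and refers the reader to \cite[Theorem~2.4.1]{Levitan91} and \cite[Chapter~3]{Titchmarsh62} (Lemma~3.3 and the theorem on p.~60), noting only a normalisation issue with the factor $1/\pi$. So there is nothing to compare: you have supplied, in outline, the proof that the paper delegates to the literature, and your closing remark---that the cleanest route is to \emph{define} $\rho$ by the right-hand side of \eqref{eq expr rho} and then verify it agrees with the spectral measure of Theorem~\ref{thm spec decomp}---is exactly how those references proceed.
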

See \cite[Theorem~2.4.1]{Levitan91}, or~\cite[Chapter~3]{Titchmarsh62}, in particular
 Lemma~3.3 and the theorem on p.~60. The factor $\frac{1}{\pi}$ in~\eqref{eq expr rho} corresponds to the same factor in the inversion formula in the theorem on p.~60 of \cite{Titchmarsh62}, which is not present in~\eqref{eq inversion}.

\subsection[An index theorem for manifolds with admissible varphi-cusps]{An index theorem for manifolds with admissible $\boldsymbol{\varphi}$-cusps}

We suppose, as before, that $M$ is complete and has weakly admissible $\varphi$-cusps. Let $a$ be as in Definition~\ref{def cusps}. Consider the function $\xi\colon (a,\infty) \to (0, \infty)$ defined by
\begin{equation} \label{eq def xi}
\xi(x) := \int_{a}^x {\rm e}^{\varphi(x')}\, {\rm d}x'.
\end{equation}
Then $\xi$ is injective because its derivative is positive, and surjective by Lemma~\ref{lem phi complete}. For $\lambda \in \R$, define the functions $q_{\lambda}^{\pm} \in C^{\infty}(0,\infty)$ by
\begin{equation} \label{eq def q lambda}
q_{\lambda}^{\pm}(y) := \lambda \bigl((\lambda \pm \varphi') {\rm e}^{-2\varphi} \bigr)\circ \xi^{-1}.
\end{equation}
(We only use $q_{\lambda}^+$
in the current section, but $q_{\lambda}^-$
will also be used in Section~\ref{sec cusp contr}.)
In the case where $q = q^{\pm}_{\lambda}$, we write
\begin{equation} \label{eq Delta lambda}
 \Delta^{\pm}_{\lambda} := \Delta_{q^{\pm}_{\lambda}}.
\end{equation}
We then write $\rho^{\lambda, \pm}$ for the function $\rho$ in Theorem~\ref{thm spec decomp}, and $\theta_{\nu}^{\lambda, \pm}$ for the function $\theta_{\nu}$ in Theorem~\ref{thm spec decomp}.

\begin{Example}
Suppose that $\varphi(x) = -\mu\log(x)$, for $\mu \in \R$. If $\mu = 1$, then
\[
q_{\lambda}^{\pm}(y) = a^2 \lambda^2 {\rm e}^{2y} \mp a\lambda {\rm e}^y.
\]
If $\mu \not = 1$, then
\[
q_{\lambda}^{\pm}(y) = \lambda^2 \left( (1-\mu)y+a^{1-\mu} \right)^{\frac{2\mu}{1-\mu}} \mp \lambda \mu \left( (1-\mu)y + a^{1-\mu}\right)^{\frac{2\mu - 1}{1-\mu}}.
\]
\end{Example}

For a finite-dimensional vector space $V$ with a given representation by $G$, we write $\tr(g|_V)$ for the trace of the action by $g$ on $V$.
\begin{Definition}\label{def cusp cpt}
The \emph{$g$-delocalised $\varphi$-cusp contribution} associated to $D_N^+$ and a number ${a'>a}$~is
\begin{align}
\eta^{\varphi}_g\big(D_N^+, a'\big)&=2 {\rm e}^{-p\varphi(a')} \int_{0}^{\infty}
\sum_{\lambda \in \spec(D_N^+)}\sgn(\lambda) \tr\big(g|_{\ker(D_N^+ - \lambda)}\big)
\label{eq cusp cpt}
\\
&\phantom{=} \times
\int_{\R} {\rm e}^{-s\nu}
\theta^{|\lambda|, +}_{\nu} (\xi(a'))
 \big(\big(\theta^{|\lambda|, +}_{\nu} \big)'(\xi(a')) +|\lambda| {\rm e}^{-\varphi(a')} \theta^{|\lambda|, +}_{\nu} (\xi(a'))
 \big)\,{\rm d}\rho^{|\lambda|, +}(\nu)\nonumber
 \, {\rm d}s,
\end{align}
if the right-hand side converges.
\end{Definition}
We will see in Theorem~\ref{thm index cusps} that~\eqref{eq cusp cpt} indeed converges in the situations we consider.

The notation $\eta^{\varphi}_g$ and the factor $2$ in the definition are motivated by the fact that $\eta^0_e$ is the usual $\eta$-invariant of \cite{APS1}, and, more generally, $\eta^0_g$ is the delocalised $\eta$-invariant \cite{Donnelly}. See Proposition~\ref{prop cusp contr cylinder}.

\begin{Definition}
The spectrum of $D_N^+$ is \emph{$g$-symmetric} if for all $\lambda \in \R$,
\[
\tr\big(g|_{\ker(D_N^+ - \lambda)}\big) = \tr\big(g|_{\ker(D_N^+ + \lambda)}\big).
\]
\end{Definition}
Note that if $g=e$, then the spectrum of $D_N$ is {$e$-symmetric} precisely if it is symmetric around zero, including multiplicities.

For a $G$-equivariant, odd-graded, self-adjoint Fredholm operator $F$ on $L^2(S)$, we denote its restrictions to even- and odd-graded sections by $F^+$ and $F^-$, respectively. Then
\[
\ind_G(F) := [\ker(F^+)] - [\ker(F^-)] \in R(G)
\]
is the classical equivariant index of $F^+$, in the representation ring $R(G)$ of $G$. We denote the value of its character at $g$ by
\[
\ind_G(F)(g) = \tr\big(g|_{\ker(F^+)}\big) - \tr\big(g|_{\ker(F^-)}\big).
\]
Let $\AS_g(D)$ be the Atiyah--Segal--Singer integrand associated to $D$, see, for example, \cite[Theo\-rem~6.16]{BGV} or~\cite[Theo\-rem~3.9]{ASIII}. It is a differential form of mixed degree on the fixed-point set~$M^g$ of~$g$. The connected components of $M^g$ may have different dimensions, and the integral of $\AS_g(D)$ over $M^g$ is defined as the sum over these connected components of the integral of the component of the relevant degree.
\begin{Theorem}[index theorem on manifolds with admissible cusps]\label{thm index cusps}
Suppose that $M$ has weakly admissible $\varphi$-cusps, that $D|_C = D_{\varphi}$ as in Proposition~$\ref{prop Dphi}$, and that $D_N$ is invertible.
Then $D$ is Fredholm, the cusp contribution~\eqref{eq cusp cpt} converges for all $a'>a$, and
\begin{equation} \label{eq index cusps}
\ind_G(D)(g) = \int_{Z^g \cup (N^g \times (a,a'])} \AS_g(D) - \frac{1}{2}\eta_g^{\varphi}\big(D_N^+,a'\big).
\end{equation}
Furthermore, $\lim_{a' \downarrow a}\eta_g^{\varphi}\big(D_N^+,a'\big)$ converges, and
\begin{equation} \label{eq index cusps 2}
\ind_G(D)(g) = \int_{Z^g} \AS_g(D) - \frac{1}{2}\lim_{a' \downarrow a} \eta_g^{\varphi}\big(D_N^+,a'\big).
\end{equation}
If $\int_{M^g} \AS_g(D)$ converges, then $\lim_{a' \to \infty}\eta_g^{\varphi}\big(D_N^+,a'\big)$ converges, and
\begin{equation} \label{eq index cusps 3}
\ind_G(D)(g) = \int_{M^g} \AS_g(D) - \frac{1}{2}\lim_{a' \to \infty} \eta_g^{\varphi}\big(D_N^+,a'\big).
\end{equation}

If the spectrum of $D_N^+$ is $g$-symmetric, then $\eta_g^{\varphi}\big(D_N^+, a'\big) = 0$ for all $a'>a$.
If $M$ has strongly admissible $\varphi$-cusps, then $\Delta_{|\lambda|}^+$ has discrete spectrum for all $\lambda$, so the integral over $\nu$ in~\eqref{eq cusp cpt} becomes a sum.
\end{Theorem}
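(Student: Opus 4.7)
The plan is to combine the general index formula of \cite{HW21a}, which already splits $\ind_G(D)(g)$ into an Atiyah--Segal--Singer-type integral over $Z^g$ and a ``cusp contribution'' from outside $Z$, with an explicit spectral computation of that cusp contribution based on the Sturm--Liouville theory of Theorem~\ref{thm spec decomp}. The new content of Theorem~\ref{thm index cusps} is the identification of the outside contribution with $-\tfrac{1}{2}\eta_g^{\varphi}\big(D_N^+, a'\big)$.

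First, I would establish that $D$ is Fredholm. Using the prescribed form $D|_C = {\rm e}^{-\varphi}\sigma\big(\partial_x + D_N + \tfrac{p-1}{2}\varphi'\big)$, with $\sigma$ unitary and anticommuting with $D_N$, one computes $\|Ds\|_{L^2}^2$ for sections $s$ supported in $\{x>a'\}$. Integration by parts in $x$, together with the bounds $D_N^2\geq b^2$ and $|\varphi'|\leq b-\alpha$, yields an estimate $\|Ds\|_{L^2}\geq c\|s\|_{L^2}$ for some $c>0$; that is, $D$ is invertible at infinity in the sense of \cite{HW21a}, and Fredholmness follows from the general criterion of \cite{Anghel93b, Gromov83}.

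Next, I would invoke the main result of \cite{HW21a} to obtain a splitting $\ind_G(D)(g) = \int_{Z^g\cup(N^g\times(a,a'])}\AS_g(D) + \mathcal{R}(a')$, where $\mathcal{R}(a')$ is an abstract $L^2$-trace expression of McKean--Singer type over the cusp $N\times(a',\infty)$. To compute $\mathcal{R}(a')$, I would expand in an eigenbasis of $D_N$: the $g$-trace factors through the eigenspaces of $D_N^+$ with coefficients $\tr\big(g|_{\ker(D_N^+-\lambda)}\big)$, and for each $\lambda$ the radial contribution is governed by a one-dimensional problem on $(a',\infty)$. The substitution $y = \xi(x)$ from~\eqref{eq def xi}, combined with a conformal rescaling by a power of ${\rm e}^{\varphi}$ that converts the warped $L^2$-space on the cusp to the standard $L^2([0,\infty))$, transforms the squared radial operator into precisely $\Delta^+_{|\lambda|} = -\partial_y^2 + q^+_{|\lambda|}$ from~\eqref{eq Delta lambda}, with Dirichlet boundary condition at $y=0$ (corresponding to $x=a'$). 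Applying Theorem~\ref{thm spec decomp} to decompose the radial trace against the spectral measure ${\rm d}\rho^{|\lambda|,+}$, and performing the outer $s$-integral to pass from heat-kernel data to the signed quantity measured by $\eta$, reproduces exactly~\eqref{eq cusp cpt}. This proves~\eqref{eq index cusps}; formulas~\eqref{eq index cusps 2} and~\eqref{eq index cusps 3} then follow by letting $a'\downarrow a$ and $a'\to\infty$, using that $\int_{N^g\times(a,a']}\AS_g(D)\to 0$ in the first case and that convergence of $\int_{M^g}\AS_g(D)$ forces $\int_{N^g\times(a',\infty)}\AS_g(D)\to 0$ in the second.

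The final two assertions are direct consequences of~\eqref{eq cusp cpt}. The $g$-symmetry hypothesis produces exact pairwise cancellation of the contributions from $\pm\lambda$, because the factor $\sgn(\lambda)$ changes sign while the remaining factors depend on $\lambda$ only through $|\lambda|$. Under strong admissibility, $\varphi\to -\infty$ and $\varphi'\to 0$ force $q^+_{|\lambda|}(y)\to +\infty$ as $y\to\infty$, so $\Delta^+_{|\lambda|}$ has purely discrete spectrum by a standard Sturm--Liouville result (cf.\ \cite{Titchmarsh62}), and the integral over $\nu$ in~\eqref{eq cusp cpt} collapses to a sum. The main obstacle is the spectral reduction in the middle paragraph: identifying the abstract outside contribution of \cite{HW21a} with the concrete Dirichlet Sturm--Liouville expression~\eqref{eq cusp cpt} requires simultaneously tracking the change of variables $\xi$, the warping of the $L^2$-measure by ${\rm e}^{p\varphi}$, the conformal factor ${\rm e}^{-\varphi}$ in $D|_C$, and the boundary condition at $x=a'$, keeping careful account of how the $\tfrac{p-1}{2}\varphi'$ term in $D|_C$ is absorbed into the conjugation.
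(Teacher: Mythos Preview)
Your high-level strategy---reduce to the abstract index theorem of \cite{HW21a} and then identify its contribution from infinity spectrally---matches the paper's. But two concrete points in your middle paragraph are off, and they matter for reproducing~\eqref{eq cusp cpt}.

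First, the contribution from infinity $A_g(D_C,a')$ in \cite{HW21a} is not a bulk $L^2$-trace over $N\times(a',\infty)$; it is the time-integral of the trace over $N$ of the Schwartz kernel of $e_P^{-sD_C^-D_C^+}D_C^-$ \emph{evaluated on the slice} $x=x'=a'$ (see~\eqref{eq def At}). Here $D_C$ acts on the whole cusp $C=N\times(a,\infty)$, and the subscript $P$ is an APS-type boundary condition imposed at $x=a$, not at $x=a'$. The map $\xi$ in~\eqref{eq def xi} sends $x=a$ to $y=0$, so the Dirichlet condition in the Sturm--Liouville problem sits at $x=a$; the parameter $a'$ enters only through evaluating $\theta_\nu^{|\lambda|,+}$ and its derivative at $\xi(a')$, exactly as in~\eqref{eq cusp cpt}. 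Placing the boundary at $x=a'$ would make the Sturm--Liouville problem itself depend on $a'$ and would not reproduce~\eqref{eq cusp cpt}. Moreover, the APS projection $P$ is not a plain Dirichlet condition: it kills only the positive-eigenvalue components of $s|_N$. The paper handles this via the decomposition $e_P^{-s\widetilde{D}_C^-\widetilde{D}_C^+}\widetilde{D}_C^- = e_F^{-s\widetilde{D}_C^-\widetilde{D}_C^+}\widetilde{D}_C^-P + \widetilde{D}_C^-e_F^{-s\widetilde{D}_C^+\widetilde{D}_C^-}(1-P)$ into genuine Friedrichs (Dirichlet) pieces, after which the identity $q_\lambda^-=q_{-\lambda}^+$ merges the two halves into the single $\sgn(\lambda)$-weighted sum in~\eqref{eq cusp cpt}. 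This step is the heart of the computation and is absent from your sketch.

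Second, invoking Theorem~\ref{thm index general} requires more than invertibility at infinity: one must also check that $D_C^\pm$ with APS boundary conditions (the operators~\eqref{eq DC+} and~\eqref{eq DC-}) are invertible, and that $D_C^\mp D_C^\pm$ on the APS domains are self-adjoint. These are not automatic and are verified separately in the paper (Propositions~\ref{prop APS cusp} and~\ref{prop DC2 sa}), using that $\varphi$ is bounded above so that $h=e^{-\varphi\circ\xi^{-1}}$ has a positive lower bound. Your treatments of the $g$-symmetric case and of discrete spectrum under strong admissibility are correct and match the paper.
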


The case \eqref{eq index cusps 3} of Theorem~\ref{thm index cusps} applies in some relevant special cases. The following fact follows from Proposition~\ref{prop cusp contr cylinder}. This also follows from the fact that the $\hat A$-form on the cylinder $N \times (a, \infty)$ is zero.

\begin{Lemma}
In the setting of Theorem~$\ref{thm index cusps}$,
if $\varphi = 0$, then~\eqref{eq index cusps 3} applies.
\end{Lemma}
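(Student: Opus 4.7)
The plan is to show that, when $\varphi=0$, the hypothesis of~\eqref{eq index cusps 3} is satisfied automatically, because $\AS_g(D)$ in fact vanishes in top degree on the entire cylindrical part of $M^g$. Since $M^g = Z^g \cup (N^g \times (a,\infty))$ and $Z^g$ is compact, this would give
\[
\int_{M^g} \AS_g(D) = \int_{Z^g} \AS_g(D) < \infty,
\]
which is precisely the assumption needed to invoke~\eqref{eq index cusps 3} in Theorem~\ref{thm index cusps}.

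The first step is to record that on the product $N \times (a,\infty)$ with the flat-product metric $B_N + {\rm d}x^2$, the Levi--Civita connection is the product of the Levi--Civita connection on $N$ with the trivial connection on $(a,\infty)$. By Proposition~\ref{prop Dphi} applied with $\varphi=0$, the Clifford connection on $S|_C$ likewise satisfies $\nabla^0_{\partial/\partial x} = \partial/\partial x$ and is pulled back from a connection on $S|_N$. Consequently both the Riemann curvature $R^{TM}$ and the curvature of $\nabla^0$, restricted to $C$, are pullbacks of curvatures from $N$, and neither contains a ${\rm d}x$ factor. Because $\frac{\partial}{\partial x}$ is $g$-fixed and tangent to $M^g$ along $N^g \times (a,\infty)$, the same is true of the curvature of the normal bundle of $M^g$ in $M$ and of the constant action of $g$ in the normal direction, which is the remaining equivariant datum entering $\AS_g(D)$.

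Next, I would invoke the explicit formula for $\AS_g(D)$ from \cite[Theorem~6.16]{BGV}: it is a universal polynomial in the curvatures and equivariant normal data just described (the equivariant $\hat A$-form of $TM^g$, the twisted Chern character of $S$, and the normal-bundle denominator). Its restriction to $N^g \times (a,\infty)$ is therefore the pullback of a differential form on $N^g$. But the top degree on $N^g \times (a,\infty)$ is $\dim N^g + 1$, whereas a pullback from $N^g$ has nonzero components only in degrees $\leq \dim N^g$. Hence the top-degree component of $\AS_g(D)$ vanishes on each connected component of $N^g \times (a,\infty)$, giving the claim.

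The argument presents no real obstacle. The one point requiring a bit of care is to verify that every factor of the ASS integrand is genuinely polynomial in the curvatures and equivariant data listed above, so that no factor could independently produce a contribution involving ${\rm d}x$; once this is checked, the degree count finishes the proof. (As noted in the statement, this also follows from Proposition~\ref{prop cusp contr cylinder} together with~\eqref{eq index cusps}, by letting $a' \to \infty$; but the direct argument above is cleaner.)
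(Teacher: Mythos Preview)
Your proof is correct and matches the paper's reasoning: the paper states that the lemma ``follows from Proposition~\ref{prop cusp contr cylinder}'' and ``also follows from the fact that the $\hat A$-form on the cylinder $N \times (a, \infty)$ is zero,'' which are precisely the two routes you give (with the emphasis reversed --- you develop the direct curvature-pullback argument in detail and mention the Proposition~\ref{prop cusp contr cylinder} route parenthetically).
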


The case \eqref{eq index cusps 3} also applies if $M^g$ is compact; this is equivalent to $g$ having no fixed points on $N$.
\begin{Lemma}In the setting of Theorem~$\ref{thm index cusps}$, if $M^g$ is compact, then~\eqref{eq index cusps 3} applies.
\end{Lemma}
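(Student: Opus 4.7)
The proof plan is essentially trivial. The case~\eqref{eq index cusps 3} of Theorem~\ref{thm index cusps} requires only that $\int_{M^g}\AS_g(D)$ converges, and $\AS_g(D)$ is a smooth differential form of mixed degree on the (possibly disconnected) manifold~$M^g$, so when $M^g$ is compact this integral is automatically finite. Hence Theorem~\ref{thm index cusps} directly yields~\eqref{eq index cusps 3}. That is the complete proof of the lemma.

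For context, one should also justify the parenthetical remark preceding the lemma, that compactness of~$M^g$ is equivalent to $g$ having no fixed points on~$N$. The key point is that the $G$-action on $C \cong N \times (a,\infty)$ preserves the radial coordinate~$x$: the function~$\xi$ from~\eqref{eq def xi} is the $B_\varphi$-distance to the boundary~$N$, and hence is $G$-invariant because $G$ acts by isometries and preserves~$N$ set-wise. Since $\xi$ is strictly monotone, $x$ itself is $G$-invariant, so each slice $N \times \{x\}$ is $g$-stable. A short Killing-field computation in the conformally equivalent product metric $B_N + {\rm d}x^2$ then shows that the induced action on each slice is independent of~$x$ and coincides with the boundary action on~$N$. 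Consequently $(N \times (a,\infty))^g = N^g \times (a,\infty)$, and since $Z^g$ is compact, $M^g = Z^g \cup (N^g \times (a,\infty))$ is compact if and only if $N^g = \emptyset$.

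There is no substantive obstacle in either step: the analytic content of the lemma reduces to the finiteness of an integral of a smooth form over a compact manifold, and the structural description of the $G$-action on the cusp is standard. If anything merits care in the write-up, it is the constancy in~$x$ of the induced action on each slice, but this is immediate from the Killing equation for the product metric.
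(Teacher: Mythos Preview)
Your proof is correct. The paper states this lemma without proof, and your core argument---that the integral of a smooth form over a compact manifold automatically converges---is precisely the trivial observation the paper relies on; your additional justification of the parenthetical remark that $M^g$ compact $\Leftrightarrow N^g=\emptyset$ goes beyond what the paper supplies but is sound.
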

If $M$ has strongly admissible cusps, so that $\Delta_{|\lambda|}^+$ has discrete spectrum, and we assume~for~sim\-plicity that the eigenspaces are one-dimensional, then unitarity of \smash{$\cF_{q_{|\lambda|}^+}$} implies that the measure~with respect to ${\rm d}\rho_{\nu}^{|\lambda|, +}$ of every point $\nu \in \spec(\Delta_{|\lambda|}^+)$ is $1/\|\theta_{\nu}^{|\lambda|, +}\|_{L^2}^{2}$. Then~\eqref{eq cusp cpt} becomes
\begin{align*}
\eta^{\varphi}_g\big(D_N^+, a'\big) &=
2 {\rm e}^{-p\varphi(a')}
 \int_{0}^{\infty}
\sum_{\lambda \in \spec(D_N^+)}\sgn(\lambda) \tr\big(g|_{\ker(D_N^+ - \lambda)}\big)
\\
&\phantom{=}\times\!\!\!\!\sum_{\nu \in \spec(\Delta_{|\lambda|}^+)}\!
\frac{{\rm e}^{-s\nu}}{\|\theta_{\nu}^{|\lambda|, +}\|_{L^2}^2}
\theta^{|\lambda|, +}_{\nu} (\xi(a'))
 \big( \big(\theta^{|\lambda|, +}_{\nu} \big)'(\xi(a')) +|\lambda| {\rm e}^{-\varphi(a')} \theta^{|\lambda|, +}_{\nu} (\xi(a')) \big) \, {\rm d}s.
\end{align*}
It is generally not possible to normalise the eigenfunctions $\theta_{\nu}^{|\lambda|, +}$ so that their $L^2$-norms are $1$, because they should satisfy~\eqref{eq Dirichlet}.

\begin{Remark}
The expression~\eqref{eq cusp cpt} can be extended to $a' = a$, but then it equals zero because the functions $\theta_{\nu}^{|\lambda|, +}$ satisfy~\eqref{eq Dirichlet}. So the limit $\lim_{a' \downarrow a} \eta_g^{\varphi}\big(D_N^+,a'\big)$ on the right-hand side of~\eqref{eq index cusps 2} is generally different from the version of~\eqref{eq cusp cpt} with $a' = a$. See Remark~\ref{rem cyl zero at a} for an example, and also~\cite[Remarks~2.4 and~5.12]{HW21a}.
\end{Remark}

\begin{Remark}As is the case for the classical $\eta$-invariant, the cusp contribution~\eqref{eq cusp cpt} measures (an equivariant version of) \emph{spectral asymmetry}, in the sense that it is zero if the spectrum of $D_N^+$ is $g$-symmetric.
 It is intriguing that even in this more general setting, symmetry or asymmetry of the spectrum of $D_N$ determines if a contribution ``from infinity'' is required in index theorems on manifolds of the type we consider. Spectral asymmetry of $D_N^+$ can be measured in different ways, and apparently, the cusp shape function $\varphi$ determines what measure of spectral asymmetry is relevant for an index theorem on $M$.
\end{Remark}

\subsection{Relations to other results}\label{sec other results}

In some cases, the cusp metric~\eqref{eq g phi} can be transformed to a metric of the form $\rho^2 B_N + {\rm d}u^2$, for a function $\rho$ of a radial coordinate $u$. This helps to clarify the relations between Theorem~\ref{thm index cusps} and the results in \cite{Baier02, BB03, BBC12, Stern84}.

Suppose that $\varphi'$ has no zeroes.
Let $\rho$ be a positive, smooth function defined on $(\tilde a, \tilde b)$, with $\tilde a:= \rho^{-1}\big({\rm e}^{\varphi(a)}\big)$ and $\tilde b:= \lim_{x \to \infty} \rho^{-1}\big({\rm e}^{\varphi(x)}\big)$. Suppose that, on this interval,
\begin{equation} \label{eq rho phi}
\rho' = \varphi'\circ \varphi^{-1}\circ \log ( \rho).
\end{equation}
Then the map $(n, u) \mapsto \bigl(n, \varphi^{-1}(\log(\rho(u))) \bigr)$ is an isometry from $N \times \big(\tilde a, \tilde b\big)$, with Riemannian metric $\rho^2 B_N + {\rm d}u^2$, onto $N \times (a, \infty)$, with Riemannian metric ${\rm e}^{2\varphi} \big(B_N + {\rm d}x^2\big)$.

In the case where $\varphi(x) =-\mu\log(x)$, for $\mu \in [0,1]$, the function
\[
\rho(u) = (1-\mu)^{\mu/(\mu-1)} u^{\mu/(\mu-1)}
\]
is a solution of~\eqref{eq rho phi} if $\mu \not= 1$. If $\mu = 1$, then a solution is $\rho(u) = {\rm e}^{-u}$, a well-known alternative form of hyperbolic metrics.

In his thesis \cite{Stern84}, Stern computed the index of the signature operator on finite-volume manifolds with cusps $N \times (\tilde a, \infty)$ with metric $\rho^2 B_N + {\rm d}u^2$, under certain growth conditions on $\rho$ and its derivatives. These conditions hold in the case where $\varphi(x) =-\mu\log(x)$, for $\mu \in [0,1]$.

Baier \cite{Baier02} developed
index theory on $\Spin$-manifolds with cusps $N\times (\tilde a, \infty)$, with metrics of the form $\rho^2 B_N + {\rm d}u^2$.
He considered indices of $\Spin$-Dirac operators, and proved
\begin{enumerate}\itemsep=0pt
\item[(1)] vanishing of the index if there is a $c>1$ such that $u^{-c} \rho(u)$ has a positive lower bound for large $u$,
\item[(2)] an index formula if there is a $c<1$ such that $u^{-c} \rho(u)$ has an upper bound for large $u$,
\item[(3)] inequalities satisfied by the index in other cases.
\end{enumerate}

 In cylinder case $\mu = 0$ and the hyperbolic case $\mu = 1$, the second of Baier's results applies. For $\mu \in (0,1)$, neither condition in the first two cases holds, so the inequalities for the index in the third case apply.

In cases where both Theorem~\ref{thm index cusps} and Baier's index formula apply, Baier's formula has a~simpler contribution from outside $Z$ (the classical $\eta$-invariant), but a more complicated contribution from inside $Z$, involving the dimension of the kernel of $D^-|_Z$ with Dirichlet boundary conditions at $N$.

Ballmann and
Br\"uning \cite{BB03} considered two-dimensional manifolds of finite area, with cusps of a type that includes
 cusps $S^1 \times (\tilde a, \infty)$ with metrics of the form $\rho^2 B_N + {\rm d}u^2$ mentioned above. Then they obtained an explicit index formula for Dirac operators.

 Ballmann,
Br\"uning and Carron \cite{BBC12} studied manifolds with cusps of a different but related form. In their setting, the manifold's ends are diffeomorphic to $N \times (\tilde a, \infty)$ via the gradient flow of a function with certain properties. They obtain a general index theorem in this setting, and more concrete index theorems in the case where the metric on the ends is cuspidal. A metric of the form $\rho^2 B_N + {\rm d}u^2$, with $\rho$ related to $\varphi$ via~\eqref{eq rho phi}, is cuspidal if $\varphi(x) =-\log(x)$, but not if $\varphi(x) =-\mu \log(x)$ for $\mu \in [0,1)$.

Vaillant \cite{Vaillant01} obtained an index theorem for manifolds with fibred versions of hyperbolic cusps. In the case of hyperbolic cusps, he showed that the contribution from infinity equals zero. It is an interesting question to find conditions on $\varphi$ that imply vanishing of the cusp contribution in general.

For some results on the spectrum of Dirac or Laplace operators on manifolds with hyperbolic cusps, see \cite{Baer00} for $\Spin$-Dirac operators and \cite{MP90} for the Hodge Laplacian. See \cite{Moroianu07} for the Hodge Laplacian on manifolds with a generalisation of hyperbolic cusps.

See \cite{Moscovici82} for finite-dimensionality of the kernels of Dirac operators on finite-volume hyperbolic locally symmetric spaces. The Fredholm property in Theorem~\ref{thm index cusps} is a stronger version of this, under the condition that $D_N$ is invertible. The latter condition can be removed as in Section~\ref{sec DN not inv}.

In APS-type index theorems where the metric does not have a product structure near the boundary, the contribution from the boundary can usually be written as a local contribution involving a transgression form, and a spectral contribution, the usual $\eta$-invariant. See, e.g., \cite{Gilkey93, Grubb92, Salomonsen98}, or \cite{BM19} for an equivariant version. It is an interesting question if the cusp contribution in Theorem~\ref{thm index cusps}, defined in terms of the spectrum of the operator $D_N^+$, can be decomposed in a similar way, into a local contribution from the manifold $N$ and a possibly simpler spectral contribution.

\section{Proof of Theorem~\ref{thm index cusps}}

We first state an index theorem from \cite{HW21a} for Dirac operators that are invertible outside a~compact set, see Theorem~\ref{thm index general}. Then we deduce Theorem~\ref{thm index cusps} from this result.

\subsection{Dirac operators that are invertible at infinity}\label{sec dirac ops}

We review the geometric setting and notation needed to formulate Theorem 2.2 in \cite{HW21a}, this is Theorem~\ref{thm index general} below. The text leading up to Theorem~\ref{thm index general} is a slight reformulation of corresponding material from \cite{HW21a}. Compared to Theorem~\ref{thm index cusps}, some assumptions in
 the main result from \cite{HW21a}, Theorem~\ref{thm index general}, are weaker, but there are additional assumptions on $D$, such as invertibility at infinity.
 Part of the proof of Theorem~\ref{thm index cusps} is to show that these additional assumptions hold in the setting of manifolds with admissible cusps.

 We assume in this subsection and the next that $M$ has $\varphi$-cusps, but not that these cusps are weakly or strongly admissible.

We say that $D$ is \emph{invertible at infinity} if there are a $G$-invariant compact subset $Z\subset M$ and a constant $b>0$ such that for all $s \in \Gamma^{\infty}_c(S)$ supported in $M \setminus Z$,
\begin{equation} \label{eq D inv infty}
\|Ds\|_{L^2} \geq b \|s\|_{L^2}.
\end{equation}
We assume in this subsection and the next that $D$ is {invertible at infinity}, and that the set $Z$ may be taken as in Definition~\ref{def cusps}. (In \cite{HW21a}, we took $a=0$, now we allow general $a \geq 0$ for consistency with Theorem~\ref{thm index cusps}.)

For $k=0, 1, 2, \dots$, let $W^k_D(S)$ be the completion of $\Gamma^{\infty}_c(S)$ in the inner product
\begin{equation*}
(s_1, s_2)_{W^k_D} := \sum_{j=0}^k \big(D^js_1, D^js_2\big)_{L^2}.
\end{equation*}
Because $D$ is invertible at infinity, it is Fredholm as an operator
\[
D\colon\ W^1_D(S) \to L^2(S).
\]
See \cite[Theorem 2.1]{Anghel93b} or \cite[Theorem 3.2]{Gromov83}.

Rather than the more specific form~\eqref{eq DC Dphi} of $D$ on $C = M \setminus Z$, we assume that
\begin{equation} \label{eq D on U}
D|_C = \sigma \bigg(f_1 \frac{\partial}{\partial x} + f_2 D_N + f_3 \bigg),
\end{equation}
where
\begin{itemize}\itemsep=0pt
\item $\sigma \in \End(S|_N)^G$ interchanges $S^+|_N$ and $S^-|_N$,
\item $f_1, f_2, f_3 \in C^{\infty}(a,a+2)$,
\item $D_N$ is a $G$-equivariant, invertible Dirac operator on $S|_N$ that preserves the grading.
\end{itemize}

Consider the vector bundle
\[
S_C := S|_{C} \to C.
\]
For $k \in \N$ at least $1$, consider the Sobolev space
\[
W^k_D(S_C) := \big\{s|_C; s \in W^k_D(S)\big\}.
\]
(See \cite{BB12} for other constructions of such Sobolev spaces on manifolds with boundary.)
We denote the subspaces of even- and odd-graded sections by $W^k_D\big(S_C^{\pm}\big)$, respectively.

If $s \in W^{k+1}_D(S)$, then the restriction of $Ds \in W^k_D(S)$ to the interior of $C$ is determined by the restriction of $s$ to the interior of $C$. Since $k \geq 1$, the restriction of $Ds$ to the interior of $C$ has a unique extension to $C$. So $Ds|_C$ is determined by $s|_C$. In this way, $D$ gives a well-defined, bounded operator from $W^{k+1}_D(S_C)$ to $W^k_D(S_C)$, which we denote by
 $D_C$.

Because $N$ is compact, $D_N$ has discrete spectrum.
Let $D_N^{\pm}$ be the restriction of $D_N$ to sections of $S^{\pm}|_N$.
 Let $L^2\big(S^+|_N\big)_{>0}$ be the direct sum of these eigenspaces for positive eigenvalues (recall that $0$ is not an eigenvalue).
 Consider the orthogonal projection
\begin{equation} \label{eq def P+}
P^+\colon\ L^2\big(S^+|_N\big) \to L^2\big(S^+|_N\big)_{>0}.
\end{equation}
(In \cite{HW21a}, more general spectral projections are allowed, but this is the one relevant to the current setting.)
We will also use the projection
\[
P^- := \sigma_+ P^+ \sigma_+^{-1} \colon\ L^2\big(S^-|_N\big) \to \sigma_+L^2\big(S^+|_N\big)_{>0},
\]
where $\sigma_+:= \sigma|_{(S^+|_N)}$.
In general, $P^-$ is not necessarily a spectral projection for $D_N^-$, but in the setting that is relevant to us, we have $\sigma D_N^+ = -D_N^- \sigma$, so $P^-$ is projection onto the \emph{negative} eigenspaces of $D_N^-$; see~\eqref{eq DN sigma}.
We combine $P^+$ and $P^-$ to an orthogonal projection
\begin{equation} \label{eq def P}
P := P^+ \oplus P^-\colon \ L^2(S|_N) \to L^2\big(S^+|_N\big)_{>0} \oplus
\sigma_+L^2\big(S^+|_N\big)_{>0}.
\end{equation}
We
 will sometimes omit the superscripts $\pm$ from $P^{\pm}$.

Consider the spaces
\begin{equation} \label{eq def WnD}
\begin{split}
&W^1_D\big(S_C^+; P\big):= \big\{s \in W^1_D\big(S_C^+\big); P^+(s|_N) = 0\big\},\\[1mm]
&W^1_D\big(S_C^-;1- P\big) := \big\{s \in W^1_D\big(S_C^-\big); (1-P^-)(s|_N) = 0\big\},\\[1mm]
&W^2_D\big(S_C^+; P\big) := \big\{s \in W^2_D\big(S_C^+\big); P^+(s|_N) = 0, (1-P^-)\big(D^+_Cs|_N\big) = 0\big\},\\[1mm]
&W^2_D\big(S_C^-; 1-P\big) := \big\{s \in W^2_D\big(S_C^-\big); (1-P^-)(s|_N) = 0, P^+\big(D^-_Cs|_N\big) = 0\big\}.
\end{split}
\end{equation}
Here we use the fact that there are well-defined, continuous restriction/extension maps
\[
{W^1_D(S_C) \to L^2(S|_N)}.
\]
We assume that
\begin{itemize}\itemsep=0pt
\item the operators
\begin{equation} \label{eq DC+}
D^+_C\colon\ W^1_D\big(S_C^+; P\big) \to L^2\big(S_C^-\big)
\end{equation}
and
\begin{equation} \label{eq DC-}
D^-_C\colon\ W^1_D\big(S_C^-; 1-P\big) \to L^2\big(S_C^+\big)
\end{equation}
are invertible; and
\item the operator $D^-_C D_C^+$ on $L^2\big(S_C^+\big)$, with domain $W^2_D\big(S_C^+; P\big)$, and the operator $D^+_C D_C^-$ on $L^2\big(S_C^-\big)$, with domain $W^2_D\big(S_C^-; 1-P\big)$, are self-adjoint.
\end{itemize}

\subsection{An index theorem} \label{sec index thm general}

Under the assumptions in Section~\ref{sec dirac ops}, we state an index theorem using the following ingredients.

For $t>0$, let ${\rm e}_{P}^{-t D_C^- D_C^+}$ be the heat operator for the operator
 $D^-_C D_C^+$ on $L^2\big(S_C^+\big)$, with domain $W^2_D\big(S_C^+; P\big)$.
By~\cite[Lemma 4.7]{HW21a}, the operator ${\rm e}_{P}^{-t D_C^- D_C^+}D_C^-$ has a smooth kernel $ \lambda_t^{P}$.
 The \emph{contribution from infinity} associated to $D_C$ and $a' \in (a,a+2)$ is
\begin{equation} \label{eq def At}
A_{g}(D_C, a') := -f_1(a') \int_0^{\infty}
\int_N \tr(g \lambda_s^{P}(g^{-1}n, a'; n, a'))\, {\rm d}n\,{\rm d}s,
\end{equation}
defined whenever
 the integral in~\eqref{eq def At} converges.

The following result is a combination of Theorem 2.2 and Corollary 2.3 in \cite{HW21a}.
\begin{Theorem}[index theorem for Dirac operators invertible at infinity]\label{thm index general}
 For all $a' > a$, the quantity~\eqref{eq def At} converges, and
\begin{equation} \label{eq index general}
\ind_G(D)(g) = \int_{Z^g \cup (N^g \times (a,a'])} \AS_g(D)+ A_{g}(D_C, a').
\end{equation}
Furthermore, the limit $\lim_{a' \downarrow a}A_{g}(D_C, a')$ converges, and
\[
\ind_G(D)(g) = \int_{Z^g} \AS_g(D)+ \lim_{a' \downarrow a}A_{g}(D_C, a').
\]
\end{Theorem}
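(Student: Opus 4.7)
The plan is to prove Theorem~\ref{thm index general} via the equivariant McKean--Singer formula combined with an Atiyah--Patodi--Singer-type gluing along $N \times \{a'\}$. First I would verify that $g\, e^{-tD^2}$ is trace class on $L^2(S)$ for every $t>0$ and that $\ind_G(D)(g) = \Str\bigl(g\, e^{-tD^2}\bigr)$; off-diagonal Gaussian bounds combined with the coercive estimate~\eqref{eq D inv infty} give Gaussian decay of the heat kernel away from $Z$, which together with the warped product structure of $C$ makes the pointwise supertrace integrable, and the equivariant McKean--Singer identity then follows by the usual argument.

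Next I would split $M$ along $N \times \{a'\}$ into $M_- := Z \cup (N \times (a, a'])$ and $M_+ := N \times [a', \infty)$, and impose the APS-type boundary conditions of~\eqref{eq def WnD} on each side, with resulting boundary value problems $D_-^P$ and $D_+^P$. The hypotheses of Section~\ref{sec dirac ops} make $D_+^P$ invertible, so $\ind_G(D_+^P)(g) = 0$. A Duhamel/parametrix comparison at the separating hypersurface gives
\[
\ind_G(D)(g) = \ind_G\bigl(D_-^P\bigr)(g) + B(g, a'),
\]
where $B(g, a')$ is a boundary defect supported near $N \times \{a'\}$.

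Donnelly's equivariant APS theorem applied to the compact manifold with boundary $M_-$ rewrites $\ind_G(D_-^P)(g)$ as $\int_{Z^g \cup (N^g \times (a, a'])} \AS_g(D)$ minus a spectral boundary contribution. On the cusp side, invertibility of $D_+^P$ means the heat supertrace $\Str\bigl(g\, e^{-t(D_+^P)^2}\bigr)$ vanishes identically in~$t$; writing this vanishing trace via the fundamental theorem of calculus as an $s$-integral from $0$ to $\infty$, applying Duhamel together with the explicit form~\eqref{eq D on U} of $D$, and reducing $D^2 = D^- D^+$, produces the $f_1(a')$-prefactor and the boundary Schwartz kernel $\lambda_s^P$ of $e^{-sD_C^- D_C^+}D_C^-$ at $(n, a'; n, a')$. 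Matching these contributions with $B(g, a')$ and the spectral boundary term from $D_-^P$ identifies the total defect as exactly $A_g(D_C, a')$ in~\eqref{eq def At}. Absolute convergence of the $s$-integral follows from the spectral gap of $D_C^- D_C^+$ inherited from invertibility of $D_N$.

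The main obstacle is the gluing/Duhamel step: one must prove that the surgery of $e^{-tD^2}$ into a pair of APS heat semigroups on $M_\pm$ produces an error whose Schwartz kernel is sharply concentrated near $N \times \{a'\}$, with a trace that matches the geometric boundary data uniformly in~$t$. Passing to the limit $a' \downarrow a$ then requires complementary regularity estimates: the $\AS$-density must extend regularly to $N \times \{a\}$ and $A_g(D_C, a')$ must converge as $a' \downarrow a$, so that the two $a'$-dependent pieces compensate. Once the heat kernel surgery is justified, the identification of the terms is essentially bookkeeping between a local density and a boundary spectral trace.
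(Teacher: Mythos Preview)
This theorem is not proved in the present paper at all: it is quoted from \cite{HW21a} (see the sentence ``The following result is a combination of Theorem~2.2 and Corollary~2.3 in \cite{HW21a}'' immediately before the statement). From the way the result is invoked in Section~\ref{sec DN not inv}, one can infer the strategy of \cite{HW21a}: extend $D$ to an operator $\tilde D$ on a \emph{closed} manifold containing $Z\cup(N\times(a,a+1])$, compare $\Str\bigl(g\,e^{-t\tilde D^{\mp}\tilde D^{\pm}}\psi\bigr)$ for a smooth cutoff $\psi$, and identify the defect as $A_g(D_C,a')$ by a Duhamel argument \emph{on $C$ itself}; the hard cutoff at $a'$ is obtained afterwards by letting $\psi$ approach a step function (cf.\ the remark after Theorem~\ref{thm index non invtble}). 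So your splitting at $N\times\{a'\}$ with APS conditions on both sides is a genuinely different route.

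There is, however, a real gap in your outline. The quantity $A_g(D_C,a')$ in \eqref{eq def At} is built from $\lambda_s^P$, the Schwartz kernel of $e_P^{-sD_C^-D_C^+}D_C^-$ on $C=N\times(a,\infty)$ with APS boundary condition imposed at $x=a$ (see \eqref{eq def WnD}); the parameter $a'$ is an \emph{interior} point of $C$ at which this kernel is evaluated. Your construction, by contrast, works on $M_+=N\times[a',\infty)$ with APS conditions at $x=a'$. The heat kernel of the latter problem is not $\lambda_s^P$: for a genuinely warped metric there is no translation invariance in $x$, so the two operators live on different domains with different boundary data and there is no reason their kernels should coincide at $(n,a';n,a')$. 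Consequently the ``matching'' step, where you claim the Donnelly $\eta$-term from $M_-$ together with the surgery defect $B(g,a')$ reassembles into $A_g(D_C,a')$, is not just bookkeeping---it is the entire content of the theorem, and your scheme produces the wrong boundary-value problem to see it. A secondary issue is that Donnelly's theorem in the form you invoke assumes a product metric near the boundary, whereas near $N\times\{a'\}$ the metric $e^{2\varphi}(B_N+{\rm d}x^2)$ is not of product type unless $\varphi$ is constant; you would need a non-product version with an extra transgression term, which again has to be absorbed somewhere.
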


Theorem~\ref{thm index cusps} follows from Theorem~\ref{thm index general} because of the following three propositions.
\begin{Proposition} \label{prop ind cusps cond}
In the setting of Theorem~$\ref{thm index cusps}$, the conditions of Theorem~$\ref{thm index general}$ hold.
\end{Proposition}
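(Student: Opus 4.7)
Theorem~\ref{thm index general} has four hypotheses to verify: (a) invertibility of $D$ at infinity; (b) that $D|_C$ has the form \eqref{eq D on U}; (c) invertibility of the boundary-value operators \eqref{eq DC+} and \eqref{eq DC-}; and (d) self-adjointness of $D_C^- D_C^+$ and $D_C^+ D_C^-$ on the APS-type domains \eqref{eq def WnD}. Assumption (b) is immediate from \eqref{eq DC Dphi}, with $f_1 = f_2 = {\rm e}^{-\varphi}$ and $f_3 = \tfrac{p-1}{2}{\rm e}^{-\varphi}\varphi'$, all smooth on $(a,\infty) \supset (a,a+2)$; the hypotheses on $\sigma$ and on $D_N$ are built into the setting of Theorem~\ref{thm index cusps}.

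For (a), the plan is to substitute $u := {\rm e}^{(p-1)\varphi/2}s$ to eliminate the zeroth-order term and then integrate by parts in $x$ in the cross term; its boundary contributions vanish once $s$ has support in $N\times (a',\infty)$, where $a'$ is as in \eqref{eq bound phi' b}. This gives
\[
\|Ds\|_{L^2}^2 \;=\; \int_{N\times(a',\infty)} {\rm e}^{-\varphi}\bigl(\|\partial_x u\|^2 + \|D_N u\|^2 + \varphi'(u, D_N u)\bigr)\,dn\,dx.
\]
Expanding $u$ fibrewise in $D_N$-eigenfunctions with eigenvalues $|\lambda|\geq b$ reduces the integrand to $\sum_\lambda \bigl(|\partial_x u_\lambda|^2 + (\lambda^2 + \varphi'\lambda)|u_\lambda|^2\bigr)$. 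The key pointwise estimate is $\lambda^2 + \varphi'\lambda = \lambda(\lambda + \varphi') \geq b\alpha$, valid for both signs of $\lambda$ because $|\varphi'|\leq b-\alpha$ and $|\lambda|\geq b$. Combined with the elementary inequality ${\rm e}^{-\varphi}\geq {\rm e}^{-2\sup\varphi}\,{\rm e}^{\varphi}$, available since $\varphi$ is bounded above, this yields $\|Ds\|^2 \geq b\alpha\, {\rm e}^{-2\sup\varphi}\|s\|^2$; enlarging $Z$ to $Z\cup (N\times [a,a'])$ then establishes \eqref{eq D inv infty}.

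For (c), I would invert $D_C^\pm$ mode-by-mode via the $D_N$-spectral decomposition. For each eigenvalue $\lambda$ of $D_N^+$, the equation $D_C^+ s = t$ reduces to the first-order ODE $(\partial_x + \lambda + \tfrac{p-1}{2}\varphi')s_\lambda = {\rm e}^{\varphi}\,t_{-\lambda}$, solved by the integrating factor ${\rm e}^{\lambda x + (p-1)\varphi/2}$. The APS condition $P^+(s|_N)=0$ forces $s_\lambda(a)=0$ exactly for $\lambda>0$; for $\lambda<0$ the initial datum is uniquely fixed by requiring $s_\lambda\in L^2({\rm e}^{p\varphi}\,dx)$. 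A Cauchy--Schwarz bound on the resulting Green's kernel, uniform in $\lambda$ thanks to $|\lambda|\geq b$ and \eqref{eq bound phi' b}, together with $\sup\varphi<\infty$ to absorb the weight ${\rm e}^{\varphi}$ on the right-hand side, delivers the required $L^2$-estimate; $D_C^-$ is handled symmetrically using $\sigma D_N^+ = -D_N^-\sigma$. Condition (d) then follows because $D_C^\mp D_C^\pm$ are second-order elliptic operators, and the domains \eqref{eq def WnD} encode the APS condition together with its complementary adjoint condition --- a standard self-adjoint elliptic boundary problem.

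The main obstacle is step (c): obtaining $L^2$-bounds on the Green's operators of $D_C^\pm$ uniformly in the $D_N$-eigenvalue $\lambda$ requires a sign-dependent analysis that balances the growth of the integrating factor ${\rm e}^{\lambda x + (p-1)\varphi/2}$ against the cusp weight ${\rm e}^{p\varphi}$, and is precisely the place where the weakly admissible hypothesis \eqref{eq bound phi' b} enters in an essential way.
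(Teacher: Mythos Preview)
Your strategy is essentially the paper's: verify (a) by squaring and integrating the cross term by parts, and verify (c) by mode-by-mode ODE inversion with integrating factors. The paper streamlines this by first applying a unitary Liouville transformation $T$ combining your substitution $u=e^{(p-1)\varphi/2}s$ with the change of variable $y=\xi(x)=\int_a^x e^{\varphi}$; this converts $D_C$ into $\widetilde D_C=\sigma(\partial_y+hD_N)$ on $N\times(0,\infty)$ with the \emph{product} density, where $h=e^{-\varphi\circ\xi^{-1}}$. After this reduction the mode-by-mode ODE for (c) is simply $f'+\lambda h f=\zeta$, and the $L^2$ bound follows (Lemma~\ref{lem f g}) by comparison with the constant-coefficient case via $H_\lambda(u,v)\le e^{\varepsilon\lambda(v-u)}$, invoking \cite[Proposition~2.5]{APS1}. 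This is cleaner than balancing $e^{\lambda x+(p-1)\varphi/2}$ against $e^{p\varphi}$ in the original coordinates.

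Two points deserve correction. First, your attribution of hypotheses is inverted: the bound \eqref{eq bound phi' b} on $|\varphi'|$ is used \emph{only} for (a), and the paper says so explicitly just before Lemma~\ref{lem bd Delta}. For (c) and (d) the sole requirement is $\sup\varphi<\infty$, equivalently that $h$ has a positive lower bound; this is all that enters the kernel estimate in Lemma~\ref{lem f g}(4). Your own computation for (a) already extracts the full strength of \eqref{eq bound phi' b}, so claiming it is again ``essential'' for (c) is a misdiagnosis.

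Second, your treatment of (d) as a ``standard self-adjoint elliptic boundary problem'' is a genuine gap. The domain $C=N\times(a,\infty)$ is noncompact, so the usual boundary-value theory for compact manifolds with boundary does not apply directly; essential self-adjointness at infinity must be established. The paper handles this by first proving (c), then showing (Proposition~\ref{prop adjoints DC}) that the bounded inverses satisfy $\bigl((\widetilde D_C^+)^{-1}\bigr)^*=(\widetilde D_C^-)^{-1}$, whence $(\widetilde D_C^+)^*=\widetilde D_C^-$ on the APS domains; self-adjointness of $\widetilde D_C^\mp\widetilde D_C^\pm$ then follows (Proposition~\ref{prop DC2 sa}) because these operators are invertible with self-adjoint bounded inverse. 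You should not skip this step.
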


\begin{Proposition} \label{prop ind cusp contr}
In the setting of Theorem~$\ref{thm index cusps}$, the cusp contribution~$\eqref{eq cusp cpt}$ converges for all $a'>a$, and
\begin{equation*} 
A_{g}(D_C, a') = -\frac{1}{2}\eta_g^{\varphi}\big(D_N^+, a'\big).
\end{equation*}
If the spectrum of $D_N$ is $g$-symmetric, then the right-hand side is zero.
\end{Proposition}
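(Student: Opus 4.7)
The plan is to compute the Schwartz kernel $\lambda_s^P$ of ${\rm e}_P^{-sD_C^-D_C^+}D_C^-$ by a fibrewise spectral decomposition, substitute into \eqref{eq def At}, and check term-by-term that the result equals $-\tfrac{1}{2}\eta_g^{\varphi}(D_N^+,a')$.

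First I would decompose $L^2(S_C^+)$ along the eigensections of $D_N^+$: for a section $f(x)\phi_\lambda(n)$ in the $\lambda$-isotype (with $\phi_\lambda\in\ker(D_N^+-\lambda)$), the substitution $y=\xi(x)$ and $u(y) = {\rm e}^{(p-1)\varphi(x)/2}f(x)$ is a unitary isomorphism $L^2\bigl((a,\infty), {\rm e}^{p\varphi}\,{\rm d}x\bigr) \to L^2(0,\infty)$. A direct calculation using \eqref{eq DC Dphi}, the intertwining $\sigma D_N = -D_N\sigma$, and $\sigma^2 = -\Id$ shows that in these coordinates $D_C^+$ becomes ${\rm e}^{(1-p)\varphi/2}L^+_\lambda$ with $L^\pm_\lambda := \partial_y\pm\lambda {\rm e}^{-\varphi}$, and consequently $D_C^-D_C^+$ is conjugate to the Sturm--Liouville operator $\Delta_{q_\lambda^+}$ on $L^2(0,\infty)$. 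The APS boundary conditions defining $W^2_D(S_C^+;P)$ translate into the Dirichlet condition $u(0)=0$ when $\lambda>0$, and the Robin-type condition $u'(0)+\lambda {\rm e}^{-\varphi(a)}u(0) = 0$ when $\lambda<0$.

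For $\lambda>0$, Theorem~\ref{thm spec decomp} produces the Dirichlet heat kernel of $\Delta^+_{|\lambda|}$:
\[
K_s^+(y_1,y_2) = \int_\R {\rm e}^{-s\nu}\theta^{|\lambda|,+}_\nu(y_1)\theta^{|\lambda|,+}_\nu(y_2)\,{\rm d}\rho^{|\lambda|,+}(\nu).
\]
For $\lambda<0$ the relevant operator is $\Delta^-_{|\lambda|}$ (since $q_\lambda^+ = q_{|\lambda|}^-$), related to $\Delta^+_{|\lambda|}$ by the supersymmetric factorisation $\Delta^\pm_{|\lambda|} = -L^\mp_{|\lambda|}L^\pm_{|\lambda|}$ and the formal adjoint identity $(L^+_{|\lambda|})^* = -L^-_{|\lambda|}$. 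The map $\theta\mapsto L^+_{|\lambda|}\theta$ sends Dirichlet eigenfunctions of $\Delta^+_{|\lambda|}$ at $\nu\neq 0$ bijectively to eigenfunctions of $\Delta^-_{|\lambda|}$ satisfying the required Robin condition, and scales the $L^2$-norm by $\sqrt{\nu}$. This lets me rewrite the $\lambda<0$ heat kernel in terms of $\theta^{|\lambda|,+}_\nu$ and ${\rm d}\rho^{|\lambda|,+}$, with an overall sign change that accounts for the $\sgn(\lambda)$ factor in \eqref{eq cusp cpt}.

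Composing with $D_C^-$, which in the transformed coordinates acts as $L^\pm_\lambda$ in one variable, and integrating by parts using the respective boundary conditions, converts the action of $D_C^-$ on the heat kernel into application of $L^+_{|\lambda|}$ on one of its arguments. Evaluating on the diagonal at $y_1=y_2=\xi(a')$ and taking the equivariant trace against the $D_N^+$-eigenbasis on $N$ yields precisely the integrand of \eqref{eq cusp cpt}: the factor $\tr(g|_{\ker(D_N^+-\lambda)})$, the sign $\sgn(\lambda)$, and the inner integral
\[
\int_\R {\rm e}^{-s\nu}\theta^{|\lambda|,+}_\nu(\xi(a'))\bigl[(\theta^{|\lambda|,+}_\nu)'(\xi(a')) + |\lambda|{\rm e}^{-\varphi(a')}\theta^{|\lambda|,+}_\nu(\xi(a'))\bigr]\,{\rm d}\rho^{|\lambda|,+}(\nu).
\]
Tracking the Weyl factors ${\rm e}^{\pm(p-1)\varphi(a')/2}$ from the change of variables together with the prefactor $-f_1(a') = -{\rm e}^{-\varphi(a')}$ from \eqref{eq def At} gives the overall coefficient $-\tfrac{1}{2}\cdot 2{\rm e}^{-p\varphi(a')}$ matching \eqref{eq cusp cpt}. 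Convergence of \eqref{eq cusp cpt} for each $a'>a$ is then inherited from the convergence of $A_g(D_C,a')$ asserted in Theorem~\ref{thm index general}. Finally, if the spectrum of $D_N^+$ is $g$-symmetric, then since the inner $(s,\nu)$-integral depends on $\lambda$ only through $|\lambda|$, the $\sgn(\lambda)$-weighted sum over $\spec(D_N^+)$ cancels by pairing $\pm\lambda$ terms.

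The main obstacle will be the careful verification of signs, normalisations, and spectral-measure rescalings for the $\lambda<0$ contributions: one must confirm that the $1/\nu$ weight arising from $\|L^+\theta\|^2 = \nu\|\theta\|^2$ (which governs the Robin-BC spectral measure) is precisely cancelled by a compensating $\nu$ coming from the differential action of $D_C^-$ via $L^-L^+ = -\Delta^+$, leaving the clean expression with ${\rm d}\rho^{|\lambda|,+}$ alone and producing the decisive sign $\sgn(\lambda)$.
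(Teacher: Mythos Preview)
Your proposal is correct in outline and arrives at the same destination, but it takes a different and somewhat harder route than the paper.

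The paper never confronts the Robin boundary condition you identify for $\lambda<0$. Instead, after the same unitary transformation $T$ (Lemmas~\ref{lem T isom}--\ref{lem D0pm}), it invokes the operator identity
\[
{\rm e}_P^{-s\widetilde{D}_C^-\widetilde{D}_C^+}\widetilde{D}_C^- \;=\; {\rm e}_F^{-s\widetilde{D}_C^-\widetilde{D}_C^+}\widetilde{D}_C^-\,P \;+\; \widetilde{D}_C^-\,{\rm e}_F^{-s\widetilde{D}_C^+\widetilde{D}_C^-}(1-P),
\]
quoted from \cite[Proposition~3.5]{HW21a}, which rewrites the APS heat operator entirely in terms of \emph{Friedrichs} (Dirichlet) heat operators of $\widetilde{D}_C^-\widetilde{D}_C^+$ and $\widetilde{D}_C^+\widetilde{D}_C^-$. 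On each $\lambda$-eigenspace both of these restrict to Sturm--Liouville operators with Dirichlet data at $0$, so Theorem~\ref{thm spec decomp} applies directly and produces, for $\lambda>0$, the kernel written in $\theta^{\lambda,+}_\nu$ and ${\rm d}\rho^{\lambda,+}$, and for $\lambda<0$, the kernel in $\theta^{\lambda,-}_\nu$ and ${\rm d}\rho^{\lambda,-}$. The unification of the two pieces then comes from the purely algebraic observation $q^+_\lambda=q^-_{-\lambda}$, hence $\theta^{\lambda,-}_\nu=\theta^{|\lambda|,+}_\nu$ and $\rho^{\lambda,-}=\rho^{|\lambda|,+}$ for $\lambda<0$ (Lemma~\ref{lem kernel on diagonal}). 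No spectral-measure rescaling is needed, and the $\sgn(\lambda)$ emerges simply from comparing $\widetilde D_C^-$ applied to the first entry versus the adjoint $\widetilde D_C^+$ applied to the second.

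Your approach --- computing the genuine APS (hence Robin for $\lambda<0$) heat kernel and then using the supersymmetric intertwiner $L^+_{|\lambda|}$ to pull it back to the Dirichlet problem --- is essentially a kernel-level rederivation of the operator identity above. It works, but the ``main obstacle'' you flag is real: you must justify the spectral-measure transformation $\|L^+\theta\|^2=\nu\|\theta\|^2$ in the possibly continuous-spectrum regime (weakly admissible cusps), where the $\theta_\nu$ are not in $L^2$ and the Titchmarsh--Weyl $m$-functions for the two boundary conditions are related nontrivially. The paper's route sidesteps this entirely: by moving $\widetilde D_C^-$ across the heat operator \emph{before} any spectral decomposition, it never leaves the Dirichlet world, and the only residual bookkeeping is the potential symmetry $q^+_\lambda=q^-_{-\lambda}$.

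Your treatment of convergence (inherited from Theorem~\ref{thm index general}) and of the $g$-symmetric vanishing matches the paper exactly.
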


\begin{Proposition}\label{prop discrete spec}
If $M$ has strongly admissible $\varphi$-cusps, then the operators $\Delta_{|\lambda|}^{+}$ have discrete spectrum for all $\lambda \in \spec\big(D_N^+\big)$.
\end{Proposition}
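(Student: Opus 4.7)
The plan is to reduce the statement to the following standard criterion for one-dimensional Schrödinger operators: if $q\in C([0,\infty),\R)$ satisfies $q(y)\to+\infty$ as $y\to\infty$, then the Friedrichs/Dirichlet realisation of $-\frac{d^2}{dy^2}+q$ on $[0,\infty)$ has compact resolvent, and hence purely discrete spectrum. Granting this well-known fact (see, e.g., Reed--Simon Vol.~IV, Thm.~XIII.67, or the Molchanov criterion), everything reduces to checking the asymptotics of the potential $q_{|\lambda|}^+$.

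First I would note that, because $D_N$ is assumed invertible, every $\lambda\in\spec(D_N^+)$ satisfies $\lambda\neq 0$, so $|\lambda|>0$. Next I would analyse the change of variable $\xi$ from~\eqref{eq def xi}: since $\xi'(x)=e^{\varphi(x)}>0$, the map $\xi\colon(a,\infty)\to(0,\infty)$ is a smooth increasing bijection (surjectivity onto $(0,\infty)$ follows from Lemma~\ref{lem phi complete} because $M$ is complete). In particular $\xi^{-1}(y)\to\infty$ iff $y\to\infty$, and $q_{|\lambda|}^+$ extends smoothly to $[0,\infty)$ since $\xi^{-1}(0)=a$ and all factors in~\eqref{eq def q lambda} are smooth at $x=a$.

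Now I would exploit strong admissibility. From~\eqref{eq def q lambda},
\[
q_{|\lambda|}^+(y)=|\lambda|\bigl(|\lambda|+\varphi'(\xi^{-1}(y))\bigr)e^{-2\varphi(\xi^{-1}(y))}.
\]
By~\eqref{eq strong adm}, $\varphi(\xi^{-1}(y))\to-\infty$ and $\varphi'(\xi^{-1}(y))\to 0$ as $y\to\infty$. Hence $e^{-2\varphi(\xi^{-1}(y))}\to+\infty$ while $|\lambda|+\varphi'(\xi^{-1}(y))\to|\lambda|>0$, so $q_{|\lambda|}^+(y)\to+\infty$. Applying the discreteness criterion to the continuous, lower-semibounded potential $q_{|\lambda|}^+$ gives the conclusion.

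I do not expect any real obstacle here; the only minor point of care is to confirm that $q_{|\lambda|}^+$ is genuinely continuous up to $y=0$ (so that the Dirichlet realisation coincides with the one underlying Theorem~\ref{thm spec decomp}), and to cite the discreteness theorem in a form that applies to Sturm--Liouville operators with Dirichlet boundary condition at a regular endpoint and a confining potential at the other end. Note that the argument genuinely uses both halves of~\eqref{eq strong adm}: $\varphi(x)\to-\infty$ is what makes $e^{-2\varphi}$ blow up, and $\varphi'(x)\to 0$ is needed to ensure that $|\lambda|+\varphi'$ stays bounded away from $0$ (and in particular does not change sign), so that the product tends to $+\infty$ rather than remaining bounded or oscillating.
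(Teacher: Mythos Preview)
Your proposal is correct and follows essentially the same route as the paper: show that $q_{|\lambda|}^+(y)\to+\infty$ as $y\to\infty$ using the two strong admissibility conditions~\eqref{eq strong adm}, then invoke a standard discreteness criterion for Sturm--Liouville operators with confining potential. The paper cites Levitan--Sargsjan rather than Reed--Simon or Molchanov, and is terser about the limit computation, but the argument is the same; your additional remarks on the bijectivity of $\xi$ and the roles of the two halves of~\eqref{eq strong adm} are helpful elaborations rather than a different strategy.
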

\begin{proof}
It follows from the definition~\eqref{eq def q lambda} of $q_{\lambda}^{+}$ that for all $\lambda \not=0$,
\[
\lim_{y \to \infty}q_{|\lambda|}^{+}(y) = \infty
\]
if $\varphi$ satisfies~\eqref{eq strong adm}. It follows that $\Delta_{|\lambda|}^{+}$ has discrete spectrum, see~\cite[Theorem~1.3.1, Lemma~3.1.1 and equation~(1.3)]{Levitan91}.
\end{proof}

In the rest of this section, we prove Propositions~\ref{prop ind cusps cond} (at the end of Section~\ref{sec adjoints}) and~\ref{prop ind cusp contr} (at the end of Section~\ref{sec cusp contr}), and thus Theorem~\ref{thm index cusps}. (The case~\eqref{eq index cusps 3} follows immediately from the case~\eqref{eq index cusps}.)

\subsection{Transforming Dirac operators}

We return to the setting of Section~\ref{sec results}, where $D$ is of the form in Proposition~\ref{prop Dphi}.

To compute~\eqref{eq def At}, we use a Liouville-type transformation to relate $D_C$ and $D_C^2$ to simpler operators. At the same time, this allows us to transform the Riemannian density of $B_{\varphi}$ to a~product density.

Define $\Phi \in C^{\infty}(a, \infty)$ by
\begin{equation} \label{eq def Phi}
\Phi(x) := {\rm e}^{-\frac{p-1}{2}\varphi(x)}.
\end{equation}
(Recall that $p = \dim(M)$.)
Consider the vector bundle $\widetilde{S}_C := S|_N \times (0,\infty) \to N \times (0, \infty)$. For a section $s$ of $\widetilde{S}_C$ and $n \in N$ and $x \in (a,\infty)$, define
\[
(Ts)(n,x) := \Phi(x) s(n, \xi(x))
\]
with $\xi$ as in~\eqref{eq def xi}.
\begin{Lemma}\label{lem T isom}
The operator $T$ defines a $G$-equivariant unitary isomorphism
\[
T\colon \ L^2\big(\widetilde{S}_C, \vol_{B_N} \otimes {\rm d}y\big) \to L^2\big(S|_C, \vol_{B_{\varphi}}\big).
\]
\end{Lemma}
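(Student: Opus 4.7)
The plan is to verify the isometry property by a direct computation, after which invertibility and $G$-equivariance are essentially formal.

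First I would unpack the two $L^2$ inner products. The Riemannian density for $B_\varphi$ is $\vol_{B_\varphi} = e^{p\varphi}\,\vol_{B_N} \otimes \mathrm{d}x$, as already noted in the proof of Lemma~\ref{lem fin vol}, while the density on the target of $\xi$ is the product $\vol_{B_N} \otimes \mathrm{d}y$. So for $s \in \Gamma_c^\infty(\widetilde{S}_C)$ the definition of $T$ gives
\[
\|Ts\|^2 = \int_N \int_a^\infty \Phi(x)^2\,\bigl|s(n,\xi(x))\bigr|^2\,e^{p\varphi(x)}\,\mathrm{d}x\,\mathrm{d}n.
\]
Since $\Phi(x)^2 = e^{-(p-1)\varphi(x)}$ by~\eqref{eq def Phi}, the factor in front of $|s(n,\xi(x))|^2$ collapses to $e^{\varphi(x)}$.

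The second step is the change of variables $y = \xi(x)$. By definition~\eqref{eq def xi} of $\xi$, one has $\mathrm{d}y = e^{\varphi(x)}\,\mathrm{d}x$, and $\xi$ is a smooth bijection $(a,\infty) \to (0,\infty)$ (injective because $\xi' > 0$ and surjective by Lemma~\ref{lem phi complete}). Substituting yields
\[
\|Ts\|^2 = \int_N \int_0^\infty \bigl|s(n,y)\bigr|^2\,\mathrm{d}y\,\mathrm{d}n = \|s\|^2,
\]
so $T$ is an isometry on a dense subspace and extends to an isometry of Hilbert spaces. For surjectivity, I would exhibit the explicit inverse
\[
(T^{-1}t)(n,y) = \Phi\bigl(\xi^{-1}(y)\bigr)^{-1}\,t\bigl(n,\xi^{-1}(y)\bigr),
\]
which is well-defined because $\Phi > 0$ everywhere and $\xi$ is a diffeomorphism; one checks $T T^{-1} = \mathrm{Id}$ and $T^{-1}T = \mathrm{Id}$ pointwise.

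Finally, $G$-equivariance is immediate: the $G$-action on $C \cong N \times (a,\infty)$ is through the first factor alone (by the definition of $\varphi$-cusps), and both $\Phi$ and $\xi$ are functions of $x$ only, so $T$ intertwines the $G$-actions fiberwise on $S|_N$. There is no real obstacle here; the only point requiring care is tracking the power of $e^{\varphi}$ so that the prefactor $\Phi^2\,e^{p\varphi}$ matches exactly the Jacobian $e^{\varphi}$ of $\xi$—which is precisely why $\Phi$ was chosen with the exponent $-(p-1)/2$ in~\eqref{eq def Phi}.
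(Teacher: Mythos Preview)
Your proposal is correct and follows essentially the same route as the paper: both verify the isometry by the change of variables $y=\xi(x)$, using $\Phi(x)^2 e^{p\varphi(x)}=e^{\varphi(x)}=\xi'(x)$. You add explicit remarks on surjectivity and $G$-equivariance that the paper leaves implicit, but the core argument is identical.
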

\begin{proof}
We have
$\vol_{B_{\varphi}} = \vol_{B_N} \otimes {\rm e}^{p\varphi} {\rm d}x$.
And by a substitution $y = \xi(x)$, and the equality $\xi' = {\rm e}^{\varphi}$, we have for all $f \in L^2( (0,\infty), {\rm d}y)$,
\begin{equation*}
\|\Phi \cdot (f \circ \xi)\|^2_{L^2( (a,\infty), {\rm e}^{p\varphi} {\rm d}x)} =
 \int_{a}^{\infty} |f(\xi(x))|^2 {\rm e}^{\varphi(x)}\, {\rm d}x = \|f\|^2_{L^2( (0,\infty), {\rm d}y)}.\tag*{\qed}
\end{equation*}
\renewcommand{\qed}{}
\end{proof}

Consider the function $h:= {\rm e}^{-\varphi \circ \xi^{-1}} \in C^{\infty}(0, \infty)$, and the Dirac operator
\begin{equation*}
\widetilde{D}_C := \sigma\bigg( \frac{\partial}{\partial y} + h D_N\bigg)
\end{equation*}
on $\Gamma^{\infty}\big(\widetilde{S}_C\big)$. Here $y$ is the coordinate in $(0, \infty)$. Let $D_C$ be as in Section~\ref{sec dirac ops}, but viewed as acting on smooth sections. It is given by~\eqref{eq Dphi}.
\begin{Lemma}\label{lem T Dpsi}
The following diagram commutes:
\[
\xymatrix{
\Gamma^{\infty}(S_C) \ar[r]^-{D_{C}} & \Gamma^{\infty}(S_C) \\
\Gamma^{\infty}\big(\widetilde{S}_C\big) \ar[u]^-{T} \ar[r]^-{\widetilde{D}_C} & \Gamma^{\infty}\big(\widetilde{S}_C\big) \ar[u]^-{T}.
}
\]
\end{Lemma}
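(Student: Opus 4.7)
The plan is a direct computation using the chain rule, with the key cancellation coming from the definition of $\Phi$.

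First I would apply $D_C$, in the form~\eqref{eq Dphi} given by Proposition~\ref{prop Dphi}, to a section $Ts$ and compute term by term. The chain rule gives
\[
\frac{\partial}{\partial x}\bigl(\Phi(x) s(n, \xi(x))\bigr) = \Phi'(x)\,s(n,\xi(x)) + \Phi(x)\,\xi'(x)\,(\partial_y s)(n,\xi(x)),
\]
and $\xi'(x) = {\rm e}^{\varphi(x)}$ by~\eqref{eq def xi}. The crucial observation is that the definition~\eqref{eq def Phi} of $\Phi$ yields
\[
\Phi'(x) = -\tfrac{p-1}{2}\varphi'(x)\Phi(x),
\]
so when one collects the $\partial_x$ term from $D_C(Ts)$ with the zero-order term $\tfrac{p-1}{2}\varphi'(x)\Phi(x)s(n,\xi(x))$, these two contributions cancel exactly.

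What remains is
\[
D_C(Ts)(n,x) = {\rm e}^{-\varphi(x)} \sigma \bigl( \Phi(x){\rm e}^{\varphi(x)}(\partial_y s)(n,\xi(x)) + \Phi(x) D_N s(n,\xi(x)) \bigr).
\]
Factoring out $\Phi(x)$ and cancelling the remaining ${\rm e}^{\pm\varphi(x)}$ factors, and using the identity ${\rm e}^{-\varphi(x)} = {\rm e}^{-\varphi(\xi^{-1}(\xi(x)))} = h(\xi(x))$, this becomes
\[
\Phi(x)\,\sigma\bigl( (\partial_y s)(n,\xi(x)) + h(\xi(x)) D_N s(n,\xi(x)) \bigr) = \Phi(x)(\widetilde{D}_C s)(n,\xi(x)) = T(\widetilde{D}_C s)(n,x),
\]
which proves commutativity of the diagram.

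There is no real obstacle: the computation is entirely local on $C$, and $\sigma$ passes through $\partial_y$ and $D_N$ as a constant bundle endomorphism. The only thing to double-check is that $\Phi$ was defined with exactly the right exponent $-(p-1)/2$ to trigger the cancellation; this is precisely the reason for the Liouville-type substitution, and the very motivation for introducing $T$ and $\widetilde D_C$.
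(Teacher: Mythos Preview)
Your proof is correct and is exactly the direct computation the paper has in mind; the paper's own proof is the single sentence ``This is a direct computation, based on~\eqref{eq Dphi} and~\eqref{eq DC Dphi}.'' One small wording issue: $\sigma$ does not ``pass through'' $D_N$ (it anti-commutes with it), but nothing in your argument actually requires commuting $\sigma$ past $D_N$---you only apply $\sigma$ on the left after evaluating $(\partial_x + D_N + \tfrac{p-1}{2}\varphi')(Ts)$, which is fine.
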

\begin{proof}
This is a direct computation, based on~\eqref{eq Dphi} and~\eqref{eq DC Dphi}.
\end{proof}

It will be convenient to identify $S^-|_N \cong S^+|_N$ via $\sigma$ in~\eqref{eq DC Dphi}. Because signs and gradings are important in what follows, it is worth being explicit about details here.
We write
 $\tau:= \sigma|_{S^+|_N} \times 1\colon \widetilde{S}_C^+ \to \widetilde{S}_C^-$, and consider the isomorphism
\begin{equation} \label{eq S0++}
1 \oplus \tau\colon\
\widetilde{S}_C^+ \oplus \widetilde{S}_C^+ \xrightarrow{\cong}\widetilde{S}_C.
\end{equation}
The operator $D_N$ preserves the grading on $S|_N$; let $D_N^{\pm}$ be its restrictions to even and odd-graded sections, respectively. Because $D_N$ anticommutes with $\sigma$, we have
\begin{equation} \label{eq DN sigma}
D_N^- \circ \tau = -\tau \circ D_N^+.
\end{equation}
We will use the operators
\[
\widetilde{D}_C^{\pm} := \pm \frac{\partial}{\partial y} +hD_N^+\colon\ \Gamma^{\infty}\big(\widetilde{S}_C^+\big) \to \Gamma^{\infty}\big(\widetilde{S}_C^+\big).
\]

\begin{Lemma}\label{lem D0pm}
Under the isomorphism~\eqref{eq S0++}, the operator $\widetilde{D}_C$ corresponds to the operator
\[
\begin{pmatrix}
0 & \widetilde{D}_C^-\\
\widetilde{D}_C^+ & 0
\end{pmatrix}
\qquad \text{on} \quad \Gamma^{\infty}\big(\widetilde{S}_C^+ \oplus \widetilde{S}_C^+\big).
\]
\end{Lemma}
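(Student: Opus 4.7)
The proof is a direct computation verifying the matrix identity on a general section. I take $(s_1, s_2) \in \Gamma^{\infty}\big(\widetilde{S}_C^+ \oplus \widetilde{S}_C^+\big)$, apply $1 \oplus \tau$ to obtain the section $(s_1, \tau s_2)$ (in the even-plus-odd decomposition) in $\Gamma^{\infty}\big(\widetilde{S}_C\big)$, and then apply $\widetilde{D}_C = \sigma\bigl(\frac{\partial}{\partial y} + h D_N\bigr)$.

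Because $\frac{\partial}{\partial y} + h D_N$ preserves the $\Z/2$-grading, I compute its action componentwise. On the even part it produces $\widetilde{D}_C^+ s_1 = \bigl(\frac{\partial}{\partial y} + h D_N^+\bigr) s_1$. On the odd part, using that $\tau$ is fiberwise (hence commutes with $\frac{\partial}{\partial y}$) and the anticommutation $D_N^- \circ \tau = -\tau \circ D_N^+$ from~\eqref{eq DN sigma}, the operator produces $\tau \,\partial s_2/\partial y - \tau h D_N^+ s_2 = -\tau\, \widetilde{D}_C^- s_2$. Applying $\sigma$, which swaps the even and odd summands of $S|_N$, then yields the pair $\bigl(-\sigma|_{S^-|_N}\circ \tau\, \widetilde{D}_C^- s_2,\ \tau\, \widetilde{D}_C^+ s_1\bigr)$ in $\widetilde{S}_C^+ \oplus \widetilde{S}_C^-$. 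On the other hand, applying first the proposed block matrix and then $1 \oplus \tau$ to $(s_1, s_2)$ produces $\bigl(\widetilde{D}_C^- s_2,\ \tau\, \widetilde{D}_C^+ s_1\bigr)$. Agreement of the two expressions reduces to the identity $\sigma|_{S^-|_N} \circ \tau = -\Id$, equivalently $\sigma^2 = -\Id$.

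This last identity is the only bookkeeping point that needs justification. It holds in the motivating setting of Proposition~\ref{prop Dphi}, where $\sigma = c_0(e_p)$ and the Clifford relation $c_0(v)^2 = -\|v\|^2 \Id_S$ together with $\|e_p\|_{B_0} = 1$ gives $\sigma^2 = -\Id_S$; more generally it is implicit in the interpretation of $\sigma$ in~\eqref{eq DC Dphi} as Clifford multiplication by the outward unit normal direction. Aside from this sign check, the remainder of the argument is routine Leibniz-rule manipulation, so the main obstacle is purely tracking the signs through the composition with $\sigma$.
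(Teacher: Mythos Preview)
Your proof is correct and follows essentially the same route as the paper's: the paper packages the computation by introducing the matrix $\Sigma = \bigl(\begin{smallmatrix}0&-1\\1&0\end{smallmatrix}\bigr)$ and checking that $\sigma$ corresponds to $\Sigma$ under $1\oplus\tau$, then that $\frac{\partial}{\partial y}+hD_N$ corresponds to the diagonal matrix $\diag\bigl(\widetilde{D}_C^+,\,-\widetilde{D}_C^-\bigr)$, and multiplies; you carry out the same calculation section-by-section. You are in fact more explicit than the paper about the one nontrivial input, namely $\sigma|_{S^-|_N}\circ\tau=-\Id_{S^+|_N}$ (equivalently $\sigma^2=-\Id$ on $S^+|_N$): the paper uses this identity tacitly when asserting that the $\sigma$/$\Sigma$ square commutes, and your justification via the Clifford relation for $c_0(e_p)$ in the setting of Proposition~\ref{prop Dphi} is exactly what is needed.
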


\begin{proof}
Consider the vector bundle endomorphism
\[
\Sigma := \begin{pmatrix}0&-1\\1&\hphantom{-}0\end{pmatrix}\!\colon\ \widetilde{S}_C^+ \oplus \widetilde{S}_C^+\to \widetilde{S}_C^+ \oplus \widetilde{S}_C^+.
\]
Then the diagram{\samepage
\[
\xymatrix{
\widetilde{S}_C \ar[r]^-{\sigma} & \widetilde{S}_C \\
\widetilde{S}_C^+ \oplus \widetilde{S}_C^+ \ar[r]^-{\Sigma} \ar[u]^-{1\oplus \tau}& \widetilde{S}_C^+ \oplus \widetilde{S}_C^+\ar[u]^-{1\oplus \tau}
}
\]
commutes.}

Because of~\eqref{eq DN sigma} and the fact that
the operator $\frac{\partial}{\partial y}$ commutes with $\tau$,
\begin{align*}
\bigg(\frac{\partial}{\partial y}+hD_N\bigg) \circ (1\oplus \tau) &=
\begin{pmatrix}
\frac{\partial}{\partial y}+hD_N^+&0\\
0&\frac{\partial}{\partial y}+hD_N^-
 \end{pmatrix}
\begin{pmatrix}
1 & 0 \\
0 & \tau
\end{pmatrix}\\
&=
\begin{pmatrix}
1 & 0 \\
0 & \tau
\end{pmatrix}\begin{pmatrix}
\frac{\partial}{\partial y}+hD_N^+&0\\
0&\frac{\partial}{\partial y}-hD_N^+
 \end{pmatrix}\!.
\end{align*}
 We find that under the isomorphism~\eqref{eq S0++}, the operator $\widetilde{D}_C$ corresponds to
\begin{equation*}
\Sigma
\begin{pmatrix}
\frac{\partial}{\partial y}+hD_N^+&0\\
0&\frac{\partial}{\partial y}-hD_N^+
 \end{pmatrix}
 =\begin{pmatrix}
0 & -\frac{\partial}{\partial y}+hD_N^+\\
\frac{\partial}{\partial y}+hD_N^+& 0
 \end{pmatrix}\!.\tag*{\qed}
\end{equation*}
\renewcommand{\qed}{}
\end{proof}

For later use, we record expressions for the operators $\widetilde{D}_C^{\mp}\widetilde{D}_C^{\pm}$.
Consider the Laplace-type operators
\begin{equation*}
\Delta^{\pm} := -\frac{\partial^2}{\partial y^2} + {\rm e}^{-2 \varphi \circ \xi^{-1}} \big(D_N^+\big)^2 \pm \big(\varphi' \circ \xi^{-1}\big) {\rm e}^{-2 \varphi \circ \xi^{-1}} D_N^+\qquad
 \text{on}\quad \Gamma^{\infty}\big(\widetilde{S}_C\big) .
\end{equation*}

\begin{Lemma}\label{lem D02}
We have $\widetilde{D}_C^{\mp}\widetilde{D}_C^{\pm} = \Delta^{\pm}$.
\end{Lemma}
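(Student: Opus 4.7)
The proof is a direct calculation: expand the product $\widetilde{D}_C^{\mp}\widetilde{D}_C^{\pm} = (\mp \frac{\partial}{\partial y} + h D_N^+)(\pm \frac{\partial}{\partial y} + h D_N^+)$ and simplify. The plan is to exploit two basic facts. First, since $D_N^+$ is a differential operator purely in the $N$-directions while $h$ and $\frac{\partial}{\partial y}$ live on the $(0,\infty)$ factor, $D_N^+$ commutes with both $\frac{\partial}{\partial y}$ and $h$, so the only nontrivial commutator that arises is $\bigl[\frac{\partial}{\partial y}, h\bigr] = h'$. Second, $h = \mathrm{e}^{-\varphi \circ \xi^{-1}}$, and because $\xi'(x) = \mathrm{e}^{\varphi(x)}$ we have $(\xi^{-1})'(y) = \mathrm{e}^{-\varphi(\xi^{-1}(y))} = h(y)$, which lets us rewrite $h'$ cleanly.

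Carrying this out, the cross terms $\mp \frac{\partial}{\partial y}\circ (h D_N^+) \pm h D_N^+ \circ \frac{\partial}{\partial y}$ collapse to $\mp h' D_N^+$, and the $\frac{\partial^2}{\partial y^2}$ term gets the correct sign. The chain rule gives
\[
h' = -(\varphi' \circ \xi^{-1})\,(\xi^{-1})'\,\mathrm{e}^{-\varphi \circ \xi^{-1}} = -(\varphi' \circ \xi^{-1})\,\mathrm{e}^{-2\varphi \circ \xi^{-1}},
\]
so $\mp h' D_N^+ = \pm (\varphi' \circ \xi^{-1})\,\mathrm{e}^{-2\varphi \circ \xi^{-1}} D_N^+$, while the $(h D_N^+)^2$ term yields $h^2 (D_N^+)^2 = \mathrm{e}^{-2\varphi \circ \xi^{-1}}(D_N^+)^2$. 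Comparing with the definition of $\Delta^{\pm}$ gives the claim.

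There is no real obstacle here; the only point to be careful about is keeping the two signs straight and verifying that the sign produced by $h'$ matches the $\pm$ in front of the first-order term in $\Delta^{\pm}$. Everything else is routine, so the argument occupies only a few lines.
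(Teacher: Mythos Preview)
Your proof is correct and follows essentially the same approach as the paper's own proof: both expand the product directly, use that the only nontrivial commutator is $[\partial_y, h] = h'$, and then invoke the chain rule identity $(\varphi\circ\xi^{-1})' = (\varphi'\circ\xi^{-1})\,{\rm e}^{-\varphi\circ\xi^{-1}}$ (equivalently, your computation of $(\xi^{-1})' = h$) to match the first-order term in $\Delta^{\pm}$.
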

\begin{proof}
By a direct computation,
\[
\widetilde{D}_C^{\mp}\widetilde{D}_C^{\pm} =
-\frac{\partial^2}{\partial y^2} + {\rm e}^{-2\varphi \circ \xi^{-1}} \big(D_N^+\big)^2 \pm \big(\varphi \circ \xi^{-1}\big)' {\rm e}^{-\varphi\circ \xi^{-1}} D_N^+.
\]
The right-hand side equals $\Delta^{\pm}$, because
\begin{equation*}
 \big(\varphi \circ \xi^{-1}\big)' = \big(\varphi' \circ \xi^{-1}\big) {\rm e}^{-\varphi \circ \xi^{-1}}.\tag*{\qed}
\end{equation*}
\renewcommand{\qed}{}
\end{proof}

\subsection{APS-boundary conditions}

Analogously to the Sobolev spaces defined in Section~\ref{sec index thm general}, we use the APS-type projection~\eqref{eq def P+} to define the following Hilbert spaces. Here we identify $\widetilde{S}_C^- \cong \widetilde{S}_C^+$ via the map $\tau$, and use the
unitary isomorphism $T$ from Lemma~\ref{lem T isom}.
\begin{align*}
&W^k_{\widetilde{D}_C}\big(\widetilde{S}_C^{\pm}\big) :=T^{-1}\big(W^k_D(S_C^{\pm})\big),
\\
&W^1_{\widetilde{D}_C}\big(\widetilde{S}_C^+; P\big) := \big\{s \in W^1_{\widetilde{D}_C}\big(\widetilde{S}_C^+\big); P^+(s|_N) = 0\big\},
\\
&W^1_{\widetilde{D}_C}\big(\widetilde{S}_C^+;1- P\big) := \big\{s \in W^1_{\widetilde{D}_C}\big(\widetilde{S}_C^+\big); (1-P^+)(s|_N) = 0\big\},
\\
&W^2_{\widetilde{D}_C}\big(\widetilde{S}_C^+; P\big) := \big\{s \in W^2_{\widetilde{D}_C}\big(\widetilde{S}_C^+\big); P^+(s|_N) = 0, (1-P^+)\big(\widetilde{D}_C^+s|_N\big) = 0\big\},
\\
&W^2_{\widetilde{D}_C}\big(\widetilde{S}_C^+; 1-P\big) := \big\{s \in W^2_{\widetilde{D}_C}\big(\widetilde{S}_C^+\big); (1-P^+)(s|_N) = 0, P^+\big(\widetilde{D}_C^-s|_N\big) = 0\big\}.
\end{align*}
\big(Analogously to~\eqref{eq def WnD}, we use continuous restriction maps $W^1_{\widetilde{D}_C}\big(\widetilde{S}_C\big) \to L^2\big(S|_N\big)$.\big)
By Lem\-ma~\ref{lem T Dpsi}, we have the bounded operators
\begin{align}
&\widetilde{D}_C^+\colon\ W^1_{\widetilde{D}_C}\big(\widetilde{S}_C^+; P\big) \to L^2\big(\widetilde{S}_C^+\big), \label{eq D0+ P}\\
&\widetilde{D}_C^-\colon\ W^1_{\widetilde{D}_C}\big(\widetilde{S}_C^+; 1-P\big) \to L^2\big(\widetilde{S}_C^+\big),\label{eq D0- 1-P} \\
&\widetilde{D}_C^- \widetilde{D}_C^+\colon\ W^2_{\widetilde{D}_C}\big(\widetilde{S}_C^+; P\big) \to L^2\big(\widetilde{S}_C^+\big), \label{eq D0-D0+ P}\\
&\widetilde{D}_C^+ \widetilde{D}_C^-\colon\ W^2_{\widetilde{D}_C}\big(\widetilde{S}_C^+; 1-P\big) \to L^2\big(\widetilde{S}_C^+\big).\label{eq D0+D0- 1-P}
\end{align}

The results in the previous subsection lead to the following conclusion, which shows that the conditions of Theorem~\ref{thm index general} are equivalent to corresponding properties of the operator $\widetilde{D}_C$ in this setting.
\begin{Proposition}\label{prop DC D0}\quad
\begin{enumerate}\itemsep=0pt
\item[$(a)$] The operator $D$ is invertible at infinity in the sense of~\eqref{eq D inv infty} if and only if there are $b_0>0$ and $u\geq a$ such that for all $s \in \Gamma^{\infty}_c\big(\widetilde{S}_C\big)$ supported in $(u, \infty)$,
\[
\big\|\widetilde{D}_Cs\big\|_{L^2(\widetilde{S}_C, \vol_{B_N} \otimes {\rm d}y)} \geq b_0 \|s\|_{L^2(\widetilde{S}_C, \vol_{B_N} \otimes {\rm d}y)}.
\]
\item[$(b)$] The operator~\eqref{eq DC+} is invertible if and only if the operator~\eqref{eq D0+ P} is.
\item[$(c)$] The operator~\eqref{eq DC-} is invertible if and only if the operator~\eqref{eq D0- 1-P} is.
\item[$(d)$] The operator $D^-_C D_C^+$ on $L^2\big(S_C^+\big)$, with domain $W^2_D\big(S_C^+; P\big)$, is self-adjoint
 if and only if the operator $\widetilde{D}_C^- \widetilde{D}_C^+$ on $L^2\big(\widetilde{S}_C^+\big)$, with domain $W^2_{\widetilde{D}_C}\big(\widetilde{S}_C^+; P\big)$ is.
\item[$(e)$] The operator
the operator $D^+_C D_C^-$ on $L^2\big(S_C^-\big)$, with domain $W^2_D\big(S_C^-; 1-P\big)$
 is self-adjoint if and only if the operator
 $\widetilde{D}_C^+ \widetilde{D}_C^-$ on $L^2\big(\widetilde{S}_C^+\big)$, with domain $W^2_{\widetilde{D}_C}\big(\widetilde{S}_C^+; 1-P\big)$
 is.
\end{enumerate}
\end{Proposition}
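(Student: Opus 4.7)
The plan is to combine the unitary isomorphism $T$ of Lemma~\ref{lem T isom}, the intertwining relation $D_C \circ T = T \circ \widetilde{D}_C$ of Lemma~\ref{lem T Dpsi}, and the block decomposition of Lemma~\ref{lem D0pm} into a single $\Z/2$-graded unitary isomorphism
\[
\widehat{T}\colon\ L^2\big(\widetilde{S}_C^+\big) \oplus L^2\big(\widetilde{S}_C^+\big) \xrightarrow{\cong} L^2(S_C)
\]
under which $\widetilde{D}_C^{\pm}$ corresponds to $D_C^{\pm}$. Because unitary equivalence preserves norms, invertibility, and self-adjointness, each of the five statements then reduces to checking that $\widehat{T}$ maps the relevant Sobolev/APS domains on the $\widetilde{D}_C$-side bijectively onto their counterparts on the $D_C$-side.

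For the Sobolev spaces themselves, the correspondence $W^k_{\widetilde{D}_C}\big(\widetilde{S}_C^{\pm}\big) = T^{-1}\big(W^k_D\big(S_C^{\pm}\big)\big)$ is built into the definition. Near $x=a$ (equivalently $y=0$) the operator $T$ acts essentially as multiplication by the positive scalar $\Phi(a)$, so the restriction-to-$N$ map commutes with $\widehat{T}$ up to this scalar. The condition $P^+(s|_N)=0$ on $\widetilde{S}_C^+$ therefore transfers to the same condition on $S_C^+$; and via the $\tau$-identification the condition $(1-P^+)(s|_N)=0$ on $\widetilde{S}_C^+$ transfers to $(1-P^-)((\tau s)|_N) = 0$ on $S_C^-$, which is the required boundary condition by the very definition $P^- = \sigma_+ P^+ \sigma_+^{-1}$. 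The second-order conditions on $\widetilde{D}_C^{\pm} s|_N$ and $D_C^{\pm} s|_N$ match by the same argument, with one further use of $D_C \circ T = T \circ \widetilde{D}_C$.

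For part~(a), the remaining point is that ``supported outside a compact $Z \subset M$'' corresponds under $T$ (combined with the diffeomorphism $y = \xi(x)$ of $(a,\infty)$ with $(0,\infty)$) to ``supported in $N \times (u, \infty)$'' for some $u \geq a$. Any $G$-invariant compact $Z$ is contained in some enlargement $Z \cup (N \times (a, a_0])$, and its complement corresponds to $N \times (\xi(a_0), \infty)$; conversely the diffeomorphism $\xi$ sends $(u, \infty)$ back to such an enlargement. These families generate the same filtration of compactly supported sections, and unitarity of $T$ equates the $L^2$-norms on the two sides, so the invertibility-at-infinity conditions are equivalent.

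The work is essentially bookkeeping rather than analysis; the main obstacle is tracking grading conventions through the $\tau$-identification together with the sign in $D_N^- \tau = -\tau D_N^+$ from~\eqref{eq DN sigma}, so that the block form of Lemma~\ref{lem D0pm} meshes consistently with the APS projections $P^{\pm}$ on both sides. Once this is checked, parts~(b)--(e) follow at once from the fact that $\widehat{T}$ is a unitary equivalence of closed operators with matching domains, and part~(a) follows from the support correspondence above.
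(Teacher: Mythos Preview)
Your proposal is correct and follows essentially the same route as the paper: build the graded unitary from $T$ and the $\tau$-identification, invoke the intertwining relation of Lemma~\ref{lem T Dpsi} and the block form of Lemma~\ref{lem D0pm}, and then check that the APS boundary conditions transfer via $(Ts)|_N = \Phi(a)\,s|_N$ together with $P^- = \sigma_+ P^+ \sigma_+^{-1}$. Your treatment of part~(a) is in fact slightly more explicit than the paper's, which simply cites Lemmas~\ref{lem T isom} and~\ref{lem T Dpsi}.
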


\begin{proof}
Part (a) follows from Lemmas~\ref{lem T isom} and~\ref{lem T Dpsi}.

For any $s \in \Gamma^{\infty}\big(\widetilde{S}_C\big)$, we have
\[
(Ts)|_N = \Phi(a) s|_N,
\]
because we view $N$ as embedded into $\overline{C}$ as $N\times \{a\}$ and into $N \times [0,\infty)$ as $N \times \{0\}$. Therefore, for any such section, $P(s|_N) = 0$ if and only if $P (Ts|_N) = 0$, and similarly for $1-P$. Hence the operator $T$, together with~\eqref{eq S0++}, defines unitary isomorphisms
\begin{align}\label{eq T W2}
\begin{split}
&T\colon\ W^1_{\widetilde{D}_C}\big(\widetilde{S}_C^+; P\big) \xrightarrow{\cong} W^1_{D}\big(S_C^+; P\big), \\
&T \circ \tau \colon\ W^1_{\widetilde{D}_C}\big(\widetilde{S}_C^+;1- P\big) \xrightarrow{\cong}W^1_{D}\big(S_C^-;1- P\big), \\
&T\colon\ W^2_{\widetilde{D}_C}\big(\widetilde{S}_C^+; P\big) \xrightarrow{\cong}W^2_{D}\big(S_C^+; P\big),\\
&T\circ \tau \colon\ W^2_{\widetilde{D}_C}\big(\widetilde{S}_C^+; 1-P\big) \xrightarrow{\cong}W^2_{D}\big(S_C^-; 1-P\big).
\end{split}
\end{align}
In the second and fourth lines, we use the fact that by definition~\eqref{eq def P} of $P$, we have
\[
P|_{L^2(S^-|_N)} = \tau_+ P^+ \tau_+^{-1},
\]
so that the isomorphisms $T \circ \tau$ preserve the given boundary conditions.

Under the isomorphisms~\eqref{eq T W2}, the pairs of operators in parts (b)--(e) correspond to each other. Here we again used Lemmas~\ref{lem T isom} and~\ref{lem T Dpsi}, and also Lemma~\ref{lem D0pm}.
\end{proof}

\subsection{Lower bounds and invertibility}

Two kinds of invertibility at infinity of $D$ are assumed in Theorem~\ref{thm index general}: there is the condition~\eqref{eq D inv infty} on $D$, and invertibility of~\eqref{eq DC+} and~\eqref{eq DC-}. To verify these conditions in the context of manifolds with $\varphi$-cusps, we use Lemma~\ref{lem bd Delta} and Proposition~\ref{prop APS cusp} below.

The proof of the following lemma is the only place where we use the condition~\eqref{eq bound phi' b} in the definition of weakly admissible cusps.
\begin{Lemma}\label{lem bd Delta}
Suppose that $M$ has weakly admissible cusps, i.e., $\varphi$ has the properties in Definition~$\ref{def weakly adm}$.
Suppose that $D_N$ is invertible. Then there are $u,b_0>0$ such that for all $s \in \Gamma_c^{\infty}\big(\widetilde{S}_C)$ supported in $N \times (u,\infty)$,
\begin{equation} \label{eq bd Delta}
\big\| \widetilde{D}_C s \big\|_{L^2} \geq b_0 \|s \|_{L^2}.
\end{equation}
\end{Lemma}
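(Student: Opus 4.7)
The plan is to use the explicit form $\widetilde{D}_C = \sigma\bigl(\frac{\partial}{\partial y} + hD_N\bigr)$ with $h = {\rm e}^{-\varphi\circ\xi^{-1}}$, and to estimate $\|\widetilde{D}_C s\|^2$ from below by squaring and carefully handling the cross term via integration by parts on the $y$-variable, exploiting the fact that $D_N^2\geq b^2$ and both conditions in Definition~\ref{def weakly adm}.

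\textbf{Key steps.} First, since $\sigma$ is an isometric bundle endomorphism, $\|\widetilde{D}_Cs\|^2 = \|\partial_y s\|^2 + \|hD_N s\|^2 + 2\Real(\partial_y s, hD_N s)_{L^2}$. For $s \in \Gamma_c^\infty(\widetilde{S}_C)$ supported in $(u,\infty)$ for some $u>0$, integrating by parts in $y$ (no boundary terms because of compact support away from $y=0$), using that $D_N$ is self-adjoint on $N$ and commutes with $\partial_y$, and that $h$ is real-valued and only depends on $y$, gives
\[
2\Real(\partial_y s, hD_N s)_{L^2} = -(s, h' D_N s)_{L^2}.
\]
A direct computation yields $h' = -(\varphi'\circ\xi^{-1})h^2$. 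Under the weak admissibility assumption, $|\varphi'(x)|\leq b-\alpha$ for $x$ large, so for $y$ larger than some threshold $u$ we have $|h'(y)|\leq (b-\alpha)h(y)^2$.

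Next, I estimate the cross term. By Cauchy--Schwarz applied in the $y$-variable together with the pointwise fibrewise Cauchy--Schwarz $|(s,D_N s)_N|\leq \|s\|_N\|D_N s\|_N$,
\[
|(s, h' D_N s)_{L^2}| \leq (b-\alpha)\int h^2 \|s\|_N \|D_N s\|_N\, {\rm d}y \leq (b-\alpha)\|hs\|_{L^2}\,\|hD_N s\|_{L^2}.
\]
Since $D_N^2\geq b^2$, fibrewise $\|D_N s(\cdot,y)\|_N^2\geq b^2\|s(\cdot,y)\|_N^2$, hence $\|hD_N s\|_{L^2}\geq b\|hs\|_{L^2}$, so $\|hs\|_{L^2}\leq \tfrac{1}{b}\|hD_N s\|_{L^2}$, and
\[
|(s, h'D_N s)_{L^2}| \leq \tfrac{b-\alpha}{b}\|hD_N s\|_{L^2}^2 = \bigl(1-\tfrac{\alpha}{b}\bigr)\|hD_N s\|_{L^2}^2.
\]
Combining the three steps,
\[
\|\widetilde{D}_C s\|^2 \geq \|\partial_y s\|^2 + \tfrac{\alpha}{b}\|hD_N s\|_{L^2}^2 \geq \alpha b\,\|hs\|_{L^2}^2.
\]

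Finally, the first condition in Definition~\ref{def weakly adm} says $\varphi$ is bounded above by some constant $M$, so $h\geq {\rm e}^{-M}$ uniformly. Therefore $\|\widetilde{D}_Cs\|_{L^2}^2 \geq \alpha b\,{\rm e}^{-2M}\|s\|_{L^2}^2$, giving \eqref{eq bd Delta} with $b_0 = \sqrt{\alpha b}\,{\rm e}^{-M}$.

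\textbf{Main obstacle.} The delicate point is balancing the cross term against the $\|hD_N s\|^2$ term: one needs precisely the sharp bound $|\varphi'|<b$ (not just $|\varphi'|$ bounded) so that the positive $\|hD_N s\|^2$ term dominates what comes out of the cross term. This is exactly where the constant $\alpha > 0$ in the weak admissibility hypothesis enters, and it is also the only place in the proof where this hypothesis is used. The boundedness-from-above of $\varphi$ is then needed only at the last step to pass from $\|hs\|$ to $\|s\|$.
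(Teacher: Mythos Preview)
Your proof is correct and essentially identical to the paper's: both compute the square $\widetilde{D}_C^2 = -\partial_y^2 + h^2 D_N^2 \pm (\varphi'\circ\xi^{-1})h^2 D_N$ (you via expanding the quadratic form and integrating the cross term by parts, the paper by quoting Lemma~\ref{lem D02}), drop the nonnegative $-\partial_y^2$ term, and then use $|D_N|\geq b$, $|\varphi'|\leq b-\alpha$ and $\varphi\leq\beta$ to arrive at the same lower bound $b_0^2=\alpha b\,{\rm e}^{-2\beta}$. The only cosmetic difference is that the paper handles the cross term as an operator inequality (noting that $|D_N|$ and multiplication by $h$ are commuting self-adjoint operators) rather than via Cauchy--Schwarz; one notational quibble is that your symbol $M$ for the upper bound of $\varphi$ clashes with the manifold's name---the paper calls it $\beta$.
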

\begin{proof}
By Lemma~\ref{lem D02},
\begin{equation} \label{eq Delta lower bd}
\widetilde{D}_C^2 \geq {\rm e}^{-2 \varphi \circ \xi^{-1}} \big(D_N^2 - \big|\varphi' \circ \xi^{-1}\big| |D_N|\big)
= {\rm e}^{-2 \varphi \circ \xi^{-1}} |D_N| \bigl( |D_N| - \big|\varphi' \circ \xi^{-1}\big| \bigr).
\end{equation}

Because $D_N$ is invertible, there is a $b>0$ such that $D_N^2\geq b^2$. Because $M$ has weakly admissible cusps, there is an upper bound $\beta$ for $\varphi$, and there are
 $a'>0$ and $\alpha>0$ such that for all $x>a'$,
\[
 b-|\varphi'(x)|\geq \alpha.
\]
Now $|D_N| \geq b$, so on $N \times (\xi(a'), \infty)$,
\[
 |D_N| - \big|\varphi' \circ \xi^{-1}\big|\geq \alpha, \qquad \text{and}\qquad
{\rm e}^{-2 \varphi \circ \xi^{-1}} |D_N| \geq {\rm e}^{-2\beta} b.
\]
Hence, on $N \times (\xi(a'), \infty)$, the right-hand side of~\eqref{eq Delta lower bd} is greater than or equal to
\[
b_0^2 := \alpha b{\rm e}^{-2\beta}.
\]
Here we used that the various operators are self-adjoint and commute.
\end{proof}

\begin{Remark}
By a small adaptation of the proof of Lemma~\ref{lem bd Delta}, we can show that if $M$ has strongly admissible $\varphi$-cusps and $D_N$ is invertible, then for all $b_0>0$, there is a $u>0$ such that~\eqref{eq bd Delta} holds
 for all $s \in \Gamma_c^{\infty}\big(\widetilde{S}_C\big)$ supported in $N \times (u,\infty)$. By~\cite[Theorem~SD]{Anghel16} and Lemmas~\ref{lem T isom} and~\ref{lem T Dpsi}, this implies that $D$ has discrete spectrum. This is an analogous result to Proposition~\ref{prop discrete spec}.
\end{Remark}

\begin{Proposition}\label{prop APS cusp}
If $h$ has a positive lower bound, then the operators~\eqref{eq D0+ P} and~\eqref{eq D0- 1-P} are invertible.
\end{Proposition}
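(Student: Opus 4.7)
The plan is to diagonalise along $N$ and reduce the problem to an infinite family of one-dimensional ODEs. Because $N$ is compact and $D_N^+$ is invertible, it has a complete orthonormal basis of eigenfunctions, with eigenvalues $\mu$ satisfying $|\mu|\geq b$. This yields a Hilbert space decomposition
\[
L^2\big(\widetilde{S}_C^+\big) \cong \widehat{\bigoplus}_{\mu \in \spec(D_N^+)} L^2((0,\infty)) \otimes V_\mu,
\]
on which $\widetilde{D}_C^\pm$ acts block-diagonally as the scalar operator $A_\mu^\pm := \pm\frac{\rm d}{{\rm d}y} + \mu h$. Under this decomposition the APS conditions translate blockwise: $P^+(s|_{y=0})=0$ imposes the Dirichlet condition $f(0)=0$ exactly on the blocks with $\mu>0$ and nothing on the blocks with $\mu<0$, while $(1-P^+)(s|_{y=0})=0$ does the reverse.

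Next, I would solve each scalar problem $A_\mu^\pm f = \psi$ explicitly using the integrating factor ${\rm e}^{\pm\mu H(y)}$, with $H(y):=\int_0^y h(t)\,{\rm d}t$. In the Dirichlet case one integrates from $0$; in the ``free'' case, $L^2$-integrability singles out the antiderivative obtained by integrating from $y$ to $\infty$. Injectivity is immediate, since every nonzero homogeneous solution is a scalar multiple of ${\rm e}^{\mp\mu H}$, which either violates the Dirichlet condition or fails to lie in $L^2$: by the hypothesis $h\geq h_0>0$ we have $H(y)\to\infty$, so ${\rm e}^{\mp\mu H}$ is the one that decays, and in each of the four combinations this decaying homogeneous solution is the one that Dirichlet rules out.

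For surjectivity and a uniform norm bound, in all four sign combinations the resulting Green kernel is pointwise dominated by ${\rm e}^{-|\mu|h_0|y-t|}$ on the appropriate half-ray, because $H(y)-H(t)\geq h_0(y-t)$ for $y\geq t$. Young's (or Schur's) inequality then yields
\[
\big\|\big(A_\mu^\pm\big)^{-1}\psi\big\|_{L^2} \leq \frac{1}{|\mu|h_0}\|\psi\|_{L^2} \leq \frac{1}{bh_0}\|\psi\|_{L^2},
\]
uniformly in $\mu$. Assembling the blocks produces a bounded two-sided inverse of operator norm at most $(bh_0)^{-1}$. By construction $\widetilde{D}_C^\pm f = \psi \in L^2$ places $f$ in $W^1_{\widetilde{D}_C}\big(\widetilde{S}_C^+\big)$, and imposing the componentwise Dirichlet conditions ensures the correct APS boundary behaviour.

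The one slightly delicate step I expect is verifying that the trace at $y=0$ and the projection $P^+$ really do commute with this spectral decomposition. Specifically, one needs the trace map $W^1_{\widetilde{D}_C}\big(\widetilde{S}_C^+\big) \to L^2(S^+|_N)$, $s\mapsto s|_{y=0}$, to be continuous so that the condition $P^+(s|_{y=0})=0$ on the whole section is equivalent to $f_\mu(0)=0$ for every positive-eigenvalue component, and similarly with $1-P^+$. Once this is justified using the standard trace theorem for Dirac operators on the half-cylinder $N\times[0,\infty)$ --- which is essentially built into the definition~\eqref{eq def WnD} and the unitary identifications~\eqref{eq T W2} --- the block-wise inversion above assembles into the desired bounded inverses of~\eqref{eq D0+ P} and~\eqref{eq D0- 1-P}.
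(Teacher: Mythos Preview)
Your proposal is correct and follows essentially the same strategy as the paper: decompose along the eigenspaces of $D_N^+$, invert the scalar operators $\frac{{\rm d}}{{\rm d}y}+\mu h$ on $(0,\infty)$ by integrating from $0$ (when Dirichlet is imposed) or from $\infty$ (when it is not), and use the lower bound $h\geq h_0$ to obtain a uniform $L^2$ estimate that allows the blocks to be reassembled into a bounded inverse. The only cosmetic difference is that the paper packages the scalar analysis into a separate lemma and obtains the $L^2$ bound by comparison with the constant-$h$ case from \cite{APS1}, whereas you invoke Young's inequality directly; both give the same $|\mu|^{-1}h_0^{-1}$-type bound.
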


\begin{Lemma}\label{lem f g}
Let $\lambda \in \R$, $\zeta \in C^{\infty}_c(0,\infty)$ and $h \in C^{\infty}[0,\infty)$. For $u,v\geq 0$, define
\[
H_{\lambda}(u,v) := \exp\bigg( \lambda \int_u^v h(s)\, {\rm d}s \bigg).
\]
Define $f \in C^{\infty}[0,\infty)$ by
\begin{equation} \label{eq def f}
f(u) :=
\begin{cases}
\displaystyle\phantom{-}\int_0^u H_{\lambda}(u,v)\zeta(v)\, {\rm d}v & \text{if\quad $\lambda \geq 0$},
\\[4mm]
\displaystyle-\int_u^{\infty} H_{\lambda}(u,v)\zeta(v)\, {\rm d}v & \text{if\quad $\lambda <0$}.
\end{cases}
\end{equation}
Then
\begin{enumerate}\itemsep=0pt
\item[$(1)$] $f'+\lambda h f = \zeta$,
\item[$(2)$] if $\zeta = \tilde f'+\lambda h \tilde f$ for some $f \in C^{\infty}_c(0, \infty)$, then $f = \tilde f$,
\item[$(3)$] $f(0) = 0$ if $\lambda \geq 0$,
\item[$(4)$] if $h \geq \varepsilon>0$, then
$f \in L^2(0,\infty)$, and $|\lambda|\, \|f\|_{L^2} \leq \frac{2}{\varepsilon}\|\zeta\|_{L^2}$.
\end{enumerate}
\end{Lemma}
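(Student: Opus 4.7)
The lemma concerns the standard integrating-factor / variation-of-parameters solution of the first-order linear ODE $f' + \lambda h f = \zeta$, with the boundary behaviour chosen to match the sign of $\lambda$: vanishing at $0$ when $\lambda \geq 0$, and vanishing at $+\infty$ when $\lambda < 0$. My plan is to verify the four items directly from the definition \eqref{eq def f}.

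For (1) and (3) I would differentiate under the integral sign using the Leibniz rule. Since $\partial_u H_\lambda(u,v) = -\lambda h(u) H_\lambda(u,v)$ and $H_\lambda(u,u) = 1$, differentiating the variable (upper, resp.\ lower) limit produces the boundary contribution $\zeta(u)$, while differentiating the kernel produces $-\lambda h(u) f(u)$; in both sign cases this yields $f' = \zeta - \lambda h f$. Item (3) is immediate from the definition, since for $\lambda \geq 0$ the integral runs from $0$ to $0$.

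For (2), the difference $g := f - \tilde f$ solves the homogeneous equation $g' + \lambda h g = 0$, hence $g(u) = g(u_0)\exp\bigl(-\lambda\int_{u_0}^{u} h(s)\,ds\bigr)$ for any chosen $u_0$. When $\lambda \geq 0$, both $f(0) = 0$ (by (3)) and $\tilde f(0) = 0$ (since $\tilde f \in C^\infty_c(0,\infty)$), so take $u_0 = 0$. When $\lambda < 0$, the defining formula shows $f(u) = 0$ once $u$ exceeds $\sup \mathrm{supp}(\zeta)$, and $\tilde f$ likewise vanishes for large $u$; take $u_0$ large. Either way $g \equiv 0$.

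Item (4) is the only quantitative step. The key observation is that in the integrand of \eqref{eq def f}, the variable $v$ lies on the side of $u$ for which $\lambda\int_u^v h(s)\,ds \leq -|\lambda|\varepsilon |v - u|$ whenever $h \geq \varepsilon > 0$, so $H_\lambda(u,v) \leq \exp(-|\lambda|\varepsilon |u-v|)$ in both sign cases. Therefore $|f|$ is pointwise dominated by the convolution of $|\zeta|$ (extended by zero outside $(0,\infty)$) with the one-sided exponential kernel $e^{-|\lambda|\varepsilon |\cdot|}$, and Young's inequality for $L^1 \ast L^2 \to L^2$ gives
\[
\|f\|_{L^2} \leq \bigl\|e^{-|\lambda|\varepsilon u}\bigr\|_{L^1(0,\infty)} \|\zeta\|_{L^2} = \frac{1}{|\lambda|\varepsilon}\|\zeta\|_{L^2},
\]
which even beats the stated bound $|\lambda|\,\|f\|_{L^2} \leq \frac{2}{\varepsilon}\|\zeta\|_{L^2}$ (the factor $2$ presumably provides slack used later). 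The case $\lambda = 0$ of (4) is trivial. I do not anticipate a real obstacle: the lemma is bookkeeping for a first-order linear ODE, and the $L^2$-estimate reduces to the elementary convolution inequality once the exponential bound on $H_\lambda$ is noted.
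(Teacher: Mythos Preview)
Your proof is correct. Items (1)--(3) match the paper's argument exactly (the paper only says ``the first two points follow from computations, the third point is immediate'', and you have supplied those computations). For item (4), your route differs slightly from the paper's: you bound $H_{\lambda}(u,v) \leq {\rm e}^{-|\lambda|\varepsilon |u-v|}$ and then apply Young's inequality $L^1 * L^2 \to L^2$ directly, obtaining the sharper constant $1/\varepsilon$. The paper instead introduces the auxiliary function $f^{\lambda'}$ built with the pure exponential kernel ${\rm e}^{\lambda'(v-u)}$, invokes the estimate $|\lambda'|\,\|f^{\lambda'}\|_{L^2} \leq 2\|\zeta\|_{L^2}$ from the proof of \cite[Proposition~2.5]{APS1}, and compares $|f|$ to $|f^{\varepsilon\lambda}|$ pointwise after decomposing $\zeta$ into nonnegative and nonpositive parts. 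Your approach is more self-contained (no external citation, no sign decomposition) and yields a better constant; the paper's approach makes explicit the link with the classical APS construction, which is the conceptual point of the lemma.
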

\begin{proof}
The first two points follow from computations, the third point is immediate from the definition of $f$.

For the fourth point, note that because $h \geq \varepsilon$,
\begin{equation} \label{eq est H lam}
H_{\lambda}(u, v) \leq {\rm e}^{\varepsilon \lambda(v-u)},
\end{equation}
{\samepage if either
\begin{itemize}\itemsep=0pt
\item $\lambda \geq 0$ and $v \leq u$, or
\item $\lambda<0$ and $v \geq u$.
\end{itemize}}
For $\lambda' \in \R$, define
\[
f^{\lambda'}(u) :=
\begin{cases}
\displaystyle\phantom{-}\int_0^u {\rm e}^{\lambda'(v-u)} \zeta(v)\, {\rm d}v & \text{if\quad $\lambda' \geq 0$},
\\[4mm]
\displaystyle-\int_u^{\infty} {\rm e}^{\lambda'(v-u)}\zeta(v)\, {\rm d}v & \text{if\quad $\lambda' <0$}.
\end{cases}
\]
 It is shown in the proof of Proposition 2.5 in \cite{APS1} that $f^{\lambda'} \in L^2(0,\infty)$, and $|\lambda'|\|f^{\lambda'}\|_{L^2}\leq 2\|\zeta\|_{L^2}$.

First suppose that $\lambda \geq 0$ and $\zeta \geq 0$.
Then~\eqref{eq est H lam} implies that $|f| \leq \big|f^{\varepsilon \lambda}\big|$, so $f\in L^2(0,\infty)$, and
\[
\varepsilon \lambda \|f\|_{L^2} \leq \varepsilon \lambda \big\|f^{\varepsilon \lambda}\big\|_{L^2} \leq 2\|\zeta\|_{L^2}.
\]
If $\zeta$ also takes negative values, we decompose it as a difference of two nonnegative functions, and reach the same conclusion.

Now suppose that $\lambda < 0$ and $\zeta \leq 0$. Then again,~\eqref{eq est H lam} implies that $|f| \leq \big|f^{\varepsilon \lambda}\big|$, and
\[
\varepsilon |\lambda|\, \|f\|_{L^2} \leq \varepsilon |\lambda|\, \big\|f^{\varepsilon \lambda}\big\|_{L^2} \leq 2\|\zeta\|_{L^2}.
\]
This also extends to $\zeta$ with positive values by a decomposition of $\zeta$ into nonpositive functions.
\end{proof}

For all $\lambda \in \spec\big(D_N^+\big)$, let $\big\{\varphi_{\lambda}^1, \dots, \varphi_{\lambda}^{m_{\lambda}} \big\}$ be an orthonormal basis of $\ker\big(D_N^+ - \lambda\big)$. Then
\begin{equation} \label{eq basis phi lam}
 \big\{\varphi_{\lambda}^j; {\lambda \in \spec\big(D_N^+\big)}, j=1, \dots, m_{\lambda} \big\}
 \end{equation}
 is a Hilbert basis of $L^2\big(S^+|_N\big)$ of eigensections of $D_N^+$.

\begin{proof}[Proof of Proposition~\ref{prop APS cusp}]
We prove the claim for~\eqref{eq D0+ P}, the proof for~\eqref{eq D0- 1-P} is similar.

Let $\zeta \in \Gamma^{\infty}_c\big(\widetilde{S}_C^+\big)$. Write
\begin{equation} \label{eq sum g}
\zeta = \sum_{\lambda \in \spec(D_N^+)} \sum_{j=1}^{m_{\lambda}} \zeta_{\lambda}^j \otimes \varphi_{\lambda}^j,
\end{equation}
where $\zeta_{\lambda}^j \in C^{\infty}_c(0,\infty)$. For every $\lambda \in \spec\big(D_N^+\big)$ and $j$, define the function $f_{\lambda}^j$ on $(0,\infty)$ as the function $f$ in~\eqref{eq def f}, with $\zeta$ replaced by $\zeta_{\lambda}^j$. We claim that the series
\begin{equation} \label{eq sum lambda f phi}
 \sum_{\lambda \in \spec(D_N^+)} \sum_{j=1}^{m_{\lambda}} f_{\lambda}^j \otimes \varphi_{\lambda}^j
\end{equation}
converges to an element $f \in W^1_{\widetilde{D}_C}\big(\widetilde{S}_C^+; P\big)$. We set $Q\zeta := f$ for any such $\zeta$.
Then the first point in Lemma~\ref{lem f g} implies that $\widetilde{D}_C^+ f = \zeta$. The second point in Lemma~\ref{lem f g} implies that $Q\widetilde{D}_C^+ \tilde f = \tilde f$ for any $\tilde f \in W^1_{\widetilde{D}_C}\big(\widetilde{S}_C^+; P\big)$.
The third point in Lemma~\ref{lem f g} implies that $P(f|_{N \times \{0\}}) = 0$.

To prove convergence of~\eqref{eq sum lambda f phi}, we note that by the first point in Lemma~\ref{lem f g},
\[
\big\| f_{\lambda}^j \otimes \varphi_{\lambda}^j\big\|_{W^1_{\widetilde{D}_C}}^2 = \big\|f_{\lambda}^j\big\|_{L^2}^2+\big\|\zeta_{\lambda}^j\big\|_{L^2}^2.
\]
Since $|\lambda|$ is bounded away from zero, the fourth point in Lemma~\ref{lem f g} implies that
\begin{equation} \label{eq est flam glam}
\big\| f_{\lambda}^j \otimes \varphi_{\lambda}^j\big\|_{W^1_{\widetilde{D}_C}} \leq B \big\|\zeta_{\lambda}^j\big\|_{L^2}
\end{equation}
for a constant $B>0$ independent of $\lambda$. So convergence of~\eqref{eq sum g} in $L^2\big(\widetilde{S}_C^+\big)$ implies convergence of~\eqref{eq sum lambda f phi} in $W^1_{\widetilde{D}_C}\big(\widetilde{S}_C^+\big)$.

We have just seen that \smash{$\|Q\zeta\|_{W^1_{\widetilde{D}_C}} \leq B \|\zeta\|_{L^2}$}, thus
 $Q$ extends continuously to an inverse of~\eqref{eq D0+ P}.
\end{proof}

\subsection{Adjoints}\label{sec adjoints}

\begin{Proposition}\label{prop adjoints DC}
If $h$ has a positive lower bound, then
the two operators~\eqref{eq D0+ P} and~\eqref{eq D0- 1-P} are each other's adjoints.
\end{Proposition}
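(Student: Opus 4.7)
My approach is to first establish the inclusion $\widetilde{D}_C^- \subseteq \bigl(\widetilde{D}_C^+\bigr)^*$ by integration by parts in the $y$-variable, and then upgrade it to equality using the invertibility of both operators supplied by Proposition~\ref{prop APS cusp}. Since $h$ is real-valued and only depends on $y$ while $D_N^+$ is self-adjoint on $L^2(S^+|_N)$, the zeroth-order piece $hD_N^+$ common to $\widetilde{D}_C^{\pm} = \pm\partial_y + hD_N^+$ contributes equally to $\bigl(\widetilde{D}_C^+ s, t\bigr)_{L^2}$ and $\bigl(s, \widetilde{D}_C^- t\bigr)_{L^2}$, so for $s \in W^1_{\widetilde{D}_C}\bigl(\widetilde{S}_C^+;P\bigr)$ and $t \in W^1_{\widetilde{D}_C}\bigl(\widetilde{S}_C^+;1-P\bigr)$,
\[
\bigl(\widetilde{D}_C^+ s, t\bigr)_{L^2} - \bigl(s, \widetilde{D}_C^- t\bigr)_{L^2} = \int_0^{\infty} \tfrac{\rm d}{{\rm d}y} (s(\cdot,y), t(\cdot,y))_{L^2(S^+|_N)}\,{\rm d}y = -(s|_{y=0}, t|_{y=0})_{L^2(S^+|_N)},
\]
provided the boundary term at $y=\infty$ vanishes. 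Under the APS conditions, $s|_{y=0}$ lies in $\ker P^+$, which is spanned by the negative eigenspaces of $D_N^+$ (since $D_N$ is invertible), while $t|_{y=0}$ lies in $\im P^+ = L^2\bigl(S^+|_N\bigr)_{>0}$; these spectral subspaces are orthogonal, so the right-hand side is zero and we obtain $\widetilde{D}_C^- \subseteq \bigl(\widetilde{D}_C^+\bigr)^*$.

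For the reverse inclusion, Proposition~\ref{prop APS cusp} provides a bounded two-sided inverse to~\eqref{eq D0+ P}, so when viewed as a densely defined unbounded operator on $L^2\bigl(\widetilde{S}_C^+\bigr)$ with domain $W^1_{\widetilde{D}_C}\bigl(\widetilde{S}_C^+;P\bigr)$, the operator $\widetilde{D}_C^+$ is closed and bijective. The general fact that closed bijective operators have closed bijective adjoints (with $\bigl(\bigl(\widetilde{D}_C^+\bigr)^*\bigr)^{-1} = \bigl(\bigl(\widetilde{D}_C^+\bigr)^{-1}\bigr)^*$) implies that $\bigl(\widetilde{D}_C^+\bigr)^*$ is bijective onto $L^2\bigl(\widetilde{S}_C^+\bigr)$, and by the same proposition so is $\widetilde{D}_C^-$. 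Given $t \in \dom\bigl(\bigl(\widetilde{D}_C^+\bigr)^*\bigr)$, surjectivity of $\widetilde{D}_C^-$ yields $t' \in W^1_{\widetilde{D}_C}\bigl(\widetilde{S}_C^+;1-P\bigr)$ with $\widetilde{D}_C^- t' = \bigl(\widetilde{D}_C^+\bigr)^* t$; combining with the inclusion already proved gives $\bigl(\widetilde{D}_C^+\bigr)^* t = \bigl(\widetilde{D}_C^+\bigr)^* t'$, and injectivity of $\bigl(\widetilde{D}_C^+\bigr)^*$ forces $t = t' \in \dom\bigl(\widetilde{D}_C^-\bigr)$.

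The main technical hurdle is justifying the integration by parts identity, and in particular the vanishing of the boundary contribution at $y=\infty$, for arbitrary elements of the two $W^1_{\widetilde{D}_C}$ domains rather than smooth compactly supported sections. A clean way to handle this is to decompose $s$ and $t$ in the Hilbert basis~\eqref{eq basis phi lam} of $D_N^+$-eigensections, reducing each pairing to one-dimensional Sobolev theory on $(0,\infty)$: each coefficient function lies in $L^2(0,\infty)$ with an $L^2$ weak derivative (after absorbing the $h\lambda$ term, which is bounded since $h$ has a positive lower bound and $|\lambda|$ is bounded below), so the product of coefficients for $s$ and $t$ is absolutely continuous and lies in $L^1(0,\infty)$, forcing its limit at $y=\infty$ to exist and equal zero. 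The continuity of the trace at $y=0$ is already built into the definition~\eqref{eq def WnD} of the admissible domains via the restriction map $W^1_{\widetilde{D}_C}\bigl(\widetilde{S}_C\bigr) \to L^2(S|_N)$, and termwise summation against the orthonormal basis is legitimate by Parseval.
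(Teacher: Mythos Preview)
Your overall strategy coincides with the paper's: establish the symmetric relation $\bigl(\widetilde{D}_C^+ s, t\bigr)_{L^2} = \bigl(s, \widetilde{D}_C^- t\bigr)_{L^2}$ for $s \in W^1_{\widetilde{D}_C}\bigl(\widetilde{S}_C^+;P\bigr)$ and $t \in W^1_{\widetilde{D}_C}\bigl(\widetilde{S}_C^+;1-P\bigr)$ via integration by parts and the APS boundary conditions, and then invoke the invertibility from Proposition~\ref{prop APS cusp} to upgrade to the full adjoint equality. The paper passes to the bounded inverses and shows $\bigl((\widetilde{D}_C^+)^{-1}\bigr)^* = (\widetilde{D}_C^-)^{-1}$; your bijectivity-of-closed-operators argument is an equivalent standard variant.

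There is, however, an error in your justification of the integration by parts. You assert that each coefficient function $f_\lambda$ of $s$ in the eigenbasis~\eqref{eq basis phi lam} has an $L^2$ weak derivative because ``the $h\lambda$ term is bounded since $h$ has a positive lower bound and $|\lambda|$ is bounded below''. This is backwards: a positive \emph{lower} bound on $h$ gives no upper bound, and in fact for strongly admissible cusps $h = {\rm e}^{-\varphi\circ\xi^{-1}} \to \infty$, so $h\lambda f_\lambda$ need not lie in $L^2(0,\infty)$ and $f_\lambda'$ need not be globally $L^2$. The repair is that you do not need $f_\lambda' \in L^2$ at all. Writing $F_\lambda$, $G_\lambda$ for the $\lambda$-components of $\widetilde{D}_C^+ s$, $\widetilde{D}_C^- t$, one has $f_\lambda' = F_\lambda - h\lambda f_\lambda$ and $\bar g_\lambda' = -\bar G_\lambda + h\lambda \bar g_\lambda$ in $L^2_{\rm loc}$, and in $(f_\lambda \bar g_\lambda)' = f_\lambda'\bar g_\lambda + f_\lambda \bar g_\lambda'$ the $h\lambda$ terms cancel exactly, leaving $(f_\lambda \bar g_\lambda)' = F_\lambda \bar g_\lambda - f_\lambda \bar G_\lambda \in L^1(0,\infty)$ by Cauchy--Schwarz. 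Since also $f_\lambda \bar g_\lambda \in L^1(0,\infty)$, its limit at infinity exists and must vanish, and the rest of your argument goes through.
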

\begin{proof}
We claim that for all $s_P \in W^1_{\widetilde{D}_C}\big(\widetilde{S}_C^+; P\big)$ and $s_{1-P} \in W^1_{\widetilde{D}_C}\big(\widetilde{S}_C^+; 1-P\big)$,
\begin{equation}
\label{eq adjoints DC 1}
\big(\widetilde{D}_C^+ s_P, s_{1-P}\big)_{L^2} = \big(s_P, \widetilde{D}_C^- s_{1-P}\big)_{L^2}.
\end{equation}
Indeed, suppose that
\[
s_P = s_P^N \otimes f_{P},\qquad s_{1-P} = s_{1-P}^N \otimes f_{1-P},
\]
for $s_P^N, s_{1-P}^N \in L^2\big(S^+|_N\big)$ and $f_P, f_{1-P} \in L^2[0, \infty)$ such that $s_P \in W^1_{\widetilde{D}_C}\big(\widetilde{S}_C^+; P\big)$ and $s_{1-P} \in W^1_{\widetilde{D}_C}\big(\widetilde{S}_C^+; 1-P\big)$. Then by self-adjointness of $D_N^+$ and integration by parts, the left-hand side of~\eqref{eq adjoints DC 1} equals
\begin{equation} \label{eq adjoints DC 2}
\big(s_P^N, D_N^+ s_{1-P}^N\big)_{L^2(S^+|_N)} \bigg(
\big(f_P \overline{f_{1-P}}\big)|_{0}^{\infty} + \int_0^{\infty} f_P(x)\bigl(-\overline{f_{1-P}'(x)} +h(x) \overline{f_{1-P}(x)}\bigr)\, {\rm d}x\bigg).
\end{equation}
If we further decompose the expressions with respect to eigenspaces of $D_N^+$, and use that the components of $f_P$ for positive eigenvalues equal zero at zero, and the components of $f_{1-P}$ for negative eigenvalues equal zero at zero, then we find that the components for all eigenvalues of $f_P \overline{f_{1-P}}$ are zero at zero. So the term $\big(f_P \overline{f_{1-P}}\big)|_{0}^{\infty} $ in~\eqref{eq adjoints DC 2} vanishes, and~\eqref{eq adjoints DC 2} equals the right-hand side of~\eqref{eq adjoints DC 1}.

Now if $\sigma_P := \widetilde{D}_C^+ s_P$ and $\sigma_{1-P} := \widetilde{D}_C^- s_{1-P}$, then
\eqref{eq adjoints DC 1} becomes
\[
\big( \sigma_P, \big(\widetilde{D}_C^-\big)^{-1} \sigma_{1-P}\big)_{L^2} = \big( \big(\widetilde{D}_C^+\big)^{-1}\sigma_P, \sigma_{1-P}\big)_{L^2}.
\]
The sections $\sigma_P$ and $\sigma_{1-P}$ of this type are dense in $L^2\big(\widetilde{S}_C^+\big)$. And the inverse operators $\big(\widetilde{D}_C^+\big)^{-1}$ and $\big(\widetilde{D}_C^-\big)^{-1}$ are bounded, so we find that
\[
\big(\big(\widetilde{D}_C^+\big)^{-1}\big)^* = \big(\widetilde{D}_C^-\big)^{-1}.
\]
This implies that $\big(\widetilde{D}_C^+\big)^* = \widetilde{D}_C^-$.
\end{proof}

\begin{Proposition}\label{prop DC2 sa}
If $h$ has a positive lower bound, then the operators
\eqref{eq D0-D0+ P} and~\eqref{eq D0+D0- 1-P}
are self-adjoint.
\end{Proposition}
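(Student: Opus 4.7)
The plan is to reduce both claims to von Neumann's classical theorem: if $T$ is a densely defined closed operator on a Hilbert space, then $T^{*}T$ is self-adjoint on the domain $\{x\in\dom(T):Tx\in\dom(T^{*})\}$. I take $T:=\widetilde{D}_C^+$ with domain $W^1_{\widetilde{D}_C}(\widetilde{S}_C^+;P)$. Proposition \ref{prop adjoints DC} already gives that $T^{*}=\widetilde{D}_C^-$ on $W^1_{\widetilde{D}_C}(\widetilde{S}_C^+;1-P)$, and Proposition \ref{prop APS cusp} together with the open mapping theorem makes $T$ boundedly invertible, so in particular closed. Closedness can also be read off directly: the graph norm of $T$ on its domain is equivalent to the $W^1_{\widetilde{D}_C}$-norm, and the APS-type boundary condition $P^{+}(s|_N)=0$ is preserved under convergence via the continuous trace $W^1_{\widetilde{D}_C}(\widetilde{S}_C^+)\to L^2(S|_N)$.

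Applying von Neumann's theorem yields self-adjointness of $T^{*}T=\widetilde{D}_C^-\widetilde{D}_C^+$ on
\[
\dom(T^{*}T)=\bigl\{s\in W^1_{\widetilde{D}_C}(\widetilde{S}_C^+;P):\widetilde{D}_C^+s\in W^1_{\widetilde{D}_C}(\widetilde{S}_C^+;1-P)\bigr\}.
\]
The remaining task is to identify this domain with $W^2_{\widetilde{D}_C}(\widetilde{S}_C^+;P)$. The conjunction $s\in W^1_{\widetilde{D}_C}(\widetilde{S}_C^+)$ and $\widetilde{D}_C^+s\in W^1_{\widetilde{D}_C}(\widetilde{S}_C^+)$ is, by the very definition of the Sobolev spaces $W^k_{\widetilde{D}_C}$ via iterated applications of $\widetilde{D}_C$, the same as $s\in W^2_{\widetilde{D}_C}(\widetilde{S}_C^+)$; the two surviving boundary conditions $P^{+}(s|_N)=0$ and $(1-P^{+})(\widetilde{D}_C^+s|_N)=0$ then match the definition of $W^2_{\widetilde{D}_C}(\widetilde{S}_C^+;P)$ verbatim.

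For the second operator I simply swap roles. Because $T$ is closed, $T^{**}=T$, so applying von Neumann's theorem to $T^{*}$ gives self-adjointness of $(T^{*})^{*}T^{*}=TT^{*}=\widetilde{D}_C^+\widetilde{D}_C^-$, and the analogous bookkeeping identifies its natural domain with $W^2_{\widetilde{D}_C}(\widetilde{S}_C^+;1-P)$: the two regularity conditions give $s\in W^2_{\widetilde{D}_C}(\widetilde{S}_C^+)$, and the boundary conditions read off as $(1-P^{+})(s|_N)=0$ and $P^{+}(\widetilde{D}_C^-s|_N)=0$.

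The only real obstacle is the last identification of the abstract von Neumann domain with the explicitly defined $W^2$-space with mixed APS boundary conditions; this is essentially a definition chase, but one must be careful that the graph norm iterates consistently between $W^1$ and $W^2$ and that the trace maps used to impose the spectral projection conditions are well-defined on each level. All of this is built into the definitions in Section \ref{sec index thm general} together with the continuous restriction $W^1_{\widetilde{D}_C}(\widetilde{S}_C)\to L^2(S|_N)$, so no additional analytic input is required beyond Propositions \ref{prop APS cusp} and \ref{prop adjoints DC}.
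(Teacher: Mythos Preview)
Your argument is correct, and in fact the paper explicitly anticipates it: the Remark immediately following the proof of Proposition~\ref{prop DC2 sa} notes that the result can also be deduced from Proposition~\ref{prop adjoints DC} via \cite[Theorem~X.25]{RSII}, which is precisely von Neumann's theorem on self-adjointness of $T^{*}T$ for closed densely defined $T$. So you have reconstructed the alternative the authors had in mind.

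The paper's own proof proceeds differently. Rather than invoking von Neumann's theorem and then identifying the abstract domain with $W^2_{\widetilde{D}_C}(\widetilde{S}_C^+;P)$, it starts from the concrete $W^2$-domain, shows directly via Proposition~\ref{prop APS cusp} that the composition $\widetilde{D}_C^-\widetilde{D}_C^+\colon W^2_{\widetilde{D}_C}(\widetilde{S}_C^+;P)\to L^2(\widetilde{S}_C^+)$ is boundedly invertible, and then checks that this bounded inverse is self-adjoint using Proposition~\ref{prop adjoints DC}. Self-adjointness of an unbounded operator with bounded self-adjoint inverse is then immediate. The virtue of the paper's route is that the domain is fixed from the outset, so the step you flag as ``the only real obstacle''---matching the von Neumann domain to the explicitly defined $W^2$-space---never arises. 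Your route is cleaner conceptually and shorter, but does rely on the identification $\{s\in W^1:\widetilde{D}_C^+s\in W^1\}=W^2$, which is not quite a pure definition chase given that the $W^k_{\widetilde{D}_C}$-spaces are defined by pullback under $T$ from restrictions of global Sobolev spaces on $M$; it holds, but the surjectivity of $\widetilde{D}_C^+\colon W^2(\widetilde{S}_C^+;P)\to W^1(\widetilde{S}_C^+;1-P)$ (which the paper also asserts) is what actually closes that loop.
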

\begin{proof}
We prove the claim for~\eqref{eq D0-D0+ P}, the proof for~\eqref{eq D0+D0- 1-P} is similar.

The operator~\eqref{eq D0+ P} is invertible by Proposition~\ref{prop APS cusp}. It maps the subspace
\[
W^2_{\widetilde{D}_C}\big(\widetilde{S}_C^+; P\big) \subset W^1\big(\widetilde{S}_C^+; P\big)
\]
onto
\[
W^1\big(\widetilde{S}_C^+; 1-P\big) \subset L^2\big(\widetilde{S}_C^+\big).
\]
 So we obtain an invertible operator
\begin{equation*} 
\widetilde{D}_C^+\colon\ W^2_{\widetilde{D}_C}\big(\widetilde{S}_C^+; P\big) \to W^1\big(\widetilde{S}_C^+; 1-P\big).
\end{equation*}
So, again by Proposition~\ref{prop APS cusp}, the composition
\begin{equation} \label{eq DCplus DCmin}
W^2_{\widetilde{D}_C}\big(\widetilde{S}_C^+; P\big) \xrightarrow{\widetilde{D}_C^+} W^1\big(\widetilde{S}_C^+; 1-P\big)\xrightarrow{\widetilde{D}_C^-} L^2\big(\widetilde{S}_C^+\big)
\end{equation}
is invertible, with bounded inverse. The adjoint of the bounded operator
\[
\big(\widetilde{D}_C^+\big)^{-1}\colon\
W^1_{\widetilde{D}_C}\big(\widetilde{S}_C^+; 1-P\big)\to W^2_{\widetilde{D}_C}\big(\widetilde{S}_C^+; P\big)
\]
is the restriction of the adjoint of
\[
\big(\widetilde{D}_C^+\big)^{-1}\colon\ L^2\big(\widetilde{S}_C^+\big)\to W^1_{\widetilde{D}_C}\big(\widetilde{S}_C^+; P\big)
\]
to $W^2_{\widetilde{D}_C}\big(\widetilde{S}_C^+; P\big)$.
By Proposition~\ref{prop adjoints DC}, this is
\[
 \big(\widetilde{D}_C^-\big)^{-1}|_{W^2_{\widetilde{D}_C}(\widetilde{S}_C^+; P) }.
\]
Again applying Proposition~\ref{prop adjoints DC}, we find that the inverse of~\eqref{eq DCplus DCmin} has adjoint
\[
W^2_{\widetilde{D}_C}\big(\widetilde{S}_C^+; P\big) \xrightarrow{ (\widetilde{D}_C^-)^{-1}} W^1_{\widetilde{D}_C}\big(\widetilde{S}_C^+; 1-P\big)\xrightarrow{(\widetilde{D}_C^+)^{-1}} L^2\big(\widetilde{S}_C^+\big).
\]
Hence, as maps from $W^2_{\widetilde{D}_C}\big(\widetilde{S}_C^+; P\big)$ to $L^2\big(\widetilde{S}_C^+\big)$,
\[
\big(\big(\widetilde{D}_C^- \widetilde{D}_C^+\big)^{-1}\big)^* = \big(\widetilde{D}_C^- \widetilde{D}_C^+\big)^{-1}.
\]
This implies that $\big(\widetilde{D}_C^- \widetilde{D}_C^+\big)^* = \widetilde{D}_C^- \widetilde{D}_C^+$.
\end{proof}
\begin{Remark}
Proposition~\ref{prop DC2 sa} can also be deduced from Proposition~\ref{prop adjoints DC} via~\cite[Theorem~X.25]{RSII}.
\end{Remark}

If $M$ has weakly admissible $\varphi$-cusps, then $\varphi$ has an upper bound, so $h = {\rm e}^{-\varphi \circ \xi}$ has a positive lower bound. Therefore,
Proposition~\ref{prop ind cusps cond} follows from
Proposition~\ref{prop DC D0}, Lemma~\ref{lem bd Delta} and Propositions~\ref{prop APS cusp} and~\ref{prop DC2 sa}.

\subsection{Cusp contributions}\label{sec cusp contr}

To prove Proposition~\ref{prop ind cusp contr}, we express the contribution from infinity in Theorem~\ref{thm index general} in terms of the operator $\widetilde{D}_C$. Recall the definition of the function $\Phi$ in~\eqref{eq def Phi}.
\begin{Lemma}\label{lem lambda lambda tilde}
Suppose that the operator~\eqref{eq D0-D0+ P} is self-adjoint, and
let $\tilde \lambda_s^{P}$ be the Schwartz kernel of ${\rm e}_P^{-s \widetilde{D}_C^- \widetilde{D}_C^+} \widetilde{D}_C^-$. Then
for all $n,n' \in N$, $x,x' \in (a, \infty)$ and $s>0$,
\[
\lambda_s^{P}(n, x; n', x') = {\rm e}^{(1-p)\varphi(x')} \frac{\Phi(x)}{\Phi(x')} \tilde \lambda_s^{P}(n, \xi(x); n', \xi(x')).
\]
\end{Lemma}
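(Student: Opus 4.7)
The plan is to use the unitary intertwining provided by Lemmas~\ref{lem T isom} and~\ref{lem T Dpsi} to identify ${\rm e}_P^{-sD_C^-D_C^+}D_C^-$ with a conjugate of ${\rm e}_P^{-s\widetilde{D}_C^-\widetilde{D}_C^+}\widetilde{D}_C^-$ by $T$, and then to translate this operator identity into the stated identity of Schwartz kernels through a direct change of variables. First I would combine the intertwining $D_C\circ T=T\circ\widetilde{D}_C$ with the fact, established in the proof of Proposition~\ref{prop DC D0}, that $T$ (composed with $1\oplus\tau$ where appropriate) carries the APS-boundary Sobolev spaces on the $\widetilde{S}_C$-side bijectively onto their counterparts on the $S_C$-side. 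This gives a unitary equivalence between the self-adjoint operator $D_C^-D_C^+$ on $W^2_D(S_C^+;P)$ and $\widetilde{D}_C^-\widetilde{D}_C^+$ on $W^2_{\widetilde{D}_C}(\widetilde{S}_C^+;P)$, so functional calculus for the heat semigroup, together with the intertwining of $D_C^-$ and $\widetilde{D}_C^-$, delivers
\[
{\rm e}_P^{-sD_C^-D_C^+}D_C^- \;=\; T\,{\rm e}_P^{-s\widetilde{D}_C^-\widetilde{D}_C^+}\widetilde{D}_C^-\,T^{-1}.
\]

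Next I would unwind this operator identity pointwise to extract the kernels. Since $(Tg)(n,x)=\Phi(x)g(n,\xi(x))$ and $(T^{-1}f)(n,y)=f(n,\xi^{-1}(y))/\Phi(\xi^{-1}(y))$, evaluating the right-hand side at $(n,x)$ yields a double integral against $\tilde\lambda_s^P(n,\xi(x);n',y')$ with respect to the product measure $\vol_{B_N}\otimes{\rm d}y'$. The substitution $y'=\xi(x')$ introduces the Jacobian $\xi'(x')={\rm e}^{\varphi(x')}$, whereas the defining identity
\[
\bigl({\rm e}_P^{-sD_C^-D_C^+}D_C^-f\bigr)(n,x)=\int \lambda_s^P(n,x;n',x')f(n',x')\,{\rm e}^{p\varphi(x')}\,{\rm d}n'\,{\rm d}x'
\]
carries the full Riemannian density ${\rm e}^{p\varphi(x')}\vol_{B_N}\otimes{\rm d}x'$. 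Matching integrands forces the discrepancy ${\rm e}^{(1-p)\varphi(x')}$ in front of $\tilde\lambda_s^P$, while the scaling parts of $T$ and $T^{-1}$ supply the ratio $\Phi(x)/\Phi(x')$; together these reproduce exactly the formula claimed.

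The proof has no genuine obstacle beyond careful bookkeeping. The only mild subtlety worth flagging is the identification $1\oplus\tau\colon\widetilde{S}_C^+\oplus\widetilde{S}_C^+\xrightarrow{\cong}\widetilde{S}_C$ of Lemma~\ref{lem D0pm}, which is needed to make sense of the operators $\widetilde{D}_C^\pm$ and of the APS-type subspaces on the $\widetilde{S}_C$-side; because $\tau$ is a grading-reversing isometry it contributes no numerical factor, and so can be transported transparently through the change-of-variables computation.
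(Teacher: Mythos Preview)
Your proposal is correct and follows essentially the same route as the paper: both arguments conjugate ${\rm e}_P^{-sD_C^-D_C^+}D_C^-$ by $T$ using Lemmas~\ref{lem T isom} and~\ref{lem T Dpsi} together with the boundary-preserving isomorphisms from Proposition~\ref{prop DC D0}, and then unwind the resulting operator identity as an integral-kernel identity via the substitution $y'=\xi(x')$ with Jacobian ${\rm e}^{\varphi(x')}$ against the density ${\rm e}^{p\varphi(x')}$. Your explicit remark on the role of $1\oplus\tau$ is a harmless elaboration that the paper leaves implicit.
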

\begin{proof}
The kernel $ \lambda_s^{P}$ is defined with respect to the Riemannian density ${\rm e}^{p\varphi}\, {\rm d}n \, {\rm d}x$ on $C$, whereas~$\tilde \lambda_s^{P}$ is defined with respect to the Riemannian density $dn \, {\rm d}y$ on $N \times (0, \infty)$. Furthermore, Lemma~\ref{lem T Dpsi} and the third isomorphism on~\eqref{eq T W2} imply that
\[
{\rm e}_P^{-s D_C^- D_C^+}D_C^- =
 T \circ {\rm e}_P^{-s \widetilde{D}_C^- \widetilde{D}_C^+} \widetilde{D}_C^-\circ T^{-1}.
\]
We find that for all $n \in N$ and $x' \in (a,\infty)$ and $s \in \Gamma^{\infty}_c(S_C)$,
\begin{align*}
\begin{split}
&\int_N \int_a^{\infty} \lambda_s^{P}(n, x; n', x') s(n', x') {\rm e}^{p\varphi(x')}\, {\rm d}x' \, {\rm d}n' \\
&\qquad{}= \bigl({\rm e}_P^{-s D_C^- D_C^+}D_C^-s \bigr)(n, x)
= T \bigl({\rm e}_P^{-s \widetilde{D}_C^- \widetilde{D}_C^+} \widetilde{D}_C^- T^{-1}s \bigr)(n, x)
\\
&\qquad{}= \Phi(x) \int_N \int_0^{\infty} \tilde \lambda_s^{P}(n, \xi(x); n', y') \frac{1}{\Phi(\xi^{-1}(y'))} s(n', \xi^{-1}(y'))
\, {\rm d}n'\, {\rm d}y'.
\end{split}
\end{align*}
By a substitution $x' = \xi^{-1}(y')$, the latter integral equals
\[
 \Phi(x) \int_N \int_a^{\infty} \tilde \lambda_s^{P}(n, \xi(x); n', \xi(x')) \frac{1}{\Phi(x')} s(n', x') {\rm e}^{\varphi(x')}
\, {\rm d}n'\, {\rm d}x'.
\]
Here we used that $\xi' = {\rm e}^{\varphi}$.
\end{proof}

 Recall the choice of the Hilbert basis~\eqref{eq basis phi lam} of $L^2\big(S^+|_N\big)$ of eigensections of $D_N^+$. Let $\rho^{\lambda, \pm}$ and $\theta^{\lambda, \pm}_{\nu}$ be as $\rho$ and $\theta_{\nu}$ in Theorem~\ref{thm spec decomp}, with $q = q^{\pm}_{\lambda}$ as in~\eqref{eq def q lambda}.
\begin{Lemma}\label{lem kernel APS}
For all $s>0$, the Schwartz kernel $\tilde \lambda_s^{P}$ in Lemma~$\ref{lem lambda lambda tilde}$ equals
\begin{align}
\sum_{\lambda >0} \sum_{j=1}^{m_{\lambda}}&\bigg(\int_{\R}
{\rm e}^{-s\nu} \theta^{\lambda, +}_{\nu}\otimes \bigg( \frac{\rm d}{{\rm d}y}+\lambda {\rm e}^{-\varphi \circ \xi^{-1}}\bigg) \theta^{\lambda, +}_{\nu} {\rm d}\rho^{\lambda, +}(\nu) \bigg)\otimes \big(\varphi_{\lambda}^j \otimes \varphi_{\lambda}^j \big)\nonumber
\\
&+ \sum_{\lambda <0}
 \sum_{j=1}^{m_{\lambda}}
\bigg(\int_{\R}
{\rm e}^{-s\nu} \biggl( - \frac{\rm d}{{\rm d}y}+\lambda {\rm e}^{-\varphi \circ \xi^{-1}}\biggr)\theta^{\lambda, -}_{\nu}\otimes \theta^{\lambda, -}_{\nu} {\rm d}\rho^{\lambda, -}(\nu) \bigg)\otimes \big(\varphi_{\lambda}^j \otimes \varphi_{\lambda}^j \big).
\label{eq kernel cusp contr}
\end{align}
Here we identify $S^+_N \cong \big(S^+_N\big)^*$ using the metric, so we view $\varphi_{\lambda}^j \otimes \varphi_{\lambda}^j$ as a section of $S^+_N \boxtimes \big(S^+_N\big)^*$.
\end{Lemma}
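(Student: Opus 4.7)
The plan is to diagonalise everything with respect to the Hilbert basis $\{\varphi_\lambda^j\}$ from~\eqref{eq basis phi lam} of $L^2(S^+|_N)$. On each summand $L^2(0,\infty) \otimes \varphi_\lambda^j$ of $L^2\big(\widetilde{S}_C^+\big)$, the operator $D_N^+$ acts as the scalar $\lambda$, so $\widetilde{D}_C^\pm$ restricts to $\pm\frac{\rm d}{{\rm d}y} + \lambda h$, and by Lemma~\ref{lem D02} the products $\widetilde{D}_C^\mp \widetilde{D}_C^\pm$ restrict to the Sturm--Liouville operators $\Delta_\lambda^\pm = -\frac{{\rm d}^2}{{\rm d}y^2} + q_\lambda^\pm$ on $(0,\infty)$. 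The first step is to translate the APS-type boundary conditions into conditions on the scalar coefficient $f$ in $f \otimes \varphi_\lambda^j$. For $\lambda>0$ the first condition in the definition of $W^2_{\widetilde{D}_C}\big(\widetilde{S}_C^+;P\big)$ forces the Dirichlet condition $f(0)=0$, while the second is automatic because $(1-P^+)\varphi_\lambda^j=0$; symmetrically, for $\lambda<0$ the first condition in $W^2_{\widetilde{D}_C}\big(\widetilde{S}_C^+;1-P\big)$ forces $f(0)=0$ and the second is automatic. Hence the relevant one-dimensional operators ($\Delta_\lambda^+$ for $\lambda>0$, $\Delta_\lambda^-$ for $\lambda<0$) are precisely the Dirichlet Sturm--Liouville operators of Theorem~\ref{thm spec decomp}, and their heat kernels equal
\[
k^{\lambda,\pm}_s(y,y') = \int_{\R} {\rm e}^{-s\nu}\,\theta^{\lambda,\pm}_\nu(y)\,\theta^{\lambda,\pm}_\nu(y')\,{\rm d}\rho^{\lambda,\pm}(\nu),
\]
as follows from applying functional calculus through the unitary generalised Fourier transform $\cF_{q_\lambda^\pm}$.

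For the $\lambda>0$ part of the kernel I would compute directly. On the $\lambda$-summand $\widetilde{D}_C^-|_\lambda = -\frac{\rm d}{{\rm d}y}+\lambda h(y)$ is composed on the right of the heat operator, so integration by parts in the second variable $y'$ replaces it by its formal transpose $\frac{\rm d}{{\rm d}y'}+\lambda h(y')$ acting on $k_s^{\lambda,+}(y,y')$. The boundary term at $y'=\infty$ vanishes against compactly supported test sections, and the term at $y'=0$ vanishes because of the Dirichlet condition $\theta^{\lambda,+}_\nu(0)=0$. This yields exactly the integrand of the first sum in~\eqref{eq kernel cusp contr}.

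For $\lambda<0$ a direct computation is obstructed by the fact that the APS condition on $\widetilde{D}_C^-\widetilde{D}_C^+$ becomes the Robin-type condition $f'(0)+\lambda h(0)f(0)=0$ rather than Dirichlet, so the eigenfunctions involved are not the $\theta^{\lambda,+}_\nu$ of Theorem~\ref{thm spec decomp}. Instead I would invoke the standard functional-calculus intertwining
\[
e_P^{-s\widetilde{D}_C^-\widetilde{D}_C^+}\,\widetilde{D}_C^- \;=\; \widetilde{D}_C^-\,e_{1-P}^{-s\widetilde{D}_C^+\widetilde{D}_C^-}
\]
on $W^1_{\widetilde{D}_C}\big(\widetilde{S}_C^+;1-P\big)$. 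This is the identity $f(TT^*)T = T f(T^*T)$ applied to $f(\nu)={\rm e}^{-s\nu}$ and the closed operator $T=\widetilde{D}_C^-$, whose adjoint is $\widetilde{D}_C^+$ by Proposition~\ref{prop adjoints DC}. On the $\lambda<0$ summand, $\widetilde{D}_C^+\widetilde{D}_C^-$ becomes $\Delta_\lambda^-$ with Dirichlet boundary at $0$, so its heat kernel is $k^{\lambda,-}_s$; applying $\widetilde{D}_C^-|_\lambda$ to this kernel in the $y$-variable produces the integrand of the second sum in~\eqref{eq kernel cusp contr}. Assembling both sums over the basis elements $\varphi_\lambda^j$ reassembles the full kernel $\tilde\lambda_s^P$.

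The main technical obstacle is to justify interchanging differentiation in $y$ or $y'$ with the spectral integrals $\int(\cdot)\,{\rm d}\rho^{\lambda,\pm}(\nu)$ in the weakly admissible case, where the measure may have a nontrivial continuous part and the kernels exist only distributionally. The standard remedy is to test both sides of each equality against smooth compactly supported sections, and use unitarity of $\cF_{q_\lambda^\pm}$ (Theorem~\ref{thm spec decomp}\,(a)) to reduce every manipulation to a dense subspace on which the formal calculation is unambiguous.
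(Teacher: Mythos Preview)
Your argument is correct and arrives at the same computation as the paper, but the packaging of the key step differs. The paper invokes \cite[Proposition~3.5]{HW21a} to obtain the global decomposition
\[
{\rm e}_P^{-s\widetilde{D}_C^-\widetilde{D}_C^+}\widetilde{D}_C^- \;=\; {\rm e}_F^{-s\widetilde{D}_C^-\widetilde{D}_C^+}\widetilde{D}_C^-\,P \;+\; \widetilde{D}_C^-\,{\rm e}_F^{-s\widetilde{D}_C^+\widetilde{D}_C^-}(1-P),
\]
with both heat operators taken for the Friedrichs extensions (hence Dirichlet on each $\lambda$-summand), and then reads off the two sums in~\eqref{eq kernel cusp contr} by applying $\widetilde{D}_C^-$ to the appropriate kernel variable. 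You instead reconstruct this decomposition from scratch: for $\lambda>0$ you observe directly that the APS domain restricts to the Dirichlet domain for $\Delta_\lambda^+$, so ${\rm e}_P$ and ${\rm e}_F$ coincide there; for $\lambda<0$ you invoke the abstract identity $f(TT^*)T=Tf(T^*T)$ with $T=\widetilde{D}_C^-$ (whose adjoint is identified in Proposition~\ref{prop adjoints DC}) to pass to ${\rm e}_{1-P}^{-s\widetilde{D}_C^+\widetilde{D}_C^-}$, which again restricts to the Dirichlet operator for $\Delta_\lambda^-$. The two routes are logically equivalent---your intertwining plus the eigenspace-by-eigenspace boundary analysis is precisely what the cited proposition encodes---but yours is self-contained, while the paper's is shorter at the cost of an external reference. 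Your remark about justifying differentiation under the spectral integral via testing against $C_c^\infty$ is the right attitude; the paper handles this implicitly through the smoothness of the kernel established in \cite[Lemma~4.7]{HW21a}.
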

\begin{proof}
We extend the projection~\eqref{eq def P+} to a projection
\[
P \colon\ L^2\big(\widetilde{S}_C^+\big) \cong L^2\big(S^+|_N\big) \otimes L^2(0,\infty)
 \xrightarrow{P \otimes 1} L^2(S^+|_N)_{>0} \otimes L^2 (0,\infty)\hookrightarrow L^2\big(\widetilde{S}_C^+\big).
\]
Then \cite[Proposition~3.5]{HW21a} states that
\begin{equation} \label{eq decomp eP eF}
{\rm e}_P^{-s\widetilde{D}_C^-\widetilde{D}_C^+}\widetilde{D}_C^- = {\rm e}_F^{-s\widetilde{D}_C^-\widetilde{D}_C^+}\widetilde{D}_C^- P + \widetilde{D}_C^-{\rm e}_F^{-s\widetilde{D}_C^+\widetilde{D}_C^-}(1-P).
\end{equation}

 For $s>0$, let ${\rm e}_F^{-s \widetilde{D}_C^{\mp} \widetilde{D}_C^{\pm}}$ be the heat operator for the Friedrichs extension of
\[
\widetilde{D}_C^{\mp} \widetilde{D}_C^{\pm}\colon\ \Gamma^{\infty}_c\big(\widetilde{S}_C^+\big) \to L^2\big(\widetilde{S}_C^+\big).
\]
Let $\kappa_s^{F, \pm}$ be its Schwartz kernel.
By~\eqref{eq Delta lambda},~\eqref{eq def q lambda} and Lemma~\ref{lem D02}, the restriction of $\widetilde{D}_C^{\mp} \widetilde{D}_C^{\pm}$ to $\ker\big(D_N^+ - \lambda\big) \otimes L^2(0,\infty)$ equals $\Delta_{q_{\lambda}^{\pm}}$. So by Theorem~\ref{thm spec decomp}, the Schwartz kernel $\kappa_{\lambda, s}^{F, \pm}$ of the restriction of ${\rm e}_F^{-s \widetilde{D}_C^{\mp} \widetilde{D}_C^{\pm}}$ to $\ker\big(D_N^+ - \lambda\big) \otimes L^2(0, \infty)$ is
\[
\kappa_{\lambda, s}^{F, \pm} =
\sum_{j=1}^{m_{\lambda}}\bigg(\int_{\R}{\rm e}^{-s\nu} \theta^{\lambda, \pm}_{\nu}\otimes \theta^{\lambda, \pm}_{\nu} \, {\rm d}\rho^{\lambda, \pm}(\nu)\bigg) \otimes \big(\varphi_{\lambda}^j \otimes \varphi_{\lambda}^j \big).
\]
\big(Note that $\sum_{j=1}^{m_{\lambda}} \varphi_{\lambda}^j \otimes \varphi_{\lambda}^j$ is the identity operator on $\ker\big(D_N^+ - \lambda\big)$.\big)
The Schwartz kernel of $\widetilde{D}_C^- {\rm e}_F^{-s \widetilde{D}_C^{+}\widetilde{D}_C^-}$ equals $\widetilde{D}_C^-$ applied to the first entry of $\kappa_s^{F, -}$, so its restriction to $\ker\big(D_N^+ - \lambda\big) \otimes L^2([0, \infty))$ is
\begin{equation} \label{eq D kappa F lambda}
\sum_{j=1}^{m_{\lambda}}\bigg(\int_{\R}
{\rm e}^{-s\nu} \biggl( - \frac{\rm d}{{\rm d}y}+\lambda {\rm e}^{-\varphi \circ \xi^{-1}}\biggr) \theta^{\lambda, -}_{\nu}\otimes \theta^{\lambda, -}_{\nu} \, {\rm d}\rho^{\lambda, -}(\nu)\bigg) \otimes \big(\varphi_{\lambda}^j \otimes \varphi_{\lambda}^j \big).
\end{equation}
The Schwartz kernel of $ {\rm e}_F^{-s \widetilde{D}_C^- \widetilde{D}_C^{+} }\widetilde{D}_C^{-}$ equals the adjoint $\widetilde{D}_C^+$ of $\widetilde{D}_C^-$ applied to the second entry of $\kappa_s^{F, +}$, so its restriction to $\ker\big(D_N^+ - \lambda\big) \otimes L^2(0, \infty)$ is
\begin{equation} \label{eq kappa F D lambda}
\sum_{j=1}^{m_{\lambda}}\bigg(\int_{\R}
{\rm e}^{-s\nu} \theta^{\lambda, +}_{\nu}\otimes \bigg( \frac{\rm d}{{\rm d}y}+\lambda {\rm e}^{-\varphi \circ \xi^{-1}}\bigg) \theta^{\lambda, +}_{\nu} \, {\rm d}\rho^{\lambda, +}(\nu)\bigg) \otimes \big(\varphi_{\lambda}^j \otimes \varphi_{\lambda}^j \big).
\end{equation}
The claim follows from~\eqref{eq decomp eP eF} and the expressions~\eqref{eq D kappa F lambda} and~\eqref{eq kappa F D lambda} for the relevant Schwartz kernels on eigenspaces of $D_N^+$.
\end{proof}

\begin{Lemma}\label{lem kernel on diagonal}
In the situation of Lemma~$\ref{lem lambda lambda tilde}$, we have for all $a'>a$,
\begin{gather}
\int_N \tr\big( g \lambda_s^{P}\big(g^{-1}n, a'; n, a'\big) \big) {\rm d}n
 = {\rm e}^{(1-p)\varphi(a')}\sum_{\lambda \in \spec(D_N^+)}\sgn(\lambda)\tr(g|_{\ker(D_N^+ - \lambda)}) \nonumber
 \\ \qquad
 {}\times
\int_{\R}
{\rm e}^{-s\nu} \theta^{|\lambda|, +}_{\nu} (\xi(a')) \bigg( \bigg( \frac{\rm d}{{\rm d}y}+|\lambda| {\rm e}^{-\varphi \circ \xi^{-1}}\bigg) \theta^{|\lambda|, +}_{\nu} \bigg)(\xi(a'))\,{\rm d}\rho^{|\lambda|, +}(\nu) .
\label{eq kernel on diagonal}
\end{gather}
\end{Lemma}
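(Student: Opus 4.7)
The plan is to combine Lemmas~\ref{lem lambda lambda tilde} and~\ref{lem kernel APS} and then carry out the $N$-integration against the $g$-action. Setting $x = x' = a'$ in Lemma~\ref{lem lambda lambda tilde} collapses the $\Phi(x)/\Phi(x')$ factor and leaves the prefactor $e^{(1-p)\varphi(a')}$ in front of $\tilde\lambda_s^P(n,\xi(a');n',\xi(a'))$. Substituting the expression~\eqref{eq kernel cusp contr} for $\tilde\lambda_s^P$ splits the result into contributions from positive and negative eigenvalues $\lambda$ of $D_N^+$.

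Next I would handle the $N$-integration. The $N$-dependence of each term sits entirely in the factor $\varphi_\lambda^j\otimes\varphi_\lambda^j$, so
\[
\int_N \tr\bigl(g\,\varphi_\lambda^j(g^{-1}n)\otimes\overline{\varphi_\lambda^j(n)}\bigr)\,dn
\]
is, by orthonormality of the basis~\eqref{eq basis phi lam}, the matrix entry of $g$ in the $j$-th basis vector on $\ker(D_N^+-\lambda)$. Summing over $j=1,\dots,m_\lambda$ produces exactly $\tr(g|_{\ker(D_N^+-\lambda)})$, and the integral commutes with the $\int_\R\cdots d\rho$ and the ${\rm d}/{\rm d}y$ because these act only on the $(0,\infty)$-variables.

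The key identification that unifies the positive and negative $\lambda$ sums is the observation that the potentials~\eqref{eq def q lambda} satisfy $q_{-\mu}^- = q_\mu^+$ for $\mu>0$: indeed,
\[
q_{-\mu}^-(y) = (-\mu)\bigl((-\mu)-\varphi'\bigr)e^{-2\varphi}\circ\xi^{-1} = \mu(\mu+\varphi')e^{-2\varphi}\circ\xi^{-1} = q_\mu^+(y).
\]
By uniqueness of the solutions of~\eqref{eq Dirichlet}, this forces $\theta_\nu^{\lambda,-} = \theta_\nu^{|\lambda|,+}$ and $d\rho^{\lambda,-}=d\rho^{|\lambda|,+}$ whenever $\lambda<0$. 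Then for $\lambda<0$ the differential operator $-\tfrac{d}{dy}+\lambda e^{-\varphi\circ\xi^{-1}}$ appearing in the negative-eigenvalue term in~\eqref{eq kernel cusp contr} becomes $-\bigl(\tfrac{d}{dy}+|\lambda|e^{-\varphi\circ\xi^{-1}}\bigr)$ acting on $\theta_\nu^{|\lambda|,+}$, which is exactly the positive-$\lambda$ integrand times $-1$.

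Assembling the two cases gives precisely the formula~\eqref{eq kernel on diagonal} with the sign factor packaged as $\sgn(\lambda)$. I do not expect any serious obstacle here; the argument is a bookkeeping computation. The only point that needs care is justifying that evaluation at $y=y'=\xi(a')$ and the $N$-integration may be interchanged with the $\int_\R d\rho^{|\lambda|,+}(\nu)$ and the eigenfunction series in $\lambda$, but this is standard given the smoothness of $\tilde\lambda_s^P$ for $s>0$ guaranteed by~\cite[Lemma~4.7]{HW21a}, plus the fact that $N$ is compact and hence the sum over eigenvalues converges absolutely on smooth data after the heat operator is applied.
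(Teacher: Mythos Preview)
Your proposal is correct and follows essentially the same route as the paper's proof: combine Lemmas~\ref{lem lambda lambda tilde} and~\ref{lem kernel APS}, use the identity $q_{-\mu}^- = q_{\mu}^+$ to rewrite the negative-$\lambda$ terms via $\theta_\nu^{|\lambda|,+}$ and ${\rm d}\rho^{|\lambda|,+}$ with the sign absorbed into $\sgn(\lambda)$, and then carry out the $N$-integral using $\sum_j (g\cdot\varphi_\lambda^j,\varphi_\lambda^j)_{L^2} = \tr(g|_{\ker(D_N^+-\lambda)})$. The only cosmetic difference is that the paper performs the $\pm$ unification before integrating over $N$, and it notes explicitly that on the diagonal it does not matter that in~\eqref{eq kernel cusp contr} the differential operator acts on the first tensor factor for $\lambda<0$ and on the second for $\lambda>0$; you might add a sentence making that point explicit.
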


\begin{proof}
It follows directly from~\eqref{eq def q lambda} that for all $\lambda \in \R$,
\[
q^{+}_{\lambda} = q^{-}_{-\lambda}.
\]
This implies that, for all $\lambda \in \R$ and $\nu \in \C$, with notation as in~\eqref{eq Delta lambda} and below,
\[
\Delta^+_{\lambda}= \Delta^-_{-\lambda},\qquad
\theta^{\lambda, +}_{\nu} = \theta^{-\lambda,-}_{\nu},\qquad
\rho^{\lambda, +} = \rho^{-\lambda, -}.
\]
The last two relations imply in particular that for all $\lambda<0$,
\begin{equation*} 
\theta^{\lambda, -}_{\nu} = \theta^{|\lambda|, +}_{\nu},\qquad
\rho^{\lambda, -} = \rho^{|\lambda|, +},\qquad
 -\frac{\rm d}{{\rm d}y}+\lambda {\rm e}^{-\varphi \circ \xi^{-1}}= \sgn(\lambda) \bigg( \frac{\rm d}{{\rm d}y}+|\lambda| {\rm e}^{-\varphi \circ \xi^{-1}}\bigg).
\end{equation*}
These equalities, together with Lemmas~\ref{lem lambda lambda tilde} and~\ref{lem kernel APS} imply that for all $n \in N$,
\begin{align*}
g \lambda_s^{P}\big(g^{-1}n, a'; n, a'\big) = {\rm e}^{(1-p)\varphi(a')}
\sum_{\lambda >0} \sum_{j=1}^{m_{\lambda}} &\sgn(\lambda)
\bigg(\int_{\R}
{\rm e}^{-s\nu} \theta^{|\lambda|, +}_{\nu} (\xi(a')) \bigg( \frac{\rm d}{{\rm d}y}+|\lambda| {\rm e}^{-\varphi \circ \xi^{-1}}\bigg)
\\
& \times\theta^{|\lambda|, +}_{\nu} (\xi(a'))\, {\rm d}\rho^{\lambda, +}(\nu) \bigg)
\big(g \varphi_{\lambda}^j (g^{-1}n) \otimes \varphi_{\lambda}^j (n) \big).
\end{align*}
This equality and
\[
\sum_{j=1}^{m_{\lambda}}
\int_{N} \tr\big(g\varphi_{\lambda}^j\big(g^{-1}n\big) \otimes \varphi_{\lambda}^j(n)\big) \, {\rm d}n =
\sum_{j=1}^{m_{\lambda}} \big(g\cdot \varphi_{\lambda}^j, \varphi_{\lambda}^j\big)_{L^2} = \tr\big(g|_{\ker(D_N^+ - \lambda)}\big)
\]
together
imply~\eqref{eq kernel on diagonal}.
\end{proof}

\begin{proof}[Proof of Proposition~\ref{prop ind cusp contr}.]
By Proposition~\ref{prop Dphi}, the function $f_1$ in~\eqref{eq D on U} equals ${\rm e}^{-\varphi}$ in our situation. So it
 follows from Lemma~\ref{lem kernel on diagonal} that for all $a'>a$
 \begin{align}
 A_{g}(D_C, a') = - &{\rm e}^{-p \varphi(a')} \int_{0}^{\infty}
\sum_{\lambda \in \spec(D_N^+)}\sgn(\lambda)\tr\big(g|_{\ker(D_N^+ - \lambda)}\big)\nonumber
\\
&\times\int_{\R}{\rm e}^{-s\nu} \theta^{|\lambda|, +}_{\nu} (\xi(a')) \bigg( \bigg( \frac{\rm d}{{\rm d}y}+|\lambda| {\rm e}^{-\varphi \circ \xi^{-1}}\bigg) \theta^{|\lambda|, +}_{\nu} \bigg)(\xi(a'))\,{\rm d}\rho^{|\lambda|, +}(\nu) \, {\rm d}s\nonumber
 \\
 =-&\frac{1}{2}\eta_g^{\varphi}\big(D_N^+, a'\big).
\label{eq comp Ag}
 \end{align}
 In particular, because $A_{g}(D_C, a')$ converges by Theorem~\ref{thm index general}, so does $\eta_g^{\varphi}\big(D_N^+, a'\big)$.

Vanishing of $\eta^{\varphi}\big(D_N^+, a'\big)$ when $D_N^+$ has $g$-symmetric spectrum around zero follows directly from the definition~\eqref{eq cusp cpt}: then the term corresponding to $\lambda \in \spec(D_N)$ equals minus the term corresponding to $-\lambda$.
\end{proof}

Proposition~\ref{prop discrete spec} was proved at the start of this section, and Proposition~\ref{prop ind cusps cond} was proved at the end of Section~\ref{sec adjoints}. So Propositions~\ref{prop ind cusps cond}--\ref{prop discrete spec} are proved, and the proof of Theorem~\ref{thm index cusps} is complete.

\section{Cylinders}

If $\varphi = 0$, then the metric~\eqref{eq g phi} is the cylinder metric $B_N + {\rm d}x^2$. We show that the cusp contribution $\eta^0_g\big(D_N^+, a'\big)$ then equals Donnelly's $g$-delocalised version of the Atiyah--Patodi--Singer $\eta$-invariant, for all $a'>a$. The computation in this subsection is a spectral counterpart of the geometric computation in~\cite[Section~4]{HW21a}.

We start by recalling~\cite[Proposition~5.1]{HW21a}.
\begin{Proposition}\label{prop vanish}
Let $(\lambda_j)_{j=1}^{\infty}$ and $(a_j)_{j=1}^{\infty}$ be sequences in $\R$ such that $|\lambda_1|>0$, and $|\lambda_j| \leq |\lambda_{j+1}|$ for all $j$, and such that
there are $c_1,c_2, c_3, c_4>0$ such that for all $j$,
\begin{equation*} 
|\lambda_j| \geq c_1 j^{c_2},\qquad |a_j| \leq c_3 j^{c_4}.
\end{equation*}
Then for all $a'>0$,
\begin{equation*}
\int_0^{\infty} \sum_{j=1}^{\infty} \sgn(\lambda_j) a_j \frac{{\rm e}^{-\lambda_j^2 s} {\rm e}^{-a'^2/s}}{\sqrt{s}} \bigg(\frac{a'}{s} - |\lambda_j| \bigg) \, {\rm d}s = 0.
\end{equation*}
\end{Proposition}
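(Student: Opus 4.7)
\bigskip
\noindent\textbf{Proof proposal.}
The plan is to show that the integrand, for \emph{each} fixed $j$, integrates to zero on $(0,\infty)$, and then to justify the interchange of sum and integral using the growth hypotheses on $|\lambda_j|$ and $|a_j|$.

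\medskip
\noindent\textbf{Step 1: each term vanishes individually.}
Fix $j$, write $\lambda=|\lambda_j|>0$ and split the integral as
\[
\int_0^\infty \frac{{\rm e}^{-\lambda^2 s-a'^2/s}}{\sqrt{s}}\Bigl(\frac{a'}{s}-\lambda\Bigr)\,{\rm d}s
 = a'\!\int_0^\infty \frac{{\rm e}^{-\lambda^2 s-a'^2/s}}{s^{3/2}}\,{\rm d}s
 - \lambda\!\int_0^\infty \frac{{\rm e}^{-\lambda^2 s-a'^2/s}}{s^{1/2}}\,{\rm d}s.
\]
The second integral is evaluated by $s=u^2$, reducing it to $2\int_0^\infty {\rm e}^{-\lambda^2 u^2-a'^2/u^2}\,{\rm d}u=\frac{\sqrt{\pi}}{\lambda}{\rm e}^{-2a'\lambda}$. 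The first integral is reduced to the second by $s=a'^2/t$, giving $\frac{\sqrt{\pi}}{a'}{\rm e}^{-2a'\lambda}$. Hence
\[
\int_0^\infty \frac{{\rm e}^{-\lambda^2 s-a'^2/s}}{\sqrt{s}}\Bigl(\frac{a'}{s}-\lambda\Bigr)\,{\rm d}s
 = a'\cdot\frac{\sqrt{\pi}}{a'}{\rm e}^{-2a'\lambda}-\lambda\cdot\frac{\sqrt{\pi}}{\lambda}{\rm e}^{-2a'\lambda}=0.
\]

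\medskip
\noindent\textbf{Step 2: interchange of sum and integral.}
Using the evaluations from Step 1 with $(a'/s)$ and $\lambda$ separated (each in its own nonnegative integral), we have
\[
\int_0^\infty \frac{{\rm e}^{-\lambda_j^2 s-a'^2/s}}{\sqrt{s}}\cdot\frac{a'}{s}\,{\rm d}s = \sqrt{\pi}\,{\rm e}^{-2a'|\lambda_j|},
\qquad
\int_0^\infty \frac{{\rm e}^{-\lambda_j^2 s-a'^2/s}}{\sqrt{s}}\cdot|\lambda_j|\,{\rm d}s = \sqrt{\pi}\,{\rm e}^{-2a'|\lambda_j|},
\]
so the absolute integrand for the $j$th summand is bounded by
\[
|a_j|\cdot\frac{{\rm e}^{-\lambda_j^2 s-a'^2/s}}{\sqrt{s}}\Bigl(\frac{a'}{s}+|\lambda_j|\Bigr),
\]
whose integral over $(0,\infty)$ equals $2\sqrt{\pi}\,|a_j|\,{\rm e}^{-2a'|\lambda_j|}\le 2\sqrt{\pi}\,c_3 j^{c_4}{\rm e}^{-2a'c_1 j^{c_2}}$. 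Summing over $j$ gives a convergent series because the polynomial factor $j^{c_4}$ is dominated by the exponential decay ${\rm e}^{-2a'c_1 j^{c_2}}$, using $a'>0$, $c_1,c_2>0$. Fubini's theorem then applies and the whole double sum/integral equals $\sum_j \sgn(\lambda_j)a_j\cdot 0 = 0$.

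\medskip
\noindent\textbf{Main obstacle.}
The substantive step is Step~1, which hinges on recognising the two Gaussian-type integrals and observing that the prefactors $a'$ and $\lambda$ are exactly those needed to make the two pieces cancel. The interchange in Step~2 is routine once the growth conditions on $(\lambda_j)$ and $(a_j)$ are combined with the uniform exponential estimate $\mathrm{e}^{-2a'|\lambda_j|}$.
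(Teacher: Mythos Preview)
Your proof is correct. Both the termwise vanishing via the classical Gaussian integral $\int_0^\infty e^{-Au^2-B/u^2}\,du=\tfrac{1}{2}\sqrt{\pi/A}\,e^{-2\sqrt{AB}}$ and the Fubini justification via the bound $2\sqrt{\pi}\,|a_j|\,e^{-2a'|\lambda_j|}$ are sound.

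Note that the paper itself does not prove this proposition: it is merely recalled from the companion paper \cite[Proposition~5.1]{HW21a}, so there is no ``paper's own proof'' to compare against here. Your argument therefore supplies a self-contained proof where the present paper gives none, and it is essentially the natural one: recognise the integrand as an exact cancellation of two known Laplace-type integrals, then control the tail of the sum by the exponential decay $e^{-2a'|\lambda_j|}$ combined with the polynomial growth hypotheses on $|\lambda_j|$ and $|a_j|$.
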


For a function $f \in L^1(\R)$, we write
\[
\check{f}(x) := \int_{\R} {\rm e}^{{\rm i}x\zeta} f(\zeta)\, {\rm d}\zeta
\]
for its inverse Fourier transform (up to a possible power of $2\pi$).
\begin{Lemma}\label{lem IFT}
Let $f \in L^1(\R)$, and $\alpha,\beta \in \R$.
If $f$ is even, then
\[
\int_0^{\infty}\sin(\alpha\mu)\sin(\beta\mu)f(\mu)\,{\rm d}\mu=\frac14 \bigl( -\check{f}(\alpha+\beta)+\check{f}(\alpha-\beta) \bigr).
\]
If $f$ is odd, then
\[
\int_0^{\infty}\sin(\alpha\mu)\cos(\beta\mu)f(\mu)\,{\rm d}\mu=\frac1{4{\rm i}}\bigl( \check{f}(\alpha+\beta)+\check{f}(\alpha-\beta)\bigr).
\]
\end{Lemma}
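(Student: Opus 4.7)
The plan is to reduce each identity to product-to-sum trigonometric identities, followed by standard parity observations about the inverse Fourier transform $\check{f}$.

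First I would apply the standard identity $\sin(\alpha\mu)\sin(\beta\mu) = \tfrac{1}{2}\bigl(\cos((\alpha-\beta)\mu) - \cos((\alpha+\beta)\mu)\bigr)$ in the even case, and $\sin(\alpha\mu)\cos(\beta\mu) = \tfrac{1}{2}\bigl(\sin((\alpha+\beta)\mu) + \sin((\alpha-\beta)\mu)\bigr)$ in the odd case. This reduces the problem to evaluating the half-line integrals $\int_0^{\infty} \cos(x\mu) f(\mu)\,d\mu$ (for $f$ even) and $\int_0^{\infty} \sin(x\mu) f(\mu)\,d\mu$ (for $f$ odd) at $x = \alpha \pm \beta$.

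Next, observe that $\check{f}(x) = \int_\R \cos(x\mu) f(\mu)\,d\mu + {\rm i}\int_\R \sin(x\mu) f(\mu)\,d\mu$. If $f$ is even, then $\sin(x\mu) f(\mu)$ is odd in $\mu$ so its integral over $\R$ vanishes, and $\cos(x\mu)f(\mu)$ is even so $\int_0^{\infty} \cos(x\mu) f(\mu)\,d\mu = \tfrac{1}{2}\check{f}(x)$. Dually, if $f$ is odd, then the cosine term drops out and $\int_0^{\infty}\sin(x\mu)f(\mu)\,d\mu = \tfrac{1}{2{\rm i}}\check{f}(x)$. Substituting these back into the product-to-sum expressions immediately yields both stated formulas, with the correct overall factors of $1/4$ and $1/(4{\rm i})$.

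There is no real obstacle; the only point to be careful about is bookkeeping of signs and of the factor ${\rm i}$ in the odd case, and to ensure that the half-line integrals converge so that the splitting is legitimate, which is guaranteed since $f \in L^1(\R)$ implies that $\cos(x\mu)f(\mu)$ and $\sin(x\mu)f(\mu)$ are integrable on $[0,\infty)$ by absolute integrability.
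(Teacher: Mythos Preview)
Your proof is correct and is exactly the natural argument: product-to-sum identities followed by the parity reduction of $\check f$ to a half-line cosine or sine transform. The paper itself states this lemma without proof, treating it as an elementary identity, so there is nothing further to compare.
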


\begin{Lemma}\label{lem rho cyl}
If $\varphi = 0$, then the spectral measure ${\rm d}\rho^{|\lambda|, +}$ in~\eqref{eq cusp cpt} equals
\[
{\rm d}\rho^{|\lambda|, +}(\nu)
=\begin{cases}\dfrac{1}{\pi}\sqrt{\nu-\lambda^2}\,{\rm d}\nu &\text{if\quad $\nu\ge\lambda^2$},
\\
0 & \text{if\quad $\nu<\lambda^2$}. \end{cases}
\]
\end{Lemma}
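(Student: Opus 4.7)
The plan is to apply Proposition~\ref{prop rho Titch} directly, since when $\varphi = 0$ the potential $q_\lambda^+$ becomes constant and the solutions of the eigenvalue equation are explicit trigonometric/exponential functions.

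First, I would evaluate the potential in the cylinder case. From the definition \eqref{eq def q lambda}, when $\varphi = 0$ we have $\varphi' = 0$ and $e^{-2\varphi} = 1$, so $q_\lambda^+(y) = \lambda^2$ is constant. Consequently $\Delta_\lambda^+ = -\frac{d^2}{dy^2} + \lambda^2$ on $[0,\infty)$. For $\nu \in \mathbb{C}$, writing $\mu = \mu(\nu) := \sqrt{\nu - \lambda^2}$ with the branch chosen so that $\operatorname{Im}(\mu) > 0$ when $\nu$ lies in the upper half-plane, the two distinguished solutions from Proposition~\ref{prop rho Titch} are
\begin{equation*}
\theta_1(y,\nu) = \frac{\sin(\mu y)}{\mu}, \qquad \theta_2(y,\nu) = -\cos(\mu y).
\end{equation*}

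Next I would compute the Weyl--Titchmarsh function. For $\nu$ in the upper half-plane, the unique (up to scalar) $L^2$ solution at infinity is $e^{i\mu y}$, since $\operatorname{Im}(\mu) > 0$ forces exponential decay. Expanding
\begin{equation*}
\theta_2(y,\nu) + f(\nu) \theta_1(y,\nu) = e^{i\mu y}\left(-\tfrac{1}{2} + \tfrac{f(\nu)}{2i\mu}\right) + e^{-i\mu y}\left(-\tfrac{1}{2} - \tfrac{f(\nu)}{2i\mu}\right),
\end{equation*}
I would kill the growing exponential by demanding the coefficient of $e^{-i\mu y}$ vanish, giving $f(\nu) = -i\mu = -i\sqrt{\nu - \lambda^2}$. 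One checks that $\operatorname{Im}(f(\nu)) < 0$ on the upper half-plane as required.

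Finally, I would plug into formula \eqref{eq expr rho}. For $\nu' > \lambda^2$, as $\delta \downarrow 0$ we have $\sqrt{\nu' + i\delta - \lambda^2} \to \sqrt{\nu' - \lambda^2} \in \mathbb{R}_{>0}$, so $-\operatorname{Im}(f(\nu' + i\delta)) \to \sqrt{\nu' - \lambda^2}$. For $\nu' < \lambda^2$, the limit is $\sqrt{\nu' + i\delta - \lambda^2} \to i\sqrt{\lambda^2 - \nu'}$, which makes $f$ real in the limit, so $-\operatorname{Im}(f(\nu' + i\delta)) \to 0$. Integrating yields
\begin{equation*}
\rho^{|\lambda|,+}(\nu) = \frac{1}{\pi}\int_{\lambda^2}^{\nu} \sqrt{\nu' - \lambda^2}\,d\nu' \quad \text{for } \nu \geq \lambda^2,
\end{equation*}
and $0$ otherwise, giving the claimed density.

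The only delicate point is the consistent choice of branch for the square root and checking that the interchange of $\operatorname{Im}$ with the limit $\delta \downarrow 0$ causes no trouble at the threshold $\nu' = \lambda^2$; this is routine since the integrand is continuous and bounded on compact subsets away from that point, with an integrable square-root singularity at $\nu' = \lambda^2$.
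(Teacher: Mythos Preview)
Your proof is correct and follows essentially the same approach as the paper's: both apply Proposition~\ref{prop rho Titch} with the explicit solutions $\theta_1(y,\nu)=\mu^{-1}\sin(\mu y)$ and $\theta_2(y,\nu)=-\cos(\mu y)$, identify $f(\nu)=-{\rm i}\mu$ by requiring $\theta_2+f\theta_1\in L^2$, and then compute the boundary limit of $-\Imag f$. Your derivation of $f$ via the exponential decomposition is slightly more explicit than the paper's, and you additionally spell out why $q_\lambda^+\equiv\lambda^2$ when $\varphi=0$, but the method is the same.
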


\begin{proof}The proof is analogous to the computation in~\cite[Section~4.1]{Titchmarsh62} in the Neumann case.

With notation as in Proposition~\ref{prop rho Titch}, we now have
\[
\theta_1(x, \nu)= \frac{1}{\mu}\sin(\mu x),\qquad
\theta_2(x, \nu)= - \cos(\mu x),
\]
with $\mu := \sqrt{\nu - \lambda^2}$. If $\nu$ has positive imaginary part, then we choose the square root with positive real and imaginary parts. Then
 $f(\nu) = -i\mu$ has negative imaginary part, and
 \[
 \theta_2(x, \nu) + f(\nu) \theta_1(x, \nu) = -{\rm e}^{{\rm i}\mu x}
 \]
 defines a function in $L^2([0,\infty))$ if $\Imag(\nu)>0$.
 With our choice of square roots, we have for all $\nu \in \R$,
 \[
 \lim_{\delta \downarrow 0} -\Imag(f(\nu+{\rm i}\delta)) =
 \begin{cases}
 \mu & \text{if\quad $\nu \geq \lambda^2$},\\
 0 & \text{if\quad$\nu < \lambda^2$}.
 \end{cases}
 \]
 This implies the claim via
 Proposition~\ref{prop rho Titch}.
\end{proof}

The \emph{$g$-delocalised $\eta$-invariant} of $D_N^+$ \cite{Donnelly, HWW, Lott99} is
\[
\eta_g\big(D_N^+\big) = \frac{1}{\sqrt{\pi}}\int_0^{\infty} \Tr\big(g D_N^+ {\rm e}^{-s(D_N^+)^2}\big) \frac{1}{\sqrt{s}}\, {\rm d}s.
\]
If $g = e$, this equals the classical $\eta$-invariant of $D_N^+$.
\begin{Proposition}\label{prop cusp contr cylinder}
If $\varphi = 0$, then for all $a'>a$
\[
\eta^{0}_g\big(D_N^+, a'\big)=\eta_g\big(D_N^+\big).
\]
\end{Proposition}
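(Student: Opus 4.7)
The strategy is to substitute $\varphi = 0$ directly into Definition~\ref{def cusp cpt}, use the explicit eigenfunctions and spectral measure of $\Delta_{|\lambda|}^+ = -d^2/dy^2 + \lambda^2$ from Lemma~\ref{lem rho cyl}, evaluate the resulting $\nu$-integral in closed form via elementary Gaussian Fourier integrals, and finally split the remaining $s$-integral into two pieces: one that vanishes by Proposition~\ref{prop vanish}, and one that is manifestly the heat-kernel representation of the delocalised $\eta$-invariant.

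First I would set $\tilde a := a' - a$, note that $\xi(x) = x - a$ and $p\varphi = \varphi = \varphi' = 0$, and observe that for $\nu \ge \lambda^2$ the Dirichlet eigenfunction is $\theta_\nu^{|\lambda|,+}(y) = \mu^{-1}\sin(\mu y)$ with $\mu = \sqrt{\nu - \lambda^2}$, while the spectral measure from Lemma~\ref{lem rho cyl} gives $d\rho^{|\lambda|,+}(\nu) = \pi^{-1}\mu\, d\nu$ on $[\lambda^2,\infty)$. Substituting $\nu = \lambda^2 + \mu^2$ and using $2\sin\mu\tilde a\cos\mu\tilde a = \sin(2\mu\tilde a)$ and $2\sin^2\mu\tilde a = 1-\cos(2\mu\tilde a)$ converts the inner integral in \eqref{eq cusp cpt} into
\[
\frac{1}{\pi}e^{-s\lambda^2}\int_0^{\infty} e^{-s\mu^2}\bigl(\mu\sin(2\mu\tilde a) + |\lambda|(1 - \cos(2\mu\tilde a))\bigr)\, d\mu.
\]
The two standard Gaussian integrals $\int_0^\infty e^{-s\mu^2}\cos(2\mu\tilde a)\,d\mu = \tfrac{\sqrt\pi}{2\sqrt s}e^{-\tilde a^2/s}$ and (by differentiation in $\tilde a$) $\int_0^\infty e^{-s\mu^2}\mu\sin(2\mu\tilde a)\,d\mu = \tfrac{\tilde a\sqrt\pi}{2s\sqrt s}e^{-\tilde a^2/s}$ reduce this to
\[
\frac{e^{-s\lambda^2}}{2\sqrt{\pi s}}\Bigl[\tfrac{\tilde a}{s}e^{-\tilde a^2/s} + |\lambda|\bigl(1 - e^{-\tilde a^2/s}\bigr)\Bigr].
\]

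Then, inserting this back and including the prefactor $2$, with $a_\lambda := \tr(g|_{\ker(D_N^+ - \lambda)})$ and using $\sgn(\lambda)|\lambda| = \lambda$, I would split $\eta^0_g(D_N^+, a')$ as
\begin{align*}
& \int_0^\infty \sum_{\lambda \in \spec(D_N^+)} \sgn(\lambda)\, a_\lambda\, \frac{e^{-s\lambda^2}e^{-\tilde a^2/s}}{\sqrt{\pi s}}\Bigl(\tfrac{\tilde a}{s} - |\lambda|\Bigr)\, ds \\
&\qquad + \frac{1}{\sqrt{\pi}}\int_0^\infty \sum_{\lambda \in \spec(D_N^+)} \lambda\, a_\lambda\, \frac{e^{-s\lambda^2}}{\sqrt{s}}\, ds.
\end{align*}
The first summand is exactly of the form handled by Proposition~\ref{prop vanish} (with $\lambda_j$ the eigenvalues of $D_N^+$ and $a_j = a_\lambda$; Weyl growth of $|\lambda_j|$ and polynomial boundedness of $|a_\lambda|$ hold for the Dirac operator $D_N^+$ on the compact $N$), so it vanishes. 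The second summand is $\pi^{-1/2}\int_0^\infty \Tr(g D_N^+ e^{-s(D_N^+)^2})s^{-1/2}\,ds$, which is $\eta_g(D_N^+)$ by definition.

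The computation is essentially routine once the spectral resolution of $\Delta_{|\lambda|}^+$ is made explicit; the only mildly delicate point is organising the two pieces so that Proposition~\ref{prop vanish} applies cleanly and one sees the $\tilde a$-dependence cancel, leaving an expression independent of $a'$. This is where I would be most careful to track signs in passing from $\nu$ back to $\mu$ and in using $\sgn(\lambda)|\lambda|=\lambda$, but no new ideas beyond Lemma~\ref{lem rho cyl} and Proposition~\ref{prop vanish} are needed.
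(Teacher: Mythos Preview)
Your proof is correct and follows essentially the same route as the paper: both substitute the explicit Dirichlet eigenfunctions and the spectral measure from Lemma~\ref{lem rho cyl}, pass from $\nu$ to $\mu=\sqrt{\nu-\lambda^2}$, evaluate the resulting Gaussian integrals, and split the outcome into the $\eta$-integral plus a remainder killed by Proposition~\ref{prop vanish}. The only cosmetic difference is that the paper packages the Gaussian computations via the inverse-Fourier identities of Lemma~\ref{lem IFT} applied to $\sin^2(\mu a'')$ and $\sin(\mu a'')\cos(\mu a'')\mu$, whereas you first pass to $\sin(2\mu\tilde a)$ and $1-\cos(2\mu\tilde a)$ and quote the standard formulas directly.
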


\begin{proof}We apply the definition~\eqref{eq cusp cpt} of cusp contributions with $\varphi = 0$.
We look for solutions of $-\theta''+\lambda^2=\nu\theta $ satisfying
\[
\theta(0,\nu)=0,\qquad \theta'(0,\nu)=1,
\]
 and find the eigenfunctions $\theta^{|\lambda|,+}_{\nu}(y) =\frac{1}{\sqrt{\nu-\lambda^2}}\sin\big(\sqrt{\nu-\lambda^2}y\big)$.
 Let $a'>a$, and set $a'':= {a'-a = \xi(a')}$. Then
 by Lemma~\ref{lem rho cyl},~\eqref{eq cusp cpt} becomes
\begin{align}
\eta^{0}_g\big(D_N^+, a'\big) ={}&\frac{2}{\pi} \int_{0}^{\infty}
\sum_{\lambda \in \spec(D_N^+)} \sgn(\lambda)\tr\big(g|_{\ker(D_N^+ - \lambda)}\big)
\int_{\lambda^2}^{\infty}
{\rm e}^{-s\nu}
\frac{\sin\big(\sqrt{\nu-\lambda^2}a''\big)}{\sqrt{\nu-\lambda^2}}\nonumber
\\
 &\times\bigg( \cos\big(\sqrt{\nu-\lambda^2}a''\big) +|\lambda| \frac{\sin\big(\sqrt{\nu-\lambda^2}a''\big)}{\sqrt{\nu-\lambda^2}}
 \bigg)\sqrt{\nu-\lambda^2}\, {\rm d}\nu \, {\rm d}s.
 \label{eq cusp contr cyl 1}
 \end{align}

The change of variables $\mu=\sqrt{\nu-\lambda^2}$ and ${\rm d}\nu=2\sqrt{\nu-\lambda^2}\,{\rm d}\mu$ reduces it to
\begin{align*}
\begin{split}
\eta^{0}_g\big(D_N^+, a'\big) ={}&\frac{4}{\pi} \int_{0}^{\infty}
\sum_{\lambda \in \spec(D_N^+)} \sgn(\lambda)\tr\big(g|_{\ker(D_N^+ - \lambda)}\big)
\\
&\times\int_{0}^{\infty}{\rm e}^{-s(\mu^2+\lambda^2)}\sin(\mu a'')
 \big( \mu\cos(\mu a'') +|\lambda| \sin(\mu a'') \big)\,{\rm d}\mu \, {\rm d}s.
 \end{split}
\end{align*}

Applying Lemma~\ref{lem IFT} and using $f(\mu)={\rm e}^{-s\mu^2}$, so $\check f(x)=\sqrt{\frac{\pi}{s}}{\rm e}^{-\frac{x^2}{4s}}$ and $(\mu \mapsto \mu f(\mu))^{\vee}=\frac{1}{i}(\check{f})'$, we have
\begin{align*}
&\int_0^{\infty}\sin^2(\mu a''){\rm e}^{-s\mu^2}\, {\rm d}\mu =\frac{\sqrt{\pi}}{4\sqrt{s}}\big(1-{\rm e}^{-\frac{a''^2}{s}}\big),
\\
&\int_0^{\infty}\sin(\mu a'')\cos(\mu a'')\mu {\rm e}^{-s\mu^2}\, {\rm d}\mu=\frac{\sqrt{\pi} a''}{4s^{\frac32}}{\rm e}^{-\frac{a''^2}{s}}.
\end{align*}

Therefore,
\begin{align}
\eta^{0}_g\big(D_N^+, a'\big)={}&\frac{1}{\sqrt{\pi}}\! \int_{0}^{\infty}\!\!\!\!\!
\sum_{\lambda \in \spec(D_N^+)}\!\!\! \sgn(\lambda)\tr\big(g|_{\ker(D_N^+ - \lambda)}\big)
{\rm e}^{-s\lambda^2}\!
 \bigg( \frac{a''}{s^{\frac32}}{\rm e}^{-\frac{a''^2}{s}}+\frac{|\lambda|}{\sqrt{ s}}-\frac{|\lambda|}{\sqrt{ s}}{\rm e}^{-\frac{a''^2}{s}} \bigg) \, {\rm d}s\nonumber
 \\
={}& \frac{1}{\sqrt{\pi}}\int_{0}^{\infty}
\sum_{\lambda\in\spec(D_N^+)} \tr\big(g|_{\ker(D_N^+ - \lambda)}\big) \frac{{\rm e}^{-s\lambda^2}\lambda}{\sqrt{s}}\, {\rm d}s \nonumber
\\
& -\frac{1}{\sqrt{\pi}} \int_0^{\infty} \sum_{\lambda\in\spec(D_N^+)}\sgn(\lambda)\tr\big(g|_{\ker(D_N^+ - \lambda)}\big)\frac{{\rm e}^{-s\lambda^2}{\rm e}^{-\frac{a''^2}{s}}}{\sqrt{s}}\bigg(\frac{a''}{s}-|\lambda|\bigg)\, {\rm d}s. \!\!\label{eq eta cylinder}
\end{align}
The first term equals $\eta_g\big(D_N^+\big)$. For the second term, we
use Proposition~\ref{prop vanish},
and take $\lambda_j$ to be the $j$th eigenvalue of $D_N^+$ (ordered by absolute values), and $a_j := \tr\big(g|_{\ker(D_N^+ - \lambda_j)}\big)$.
Then Weyl's law for $D_N^+$ shows that $\lambda_j$ has the growth behaviour assumed in the proposition. And
\[
|a_j| \leq \dim\big(\ker\big(D_N^+ - \lambda_j\big)\big)
\]
grows at most polynomially by Weyl's law.
Hence Proposition~\ref{prop vanish} applies, and implies that the second term in~\eqref{eq eta cylinder} is zero.
\end{proof}

\begin{Remark}\label{rem cyl zero at a}
By Proposition~\ref{prop cusp contr cylinder}, the cusp contribution $\eta^{0}_g\big(D_N^+, a'\big)$ is independent of $a'>a$ in this case. Furthermore, we see directly from~\eqref{eq cusp contr cyl 1} that a version of $\eta^{0}_g\big(D_N^+, a'\big)$ with $a'$ replaced by $a$ equals zero. This illustrates the fact that the limit on the right-hand side of~\eqref{eq index cusps 2} does not equal the expression~\eqref{eq cusp cpt} with $a'$ replaced by~$a$.
\end{Remark}

As a consequence of Theorem~\ref{thm index cusps} and Proposition~\ref{prop cusp contr cylinder}, we obtain Donnelly's equivariant APS index theorem \cite{Donnelly} for the index of $D|_Z$ with APS boundary conditions at $N$:
\[
\ind_G^{\APS}(D|_Z)(g) = \int_{Z^g} \AS_g(D) -\frac{1}{2}\eta_g\big(D_N^+\big).
\]
Indeed, if $D_N^+$ is invertible, then the left-hand side equals $\ind_G(D)(g)$. In general, one replaces Theorem~\ref{thm index cusps} by Theorem~\ref{thm index non invtble} below.

\section[Non-invertible D\_N]{Non-invertible $\boldsymbol{D_N}$}\label{sec DN not invtble}\label{sec DN not inv}

In this section, we do not assume that $D_N$ is invertible. Because $N$ is compact, $D_N^+$ has discrete spectrum. Let $\varepsilon>0$ be such that
\begin{equation} \label{eq prop eps}
\spec\big(D_N^+\big) \cap (-2\varepsilon, 2\varepsilon) \subset \{0\}.
\end{equation}
Let $w \in C^{\infty}(M)$ be a function such that for all $x\geq a$ and all $n \in N$,
\begin{equation} \label{eq prop w}
w(n, x) = x.
\end{equation}
Consider the operator
\begin{equation} \label{eq def Depsw}
D^{\varepsilon w} := {\rm e}^{-\varepsilon w} D {\rm e}^{\varepsilon w}.
\end{equation}
This operator
 equals~\eqref{eq Dphi} on $C$, with $D_N^+$ replaced by the invertible operator $D_N^+ +\varepsilon$. Therefore, much of the proof of Theorem~\ref{thm index cusps} applies to $D^{\varepsilon w}$, apart from the fact that this operator and $D_N^+ + \varepsilon$ are not Dirac operators of the form~\eqref{eq Dirac}. This affects the limits as $t \downarrow 0$ of heat operators associated to these operators.

In the proof of Theorem~\ref{thm index general}, given in \cite{HW21a}, heat kernel asymptotics were used that may not apply to $D_N^+ + \varepsilon$. We therefore start from a version of this theorem where the hard cutoff between $Z^g \cup \big(N^g \times (a,a']\big)$ and the contribution from infinity $A_g(D_C, a')$
 is replaced by a smooth cutoff function. Let $\psi \in C^{\infty}(M)$ be such that
 \begin{equation} \label{eq prop psi}
\psi|_Z \equiv 1,\qquad \psi|_{N \times [a+1, \infty)}\equiv 0.
 \end{equation}
 For $t>0$, define
 \[
 A_g^t(D_C, \psi) := \int_0^{\infty}\!\!\!
\int_N \int_a^{\infty} \tr\big(g \lambda_s^{P}\big(g^{-1}n, x; n, x\big)\big) \psi'(x) f_1(x)\, {\rm d}x\, {\rm d}n\, {\rm d}s,
 \]
 and
 \begin{align*} 
\eta^{\varphi, t}_g(D_N^+, \psi) =&-2 \int_{t}^{\infty}
\int_{a}^{\infty} \psi' ( x){\rm e}^{-p\varphi(x)}
\sum_{\lambda \in \spec(D_N^+)}\sgn(\lambda) \tr\big(g|_{\ker(D_N^+ - \lambda)}\big)
\\
&\times\!\int_{\R}\!{\rm e}^{-s\nu}
\theta^{|\lambda|, +}_{\nu} (\xi(x))
 \big( \big(\theta^{|\lambda|, +}_{\nu} \big)'(\xi(x)) +|\lambda| {\rm e}^{-\varphi(x)} \theta^{|\lambda|, +}_{\nu} (\xi(x))
 \big)\,{\rm d}\rho^{|\lambda|, +}(\nu)\, {\rm d}x \, {\rm d}s.
 \end{align*}

We use the following standard regularisation method.
\begin{Definition}
For a function $f(t)$ that has an asymptotic expansion in $t$ as $t \downarrow 0$, the \emph{regularised limit} $\LIM_{t \downarrow 0}f(t)$ is
the coefficient of $t^0$ in such an asymptotic expansion.

The \emph{regularised $g$-delocalised $\varphi$-cusp contribution} associated to $D_N^+ + \varepsilon$ and $\psi$ is
\[
\eta_g^{\varphi, \reg}\big(D_N^+ + \varepsilon, \psi\big) := \LIM_{t \downarrow 0} \eta_g^{\varphi, t}\big(D_N^+ + \varepsilon, \psi\big).
\]
\end{Definition}

 The condition~\eqref{eq bound phi' b} with $b$ replaced by $\varepsilon$ implies that $D^{\varepsilon w}$ is Fredholm, via Lemma~\ref{lem bd Delta}.
Theorem 4.14 in \cite{HW21a} then becomes
 \begin{gather} \label{eq index general not inv}
 \ind_G(D^{\varepsilon w})(g) = \LIM_{t\downarrow 0} \bigl(
 \Tr\big(g\circ {\rm e}^{-t\tilde D^-_{\varepsilon w} \tilde D^+_{\varepsilon w}}\psi\big)\! - \Tr\big(g\circ {\rm e}^{-t\tilde D^+_{\varepsilon w} \tilde D^-_{\varepsilon w}}\psi\big)\! + A_{g}^t\big(D_C^{\varepsilon w}, \psi\big) \bigr).\!\!\!\!
 \end{gather}
 Here $\tilde D_{\varepsilon w}$ is an extension of $D^{\varepsilon w}$ to a closed manifold containing $M \setminus (N \times (a+1, \infty))$,
and we used the fact that the left-hand side is independent of $t$. The proof of~\eqref{eq index general not inv} is a direct analogy of the proof of Theorem 4.14 in \cite{HW21a}.
 \begin{Example}
 Suppose that $N$ is the circle, and $D_N^+ = {\rm i}\frac{{\rm d}}{{\rm d}\theta}$. Then $D_N^+ + 1/2$ is invertible and has symmetric spectrum.
 So
 \[
 A_{e}^t\big(D_C^{\varepsilon w}, \psi\big) = \eta^{\varphi, t}_{e}\big(D_N^++1/2, \psi\big) = 0
 \]
 for all $t$. Hence
~\eqref{eq index general not inv} becomes
 \[
 \ind\big(D^{w/2}\big) = \LIM_{t\downarrow 0} \bigl(
 \Tr\big({\rm e}^{-t\tilde D^-_{ w/2} \tilde D^+_{w/2}}\psi\big) - \Tr\big({\rm e}^{-t\tilde D^+_{w/2} \tilde D^-_{w/2}}\psi\big) \bigr).
 \]
 \end{Example}

\begin{Theorem}\label{thm index non invtble}
Suppose that $M$ has weakly admissible $\varphi$-cusps, where~\eqref{eq bound phi' b} holds on an interval $(a', \infty)$, with $a'>a$, and $b$ replaced by $\varepsilon$.
Then $D^{\varepsilon w}$ is Fredholm, and its index is independent of $\varepsilon$ and $w$ with the properties mentioned.
And for $\psi \in C^{\infty}(M)$ satisfying~\eqref{eq prop psi},
 \begin{equation} \label{eq index non invtble}
\ind_G\big(D^{\varepsilon w}\big)(g) = \int_{M^g} \psi|_{M^g} \AS_g(D)
-\frac{1}{2} \lim_{\varepsilon \downarrow 0}\eta^{\varphi, \reg}_g\big(D_N^+ +\varepsilon, \psi\big).
\end{equation}
\end{Theorem}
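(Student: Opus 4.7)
The strategy is to mimic the proof of Theorem \ref{thm index cusps}, but starting from the smoothly-cut-off index formula \eqref{eq index general not inv} and extracting local/spectral pieces via regularised limits. First I would verify Fredholmness. Since $w|_C(n,x)=x$ and $e^{\varepsilon x}$ commutes with $\sigma$, $D_N$, and $\varphi'$, the only effect of the conjugation on the cusp is $e^{-\varepsilon x}\partial_x e^{\varepsilon x}=\partial_x+\varepsilon$, so
\[
D^{\varepsilon w}|_C = {\rm e}^{-\varphi}\sigma\bigl(\tfrac{\partial}{\partial x}+(D_N+\varepsilon)+\tfrac{p-1}{2}\varphi'\bigr).
\]
Thus $D^{\varepsilon w}$ has the cusp structure of Proposition \ref{prop Dphi} with $D_N$ replaced by the invertible operator $D_N+\varepsilon$, which satisfies $(D_N+\varepsilon)^2\geq \varepsilon^2$. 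Under the hypothesis $|\varphi'|\leq\varepsilon-\alpha$, the proof of Lemma \ref{lem bd Delta} then gives a positive lower bound for $\|D^{\varepsilon w}s\|_{L^2}$ on sections supported far out on the cusp, so $D^{\varepsilon w}$ is invertible at infinity and hence Fredholm by \cite{Anghel93b, Gromov83}.

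Next I would apply $\LIM_{t\downarrow 0}$ to \eqref{eq index general not inv}, noting the LHS is independent of $t$. For the heat trace terms, choose $w$ to be $G$-invariant. Since $\tilde D_{\varepsilon w}=e^{-\varepsilon w}\tilde D e^{\varepsilon w}$, and $e^{\pm\varepsilon w}$ is a multiplication operator that commutes with $g$ and with $\psi$, cyclicity of the trace yields
\[
\Tr\bigl(g\,{\rm e}^{-t\tilde D^{\mp}_{\varepsilon w}\tilde D^{\pm}_{\varepsilon w}}\psi\bigr)=\Tr\bigl(g\,{\rm e}^{-t\tilde D^{\mp}\tilde D^{\pm}}\psi\bigr).
\]
Because $\supp\psi\subset M\setminus(N\times(a+1,\infty))$ sits inside the closed manifold extending $M$ on which $\tilde D$ agrees with the Dirac operator $D$, the equivariant local index theorem (see, e.g., \cite[Theorem~6.16]{BGV}) gives
\[
\LIM_{t\downarrow 0}\bigl(\Tr\bigl(g\,{\rm e}^{-t\tilde D^-\tilde D^+}\psi\bigr)-\Tr\bigl(g\,{\rm e}^{-t\tilde D^+\tilde D^-}\psi\bigr)\bigr)=\int_{M^g}\psi|_{M^g}\AS_g(D).
\]

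For the contribution from infinity, I would repeat the computations of Lemmas \ref{lem lambda lambda tilde}, \ref{lem kernel APS}, and \ref{lem kernel on diagonal} verbatim, with $D_N^+$ replaced throughout by the invertible operator $D_N^++\varepsilon$. The kernel identity established in Lemma \ref{lem kernel on diagonal} is pointwise in the radial coordinate; integrating it against $\psi'(x)f_1(x)\,{\rm d}x$ rather than against a fiber delta gives, exactly as in \eqref{eq comp Ag},
\[
A_g^t(D_C^{\varepsilon w},\psi)=-\tfrac{1}{2}\eta^{\varphi,t}_g(D_N^++\varepsilon,\psi).
\]
Applying $\LIM_{t\downarrow 0}$ produces $-\tfrac{1}{2}\eta^{\varphi,\reg}_g(D_N^++\varepsilon,\psi)$, and combining with the previous paragraph gives, for each valid $\varepsilon$,
\[
\ind_G(D^{\varepsilon w})(g)=\int_{M^g}\psi|_{M^g}\AS_g(D)-\tfrac{1}{2}\eta^{\varphi,\reg}_g(D_N^++\varepsilon,\psi).
\]

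Finally, independence of $\varepsilon$ and $w$ follows from a homotopy argument: any two admissible pairs $(\varepsilon_0,w_0),(\varepsilon_1,w_1)$ are joined by the convex interpolation $(\varepsilon_t,w_t)$, which remains admissible, producing a norm-continuous family of Fredholm operators whose $G$-index in $R(G)$ is locally constant and hence constant. Since the AS-integral on the right is also manifestly independent of $\varepsilon$, the cusp term $\eta^{\varphi,\reg}_g(D_N^++\varepsilon,\psi)$ is independent of $\varepsilon$, so $\lim_{\varepsilon\downarrow 0}$ exists trivially and \eqref{eq index non invtble} follows. The main obstacle is the term-by-term matching $A^t_g(D_C^{\varepsilon w},\psi)=-\tfrac{1}{2}\eta^{\varphi,t}_g(D_N^++\varepsilon,\psi)$ of \emph{truncated} (in $t$) quantities, which is needed before taking regularised limits; this requires that the identification of Schwartz kernels in Lemmas \ref{lem kernel APS}--\ref{lem kernel on diagonal} be uniform in $s$ and holds integrated against $\psi'(x)\,dx$ rather than at a single fiber, which the proofs of those lemmas support without modification.
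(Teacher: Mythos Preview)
Your argument is essentially correct and parallels the paper's proof for Fredholmness, for the identification $A_g^t(D_C^{\varepsilon w},\psi)=-\tfrac12\eta_g^{\varphi,t}(D_N^++\varepsilon,\psi)$, and for independence of the index from $\varepsilon$ and $w$. The one genuine divergence is in how you treat the heat-trace terms in~\eqref{eq index general not inv}.

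The paper does \emph{not} use cyclicity. Instead it keeps~\eqref{eq index non invtble 2} for each fixed $\varepsilon>0$, observes that the heat-kernel coefficients of $e^{-t\tilde D_{\varepsilon w}^2}$ depend continuously on $\varepsilon$, and appeals to standard equivariant heat asymptotics only at $\varepsilon=0$, where $\tilde D_0$ is an honest Dirac operator. Taking $\lim_{\varepsilon\downarrow 0}$ of both sides then produces the $\AS$-integral and forces convergence of the $\eta$-term. Your route instead fixes $\varepsilon>0$, chooses the closed-manifold extension in the conjugate form $\tilde D_{\varepsilon w}=e^{-\varepsilon\tilde w}\tilde D\,e^{\varepsilon\tilde w}$, and uses cyclicity of the trace (valid since $e^{\pm\varepsilon\tilde w}$ is bounded on the compact double and commutes with $g$ and $\psi$) to reduce the heat traces to those of $\tilde D$ for \emph{every} $t>0$. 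This is cleaner and yields the slightly sharper statement that $\eta_g^{\varphi,\reg}(D_N^++\varepsilon,\psi)$ is actually constant in $\varepsilon$, not merely convergent as $\varepsilon\downarrow 0$. The price is that you must know~\eqref{eq index general not inv} holds for this particular non-self-adjoint extension; since $D^{\varepsilon w}$ itself is already non-self-adjoint and the paper asserts the proof carries over by direct analogy with \cite[Theorem~4.14]{HW21a}, this is a reasonable assumption, but it is worth flagging explicitly. The paper's continuity argument avoids this by never needing to identify the heat trace for $\varepsilon>0$ exactly.
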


\begin{proof}
As noted above~\eqref{eq index general not inv}, the operator $D^{\varepsilon w}$ is Fredholm if $M$ has weakly admissible $\varphi$-cusps with respect to the spectral gap $2\varepsilon$ of this operator.
If $\varepsilon'>0$ has the same property~\eqref{eq prop eps} as $\varepsilon$, then
\[
D^{\varepsilon w} - D^{\varepsilon' w} = (\varepsilon' - \varepsilon)c({\rm d}w).
\]
This is a bounded vector bundle endomorphism, so the linear path between $D^{\varepsilon w}$ and $D^{\varepsilon' w}$ is continuous. And all operators on this path are Fredholm, so $\ind(D^{\varepsilon w}) =\ind(D^{\varepsilon' w})$.

If $w' \in C^{\infty}(M)$ has the same property~\eqref{eq prop w} as $w$, then
\[
D^{\varepsilon w} - D^{\varepsilon w'} = - \varepsilon c({\rm d}(w-w')).
\]
Because $w-w' = 0$ outside a compact set, $D^{\varepsilon w'}$ is a compact perturbation of $D^{\varepsilon w}$, when viewed as acting on the relevant Sobolev space. Hence $\ind(D^{\varepsilon w}) =\ind(D^{\varepsilon w'})$. We find that $\ind(D^{\varepsilon w})$ is independent of $\varepsilon$ and $w$.

By the arguments leading up to~\eqref{eq comp Ag}, with integrals over $s$ replaced by integrals from $t>0$ to $\infty$, we have
\[
A_{g}^t\big(D_C^{\varepsilon w}, \psi\big) = -\frac{1}{2}\eta_g^{\varphi, t}\big(D_N^+ + \varepsilon, \psi\big).
\]
Hence~\eqref{eq index general not inv} becomes
\begin{align}
 \ind_G\big(D^{\varepsilon w}\big)(g) ={}& \LIM_{t\downarrow 0} \bigl(
 \Tr\big(g\circ {\rm e}^{-t\tilde D^-_{\varepsilon w} \tilde D^+_{\varepsilon w}}\psi\big) - \Tr\big(g\circ {\rm e}^{-t\tilde D^+_{\varepsilon w} \tilde D^-_{\varepsilon w}}\psi\big) \bigr) \nonumber
 \\
 &-\frac{1}{2}\eta_g^{\varphi, \reg}\big(D_N^+ + \varepsilon, \psi\big).
\label{eq index non invtble 2}
\end{align}

The coefficients of the heat operator ${\rm e}^{-s \tilde D_{\varepsilon w}^2}$ are continuous in $\varepsilon$. And standard heat kernel asymptotics and localisation apply to ${\rm e}^{-s \tilde D_0^2}$, the analogous operator with $\varepsilon = 0$. These imply that
\[
\lim_{\varepsilon \downarrow 0} \LIM_{t\downarrow 0} \bigl(
 \Tr\big(g\circ {\rm e}^{-t\tilde D^-_{\varepsilon w} \tilde D^+_{\varepsilon w}}\psi\big) - \Tr\big(g\circ {\rm e}^{-t\tilde D^+_{\varepsilon w} \tilde D^-_{\varepsilon w}}\psi\big) \bigr) = \int_{M^g} \psi|_{M^g} \AS_g(D).
\]
Because the left-hand side of~\eqref{eq index non invtble 2} is independent of $\varepsilon$, the claim follows.
\end{proof}

\begin{Remark}The arguments of \cite[Section~4.5]{HW21a} showing that $\psi$ may be replaced by a~step function involve an actual limit $t\downarrow 0$, not the regularised limit $\LIM_{t\downarrow 0}$. For this reason, it is not immediately obvious to us if a version of Theorem~\ref{thm index non invtble} with $\psi$ replaced by a step function is true.
\end{Remark}

\begin{Example}
If $\varphi$ is the zero function, then the left-hand side of~\eqref{eq index non invtble} is the equivariant index of the restriction of $D$ to $M \setminus C$, with Atiyah--Patodi--Singer boundary conditions at $\partial C$. Then for all suitable $\psi$,
a slight modification of the proof of Proposition~\ref{prop cusp contr cylinder} shows that $\eta^{\varphi, \reg}_g\big(D_N^+ +\varepsilon, \psi\big)$ is the regularised $g$-delocalised $\eta$-invariant of $D_N^++\varepsilon$.
Hence
\[
\lim_{\varepsilon \downarrow 0}\eta^{\varphi, \reg}_g\big(D_N^+ +\varepsilon, \psi\big) = \tr\big(g|_{\ker(D_N^+)}\big) + \eta_g^{\reg}\big(D_N^+\big).
\]
This fact is standard; see, for example,~\cite[Lemma~6.7]{HWW}.
\end{Example}

\appendix

\section{Conformal transformations of Dirac operators}\label{app Dirac}

Let $M$ be a manifold of dimension $p$. Let $B_0$ be a Riemannian metric on $M$. Let $S \to M$ be a Clifford module for this metric, with Clifford action $c_0\colon TM \to \End(S)$. Fix a Clifford connection $\nabla^0$ on $S$ preserving a Hermitian metric on $S$, and let $D_0 = c_0\circ \nabla^0$ be the associated Dirac operator. Let $\varphi \in C^{\infty}(M)$, and consider the Riemannian metric $B_{\varphi} := {\rm e}^{2\varphi}B_0$. We denote the gradient operator for $B_0$ by $\grad$.
\begin{Proposition}\label{prop Dirac psi}
There are a Clifford action $c_{\varphi}$ by $TM$ on $S$, with respect to $B_{\varphi}$, and a~Clifford connection $\nabla^{\varphi}$ on $S$, with respect to $c_{\varphi}$ and $B_{\varphi}$, such that the associated Dirac operator $D_{\varphi} = c_{\varphi} \circ \nabla^{\varphi}$ equals
\begin{equation} \label{eq Dpsi}
D_{\varphi} = {\rm e}^{-\varphi}\bigg( D_0 + \frac{p-1}{2} c_0(\grad\varphi) \bigg) = {\rm e}^{-\frac{p+1}{2}\varphi} D_0 {\rm e}^{\frac{p-1}{2}\varphi}.
\end{equation}
\end{Proposition}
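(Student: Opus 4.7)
\medskip

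\noindent\textbf{Proof proposal.}
The plan is to construct $c_\varphi$ and $\nabla^\varphi$ explicitly in terms of the given data, verify the Clifford compatibility, and then compute the Dirac operator in a local orthonormal frame.

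First, I would define the Clifford action by
\[
c_\varphi(v) := {\rm e}^{\varphi} c_0(v), \qquad v \in TM.
\]
Since $\|v\|_{B_\varphi}^2 = {\rm e}^{2\varphi}\|v\|_{B_0}^2$, a direct squaring gives $c_\varphi(v)^2 = -\|v\|_{B_\varphi}^2\,\Id$, so $c_\varphi$ is a Clifford action for $B_\varphi$. Next I would define
\[
\nabla^\varphi_v := \nabla^0_v - \tfrac{1}{2}\, c_0(v)\, c_0(\grad \varphi) - \tfrac{1}{2}\, v(\varphi)\,\Id,
\]
where $\grad$ is the $B_0$-gradient. The scalar correction $-\tfrac12 v(\varphi)$ is chosen so that $\nabla^\varphi$ preserves a suitably rescaled Hermitian metric on $S$; this does not affect the Clifford compatibility check because scalars commute with $c_0(w)$.

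The main step is verifying Clifford compatibility. Recall the standard formula for the Levi-Civita connection of the conformally rescaled metric:
\[
\nabla^{B_\varphi}_v w = \nabla^{B_0}_v w + v(\varphi) w + w(\varphi) v - B_0(v,w)\,\grad \varphi.
\]
Expanding $[\nabla^\varphi_v,\, c_\varphi(w)]$, the term $[\nabla^0_v, {\rm e}^{\varphi} c_0(w)]$ produces $v(\varphi)\,c_\varphi(w) + {\rm e}^{\varphi} c_0(\nabla^{B_0}_v w)$. The remaining task is to show
\[
[c_0(v)c_0(\grad\varphi),\, c_0(w)] = -2 w(\varphi) c_0(v) + 2 B_0(v,w) c_0(\grad \varphi),
\]
which follows from two applications of the Clifford relation $c_0(u)c_0(w)+c_0(w)c_0(u) = -2B_0(u,w)\Id$. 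Combining these identities shows that $[\nabla^\varphi_v, c_\varphi(w)] = c_\varphi(\nabla^{B_\varphi}_v w)$, i.e., $\nabla^\varphi$ is a Clifford connection for $(c_\varphi, B_\varphi)$.

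Finally, I would compute $D_\varphi = c_\varphi \circ \nabla^\varphi$ in a local $B_0$-orthonormal frame $\{e_1,\dots,e_p\}$, so that $\{{\rm e}^{-\varphi}e_i\}$ is $B_\varphi$-orthonormal. A short calculation gives
\[
D_\varphi = {\rm e}^{-\varphi}\bigg(D_0 + \sum_i c_0(e_i) A(e_i)\bigg),
\]
where $A(v)$ denotes the correction term in $\nabla^\varphi$. Using $\sum_i c_0(e_i)^2 = -p\,\Id$ and $\sum_i e_i(\varphi) c_0(e_i) = c_0(\grad\varphi)$ yields
\[
\sum_i c_0(e_i) A(e_i) = \tfrac{p}{2} c_0(\grad \varphi) - \tfrac{1}{2} c_0(\grad \varphi) = \tfrac{p-1}{2} c_0(\grad \varphi),
\]
which establishes the first equality in~\eqref{eq Dpsi}. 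The second equality is a one-line check: applying Leibniz to $D_0({\rm e}^{\frac{p-1}{2}\varphi} s)$ produces exactly the factor ${\rm e}^{\frac{p-1}{2}\varphi}\bigl(D_0 + \tfrac{p-1}{2} c_0(\grad\varphi)\bigr) s$, and multiplying by ${\rm e}^{-\frac{p+1}{2}\varphi}$ gives the claim.

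The only nontrivial step is the Clifford compatibility computation; once the correct ansatz for $A(v)$ is written down, everything reduces to the basic Clifford identity and to a standard Koszul-type formula for the conformal change of the Levi-Civita connection, both of which are purely algebraic.
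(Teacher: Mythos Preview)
Your proof is correct and follows essentially the same approach as the paper: define $c_\varphi = {\rm e}^\varphi c_0$, add an explicit correction term to $\nabla^0$, verify Clifford compatibility via the conformal Koszul formula together with the commutator identity $[c_0(u)c_0(v),c_0(w)] = -2B_0(v,w)c_0(u)+2B_0(u,w)c_0(v)$, and then compute $D_\varphi$ in the rescaled frame $\{{\rm e}^{-\varphi}e_i\}$. Your correction $A(v)=-\tfrac12 c_0(v)c_0(\grad\varphi)-\tfrac12 v(\varphi)$ is exactly the paper's $A^{\varphi,1/2}_v=\tfrac12 c_0(\grad\varphi)c_0(v)+\tfrac12 B_0(\grad\varphi,v)$, rewritten using the Clifford relation; one minor inaccuracy is your remark that the scalar term makes $\nabla^\varphi$ preserve a \emph{rescaled} metric---in fact this choice makes it preserve the \emph{original} Hermitian metric on $S$, though this plays no role in the proposition as stated.
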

\begin{Remark}
The operator $c_0(\grad\varphi)$ in~\eqref{eq Dpsi} is fibrewise antisymmetric. But the operator~$D_{\varphi}$ is symmetric with respect to the $L^2$-inner product defined with the Riemannian density associated to $B_{\varphi}$. This follows, for example, from Proposition~\ref{prop Dirac psi} and the usual argument why Dirac operators are symmetric.
\end{Remark}

We write $\XX(M)$ for the space of smooth vector fields on $M$.
Let $\nabla^{TM, 0}$ be the Levi-Civita connection for $B_0$, and let $\nabla^{TM, \varphi}$ be the Levi-Civita connection for $B_{\varphi}$.
\begin{Lemma}\label{lem LC psi}
For all $v,w \in \XX(M)$,
\[
\nabla^{TM, \varphi}_v w = \nabla^{TM, 0}_v w + v(\varphi) w + w(\varphi)v -B_0(v,w)\grad \varphi.
\]
\end{Lemma}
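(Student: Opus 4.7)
The plan is to apply the Koszul formula to both Levi-Civita connections and compare. Recall that for any Riemannian metric $B$ on $M$, the Levi-Civita connection is characterised by
\[
2B\big(\nabla^{TM}_v w, u\big) = v\,B(w,u) + w\,B(v,u) - u\,B(v,w) + B([v,w],u) - B([v,u],w) - B([w,u],v)
\]
for all $v,w,u \in \XX(M)$. I would first write this formula for $B_{\varphi} = {\rm e}^{2\varphi} B_0$ and use the Leibniz rule to expand each of the three differentiated terms, e.g.\ $v\,B_{\varphi}(w,u) = 2v(\varphi) {\rm e}^{2\varphi} B_0(w,u) + {\rm e}^{2\varphi} v\,B_0(w,u)$, while the three bracket terms on the right simply acquire an overall factor ${\rm e}^{2\varphi}$. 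Collecting the ${\rm e}^{2\varphi}$ contributions and invoking the Koszul formula for $B_0$ would yield
\[
2B_{\varphi}\big(\nabla^{TM, \varphi}_v w, u\big) = 2{\rm e}^{2\varphi} B_0\big(\nabla^{TM, 0}_v w, u\big) + 2{\rm e}^{2\varphi}\bigl( v(\varphi) B_0(w,u) + w(\varphi) B_0(v,u) - u(\varphi) B_0(v,w) \bigr).
\]

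Next I would rewrite the correction term on the right-hand side as a single $B_{\varphi}$-inner product with $u$. Since ${\rm e}^{2\varphi} B_0 = B_{\varphi}$, the first two correction terms are already $B_{\varphi}(v(\varphi)w,u)$ and $B_{\varphi}(w(\varphi)v,u)$. For the third term, I would use the definition of the gradient with respect to $B_0$, namely $u(\varphi) = B_0(\grad\varphi, u)$, so that
\[
-{\rm e}^{2\varphi} u(\varphi) B_0(v,w) = -B_0(v,w)\, B_{\varphi}(\grad\varphi, u) = B_{\varphi}\bigl( -B_0(v,w)\grad\varphi,\, u\bigr).
\]
Dividing by $2$ and using non-degeneracy of $B_{\varphi}$ (and the arbitrariness of $u$), the identity of the lemma falls out.

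This is essentially a routine manipulation with the Koszul formula; the only mildly subtle point is the bookkeeping for the gradient in the last term, where the factor ${\rm e}^{2\varphi}$ must be absorbed into $B_{\varphi}$ so that $\grad\varphi$ (taken with respect to $B_0$, as the lemma's notation indicates) appears without any conformal factor. Once that conversion is made, non-degeneracy of $B_{\varphi}$ concludes the argument.
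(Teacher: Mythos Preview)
Your proof is correct and is exactly the approach the paper indicates: the paper's proof consists of the single sentence ``This is a computation based on the Koszul formulas for $\nabla^{TM, \varphi}$ and $\nabla^{TM, 0}$,'' and you have simply carried out that computation in full.
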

\begin{proof}
This is a computation based on the
Koszul formulas for $\nabla^{TM, \varphi}$ and $\nabla^{TM, 0}$.
\end{proof}

Consider the Clifford action $c_{\varphi} := {\rm e}^{\varphi}c_0$ with respect to $B_{\varphi}$.
For $A \in \Omega^1(M; \End(S))$, consider the connection $\nabla^A := \nabla^0 + A$ on $S$. For $v \in \XX(M)$, let $A_v \in \End(S)$ be the pairing of $A$ and $v$.
\begin{Lemma}\label{lem B}
The connection $\nabla^A$ is a Clifford connection for $c_{\varphi}$ and $B_{\varphi}$ if and only if for all $v,w \in \XX(M)$,
\[
[A_v, c_0(w)] = w(\varphi) c_0(v) - B_0(v,w) c_0(\grad \varphi).
\]
\end{Lemma}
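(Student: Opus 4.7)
The plan is to unwind the definition of ``Clifford connection for $c_\varphi$ and $B_\varphi$'' on both sides and compare. Recall that this condition means
\[
[\nabla^A_v, c_\varphi(w)] = c_\varphi\bigl(\nabla^{TM, \varphi}_v w\bigr)
\]
for all $v, w \in \XX(M)$. I would expand each side explicitly using $c_\varphi = {\rm e}^{\varphi} c_0$, the fact that $\nabla^0$ is a Clifford connection for $c_0$ and $B_0$, and Lemma~\ref{lem LC psi} relating $\nabla^{TM, \varphi}$ to $\nabla^{TM, 0}$.

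For the left-hand side, the Leibniz rule gives
\[
[\nabla^0_v + A_v, {\rm e}^{\varphi} c_0(w)] = v(\varphi){\rm e}^{\varphi} c_0(w) + {\rm e}^{\varphi} c_0\bigl(\nabla^{TM,0}_v w\bigr) + {\rm e}^{\varphi}[A_v, c_0(w)],
\]
where I used the Clifford property of $\nabla^0$ to rewrite the $[\nabla^0_v, c_0(w)]$ term, and the fact that $A_v$ commutes with scalar multiplication by ${\rm e}^{\varphi}$. For the right-hand side, Lemma~\ref{lem LC psi} gives
\[
{\rm e}^{\varphi} c_0\bigl(\nabla^{TM, \varphi}_v w\bigr) = {\rm e}^{\varphi} c_0\bigl(\nabla^{TM,0}_v w\bigr) + {\rm e}^{\varphi} v(\varphi) c_0(w) + {\rm e}^{\varphi} w(\varphi) c_0(v) - {\rm e}^{\varphi} B_0(v,w) c_0(\grad \varphi).
\]

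Setting the two expressions equal, the terms ${\rm e}^{\varphi} c_0(\nabla^{TM,0}_v w)$ and $v(\varphi){\rm e}^{\varphi} c_0(w)$ cancel from both sides. Dividing through by ${\rm e}^{\varphi}>0$, the remaining equation is exactly
\[
[A_v, c_0(w)] = w(\varphi) c_0(v) - B_0(v,w) c_0(\grad \varphi),
\]
so the two conditions are equivalent. There is no real obstacle: the proof is a direct computation, and the only thing to be careful about is keeping track of which Clifford action and which Levi-Civita connection is being used at each step, and making sure that the terms produced by the conformal rescaling of $c_0$ and by the correction in Lemma~\ref{lem LC psi} match up as claimed.
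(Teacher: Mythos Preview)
Your proof is correct and follows essentially the same approach as the paper's: both expand the Clifford connection condition $[\nabla^A_v, c_\varphi(w)] = c_\varphi(\nabla^{TM,\varphi}_v w)$ using $c_\varphi = {\rm e}^{\varphi}c_0$, the Clifford property of $\nabla^0$, and Lemma~\ref{lem LC psi}, then cancel the common terms. The only cosmetic difference is that you apply $[\nabla^0_v, c_0(w)] = c_0(\nabla^{TM,0}_v w)$ immediately in the left-hand expansion, whereas the paper records this step separately after displaying both sides.
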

\begin{proof}
For all $v,w \in \XX(M)$,
\begin{equation} \label{eq B 1}
\big[\nabla^A_v, c_{\varphi}(w)\big] = {\rm e}^{\varphi}[\nabla_v, c_0(w)] + {\rm e}^{\varphi}[A_v, c_0(w)] + {\rm e}^{\varphi}v(\varphi) c_0(w).
\end{equation}
And by Lemma~\ref{lem LC psi},
\begin{equation} \label{eq B 2}
c_{\varphi}\big(\nabla^{TM, \varphi}_v w\big) = {\rm e}^{\varphi}c_0\big(\nabla^{TM, 0}_v w\big) +
{\rm e}^{\varphi} v(\varphi) c_0(w) + {\rm e}^{\varphi}w(\varphi) c_0(v) - {\rm e}^{\varphi} B_0(v,w) c_0(\grad \varphi).
\end{equation}
Because $\nabla^0$ is a Clifford connection for $c_0$ and $B_0$,~\eqref{eq B 1} and~\eqref{eq B 2} are equal if and only if
\begin{equation*}
{\rm e}^{\varphi}[A_v, c_0(w)] + {\rm e}^{\varphi}v(\varphi) c_0(w) = {\rm e}^{\varphi} v(\varphi) c_0(w) + {\rm e}^{\varphi}w(\varphi) c_0(v) - {\rm e}^{\varphi} B_0(v,w) c_0(\grad \varphi).\tag*{\qed}
\end{equation*}
\renewcommand{\qed}{}
\end{proof}

\begin{Lemma}\label{lem cuvw}
For all $u,v,w \in \XX(M)$,
\[
[c_0(u)c_0(v), c_0(w)] = -2B_0(v,w) c_0(u) + 2 B_0(u,w) c_0(v).
\]
\end{Lemma}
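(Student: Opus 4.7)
The identity is a routine consequence of the Clifford relations, so the plan is very short. By polarising the defining relation $c_0(v)^2 = -\|v\|^2\Id_S$ (which follows from the assumption on $c$ in the paper applied to $c_0$ via the rescaled metric $B_0$), one obtains the standard anticommutation relation
\[
c_0(v)c_0(w) + c_0(w)c_0(v) = -2B_0(v,w)\Id_S
\]
for all $v,w \in \XX(M)$. This is the only input needed.

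The plan is then to expand the commutator directly and apply this relation twice. First I would write
\[
[c_0(u)c_0(v), c_0(w)] = c_0(u)c_0(v)c_0(w) - c_0(w)c_0(u)c_0(v),
\]
and use the anticommutation identity on the pair $c_0(v)c_0(w)$ to rewrite the first term as $-2B_0(v,w)c_0(u) - c_0(u)c_0(w)c_0(v)$. Combining this with the second term gives
\[
[c_0(u)c_0(v), c_0(w)] = -2B_0(v,w)c_0(u) - \bigl(c_0(u)c_0(w) + c_0(w)c_0(u)\bigr)c_0(v).
\]
A second application of the anticommutation relation to the pair $c_0(u)c_0(w)$ replaces the bracketed expression by $-2B_0(u,w)\Id_S$, which yields the claim.

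There is no real obstacle here; the only thing to be slightly careful about is bookkeeping of the signs and the order of $u,v,w$, since swapping the roles of $u$ and $v$ would change the sign of the result. Everything else is a two-line manipulation inside the Clifford algebra fibre, with no geometric input beyond $c_0$ being a Clifford action for $B_0$.
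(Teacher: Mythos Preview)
Your argument is correct and is essentially the same as the paper's: the proof there simply says this is a straightforward computation using the Clifford relation $c_0(v_1)c_0(v_2) + c_0(v_2)c_0(v_1) = -2B_0(v_1,v_2)$, which is exactly the anticommutation identity you apply twice.
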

\begin{proof}
This is a straightforward computation, involving the equality
\[
c_0(v_1)c_0(v_2) + c_0(v_2) c_0(v_1) = -2B_0(v_1, v_2)
\]
for all $v_1, v_2 \in \XX(M)$.
\end{proof}

Let $f \in C^{\infty}(M)$, and
define $A^{\varphi, f} \in \Omega^1(M; \End(S))$ by
\[
A^{\varphi, f}_v := \frac{1}{2}c_0(\grad \varphi)c_0(v) + fB_0(\grad \varphi, v).
\]
We write $\nabla^{\varphi,f} := \nabla^{A^{\varphi,f}}$.
\begin{Lemma}\label{lem Clifford psi}
For all $f \in C^{\infty}(M)$,
the connection $\nabla^{\varphi, f}$ is a Clifford connection for $c_{\varphi}$ and $B_{\varphi}$.
\end{Lemma}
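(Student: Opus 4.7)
The plan is to verify the criterion from Lemma~\ref{lem B}: for all $v,w \in \XX(M)$,
\[
 \big[A^{\varphi,f}_v, c_0(w)\big] = w(\varphi) c_0(v) - B_0(v,w) c_0(\grad \varphi).
\]
Once this is established, Lemma~\ref{lem B} immediately yields that $\nabla^{\varphi,f}$ is a Clifford connection for $c_{\varphi}$ and $B_{\varphi}$.

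To do so, I would split $A^{\varphi,f}_v$ into its two summands. The second summand $fB_0(\grad\varphi,v)$ is a scalar (acting as a multiple of the identity on $S$), so it commutes with $c_0(w)$ and contributes nothing to the bracket. Thus the parameter $f$ plays no role in the verification, and the claim reduces to computing
\[
 \tfrac{1}{2}\big[c_0(\grad\varphi)c_0(v),\, c_0(w)\big].
\]

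For this bracket, I would apply Lemma~\ref{lem cuvw} with $u = \grad\varphi$, which gives
\[
 \big[c_0(\grad\varphi)c_0(v),\, c_0(w)\big] = -2 B_0(v,w)\, c_0(\grad\varphi) + 2 B_0(\grad\varphi, w)\, c_0(v).
\]
Using $B_0(\grad\varphi, w) = w(\varphi)$ and dividing by $2$ produces exactly the right-hand side required by Lemma~\ref{lem B}, completing the verification.

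There is no real obstacle here: the lemma is a direct algebraic consequence of the bracket identity in Lemma~\ref{lem cuvw} together with the fact that scalar-valued terms in $A^{\varphi,f}_v$ drop out of the commutator. The slight subtlety worth noting is simply that the freedom to add the term $fB_0(\grad\varphi,\cdot)$ reflects the fact that Clifford connections are unique only up to addition of a scalar-valued one-form, which is why the same proof works uniformly in $f$.
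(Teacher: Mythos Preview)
Your proof is correct and follows exactly the paper's approach: the paper's one-line proof simply states that Lemma~\ref{lem cuvw} implies $A^{\varphi,f}$ satisfies the condition in Lemma~\ref{lem B}, and you have spelled out precisely how this verification goes, including the observation that the scalar term $fB_0(\grad\varphi,v)$ drops out of the commutator.
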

\begin{proof}
Lemma~\ref{lem cuvw} implies that $A^{\varphi, f}$ satisfies the condition in Lemma~\ref{lem B}.
\end{proof}

\begin{Lemma}\label{lem pres metric}
The connection $\nabla^{\varphi,f}$ preserves the metric on $S$ if and only if $f|_{\supp(\grad \varphi)} = \frac{1}{2}$.
\end{Lemma}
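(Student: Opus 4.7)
The plan is to reduce the metric-preservation condition on $\nabla^{\varphi,f} = \nabla^0 + A^{\varphi,f}$ to a pointwise algebraic condition on the endomorphism $A^{\varphi,f}_v \in \End(S)$, and then to extract the constraint on $f$.

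First, since $\nabla^0$ already preserves the Hermitian metric on $S$ by assumption, $\nabla^{\varphi,f}$ preserves the metric if and only if $A^{\varphi,f}_v$ is skew-Hermitian for every $v \in \XX(M)$. I will then compute the self-adjoint part of $A^{\varphi,f}_v$ using two standard facts: (i) Clifford multiplication by a real tangent vector is skew-Hermitian, i.e., $c_0(w)^* = -c_0(w)$ for all $w \in \XX(M)$, and (ii) the real scalar $B_0(\grad\varphi, v)$ acts as a self-adjoint multiplication operator. Using (i), the adjoint of $c_0(\grad\varphi)c_0(v)$ is $c_0(v)c_0(\grad\varphi)$, so by the Clifford anticommutation relation
\[
c_0(\grad\varphi) c_0(v) + c_0(v) c_0(\grad\varphi) = -2 B_0(\grad\varphi, v),
\]
the self-adjoint part of $\tfrac{1}{2}c_0(\grad\varphi)c_0(v)$ is $-\tfrac{1}{2}B_0(\grad\varphi, v)$, and the commutator part $\tfrac{1}{4}[c_0(\grad\varphi), c_0(v)]$ is genuinely skew-Hermitian.

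Combining with the $fB_0(\grad\varphi, v)$ summand, one obtains
\[
A^{\varphi,f}_v + \big(A^{\varphi,f}_v\big)^* = (2f - 1)\, B_0(\grad\varphi, v).
\]
Hence $\nabla^{\varphi,f}$ preserves the metric on $S$ exactly when $(2f-1)B_0(\grad\varphi, v) = 0$ for every $v \in \XX(M)$, at every point $p \in M$.

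Finally, I interpret this pointwise. At a point $p$ with $\grad\varphi(p)\neq 0$, setting $v = \grad\varphi$ at $p$ gives $(2f(p) - 1)\|\grad\varphi(p)\|_{B_0}^2 = 0$, forcing $f(p) = \tfrac{1}{2}$. Conversely, if $f = \tfrac12$ on $\supp(\grad\varphi)$, then $(2f-1)B_0(\grad\varphi, v)$ vanishes on the closure of $\{\grad\varphi \neq 0\}$ (because $f = 1/2$ there) and vanishes on the complement (because $\grad\varphi$ does). By continuity this yields the equivalence in the lemma. I do not anticipate a real obstacle here; the only mild subtlety is handling the closure in the definition of $\supp$, which the continuity argument resolves automatically.
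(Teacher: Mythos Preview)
Your proof is correct and follows essentially the same route as the paper: reduce metric-preservation of $\nabla^{\varphi,f}=\nabla^0+A^{\varphi,f}$ to the condition that $A^{\varphi,f}_v$ be anti-Hermitian, then use $c_0(w)^*=-c_0(w)$ together with the Clifford anticommutation relation to obtain $\big(A^{\varphi,f}_v\big)^* = -A^{\varphi,f}_v + (2f-1)B_0(\grad\varphi,v)$. Your version is in fact slightly more complete than the paper's, which stops at this identity and leaves the pointwise interpretation of the support condition to the reader.
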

\begin{proof}
Because $\nabla^0$ preserves the metric on $S$, $\nabla^{\varphi,f}$ preserves the same metric if and only if~$A^{\varphi,f}_v$ is anti-Hermitian for any vector field $v$. And because $c_0(w)$ is anti-Hermitian for any vector field~$w$,
\begin{equation*}
\big(A^{\varphi,f}_v\big)^* = -A^{\varphi,f} + (2f-1)B_0(\grad \varphi, v).\tag*{\qed}
\end{equation*}
\renewcommand{\qed}{}
\end{proof}

\begin{proof}[Proof of Proposition~\ref{prop Dirac psi}]
Let $c_{\varphi}$ and $\nabla^{\varphi, \frac{1}{2}}$ as defined above, where $f \equiv 1/2$. Then $\nabla^{\varphi, \frac{1}{2}}$ is a Clifford connection and preserves the metric by Lemmas~\ref{lem Clifford psi} and~\ref{lem pres metric}.

Let $\{e_1, \dots, e_p\}$ be a local orthonormal frame for $TM$ with respect to $B_0$. Then the frame $\big\{{\rm e}^{-\varphi} e_1, \dots, {\rm e}^{-\varphi} e_p\big\}$ is a local orthonormal frame for $TM$ with respect to $B_{\varphi}$. So
\[
D_{\varphi} = \sum_{j=1}^p c_{\varphi}\big({\rm e}^{-\varphi}e_j\big) \nabla^{\varphi, \frac{1}{2}}_{{\rm e}^{-\varphi} e_j}
= {\rm e}^{-\varphi} \sum_{j=1}^p c_0(e_j) \nabla^0_{e_j}
+{\rm e}^{-\varphi} \sum_{j=1}^p c_0(e_j) A^{\varphi, \frac{1}{2}}_{e_j}.
\]
The first term on the right-hand side equals ${\rm e}^{-\varphi} D_0$, and the second term equals
\begin{gather*}
\frac{1}{2}{\rm e}^{-\varphi} \sum_{j=1}^p c_0(e_j) c_0(\grad \varphi) c_0(e_j)
+\frac{1}{2}{\rm e}^{-\varphi} \sum_{j=1}^p c_0(e_j) B_0(\grad \varphi,e_j)\\
 \qquad{} =
\frac{1}{2}{\rm e}^{-\varphi} \sum_{j=1}^p\bigl( c_0(\grad \varphi) -2 B_0(e_j, \grad \varphi)c_0(e_j)\bigr) + \frac{1}{2} c_0(\grad \varphi)\\
\qquad{}
=\frac{p-1}{2}{\rm e}^{-\varphi} c_0(\grad \varphi). \tag*{\qed}
\end{gather*}
\renewcommand{\qed}{}
\end{proof}

\subsection*{Acknowledgements}

We thank Mike Chen for a helpful discussion, and Christian B\"ar for pointing out a useful reference. We are grateful to the referees for several helpful comments and corrections. In particular, we thank the referee who pointed out an error in the previous version of \cite{HW21a}, on which the current paper builds, which has since been fixed.
PH is partially supported by the Australian Research Council, through Discovery Project DP200100729. HW is supported by NSFC-11801178 and Shanghai Rising-Star Program 19QA1403200.

\pdfbookmark[1]{References}{ref}
\LastPageEnding

\end{document}